\newtheorem{remark}{Remark}[section]
\newtheorem{algorithm}{Algorithm}[section]
\newtheorem{assumption}{Assumption}[section]
\begin{document}

\title{Adaptive Finite Element Approximations for Kohn-Sham Models
\thanks{{This work was partially
supported by  the Funds for Creative Research Groups of China under grant 11021101,
the National Basic Research Program of China under grant
2011CB309703, the National Science Foundation of China
under   grants 11101416 and 91330202, the National 863 Project of China
under grant 2012AA01A309, and the National Center for Mathematics and Interdisciplinary Sciences of Chinese Academy of Sciences.}}}

\author{Huajie Chen\thanks{Department of Mathematics, Technische Universit\"{a}t M\"{u}nchen, Germany ({chenh@ma.tum.de}).}
\and Xiaoying Dai\thanks{LSEC, Institute of Computational Mathematics and Scientific/Engineering Computing, Academy of
Mathematics and Systems Science, Chinese Academy of Sciences, China ({daixy@lsec.cc.ac.cn}).}
\and Xingao Gong\thanks{ Department of
Physics, Fudan University, Shanghai 200433, China
(xggong@fudan.edu.cn).}
\and  Lianhua He\thanks{LSEC, Institute of Computational Mathematics and Scientific/Engineering Computing, Academy of
Mathematics and Systems Science, Chinese Academy of Sciences, China ({helh@lsec.cc.ac.cn}).}
\and Aihui Zhou\thanks{LSEC, Institute of Computational Mathematics and Scientific/Engineering Computing,
Academy of Mathematics and Systems Science, Chinese Academy of Sciences, China ({azhou@lsec.cc.ac.cn}).}}
\date{}
\maketitle

\begin{abstract}
The Kohn-Sham model is a powerful,
widely used approach for computation of ground state electronic energies and densities
in chemistry, materials science, biology, and nanoscience. In this paper, we study  adaptive finite element
approximations for the Kohn-Sham  model.
Based on the residual type a posteriori error estimators proposed in this paper, we introduce
an adaptive finite element algorithm with
a quite general marking strategy and prove the convergence of the adaptive finite element approximations.
Using D{\" o}rfler's marking strategy, we then get
the convergence rate and quasi-optimal complexity.
We also carry out several typical numerical experiments that not only support our theory,
but also show the robustness and efficiency of the adaptive finite element computations in electronic structure calculations.
\end{abstract}\vskip 0.2cm

{\bf Keywords:}\quad
Kohn-Sham density functional theory, nonlinear eigenvalue problem,
adaptive finite element approximation, convergence, complexity.
\vskip 0.2cm

{\bf AMS subject classifications:}\quad 35Q55, 65N15, 65N25, 65N30, 81Q05.

\section{Introduction} \label{sec-introduction}
\setcounter{equation}{0}

The Kohn-Sham density functional model is a powerful,
widely used approach for computation  of ground state electronic energies and densities
in chemistry, materials science, biology, and nanosciences.
Consider a molecular system consisting of $M$ nuclei of charges
$\{Z_1,\cdots,Z_M\}$ located at the positions $\{{\bf R}_1,\cdots,{\bf R}_M\}$
and $N$ electrons in the non-relativistic and spin-unpolarized setting.
By density functional theorem (DFT)  \cite{hohenberg-kohn64,kohn-sham65}, the ground state solutions of the system may be obtained by solving the
 lowest $N$ eigenpairs of the following Kohn-Sham equation
\begin{eqnarray}\label{problem-eigen-intro}
\left\{ \begin{array}{rcl} \left(-\frac{1}{2}\Delta + V_{\rm ext}(x)+
\frac{1}{2} \int_{\mathbb{R}^3}
\frac{\rho(y)}{|x-y|}dy +  V_{\rm xc}(\rho)\right)\phi_i &=& \mu_i\phi_i\quad\mbox{in}~\mathbb{R}^3,
\quad i=1,2,\cdots, N,\\[1ex] \displaystyle
\int_{\mathbb{R}^3}\phi_i\phi_j &=& \delta_{ij},
\end{array} \right.
\end{eqnarray}
where $\displaystyle V_{\rm ext}(x)=-\sum_{k=1}^M\frac{Z_k}{|x-{\bf R}_k|}$ is
the electrostatic potential generated by the nuclei,
$\displaystyle \rho(x)=\sum_{i=1}^N|\phi_i(x)|^2$ is the electron density,
and $V_{\rm xc}(\rho)$ denotes the exchange-correlation potential.
%

Since the core electrons do not participate in the chemical binding and remain almost unchanged, a pseudopotential approximation is usually resorted to in practical computations of the Kohn-Sham equation, which is to replace the   Coulomb potential of the nucleus and the effects of the core electrons  by  an effective ionic potential acting on the valence electrons.
  Therefore, under the pseudopotential framework,  only valence electrons are involved.
The
pseudopotential consists of two terms: a local component $V_{\rm loc}$ (whose associated operator is the multiplication
by the function $V_{\rm loc}$) and a nonlocal component $V_{\rm nl}$
(an operator whose expression is given in Section \ref{sec-preliminaries}).
The resulted equation is still \eqref{problem-eigen-intro} but $V_{\rm ext} (x) = V_{\rm loc}(x) + V_{\rm nl}(x)$, $N$ now being  the number of valence electrons,  and  $\{\phi_i\}^N_{i=1}$  being  the set of the pseudo-orbitals of the valence electrons.

We understand that the Kohn-Sham approach achieves so far the best balance between accuracy and efficiency
among all the different formalisms of electronic structure theory,
and simulations of large-scale material systems with Kohn-Sham DFT are still computationally very demanding
(say, thousands of electrons or more). As a result,
efficient numerical algorithms that can be scalable on parallel computing platforms
are desirable to enable DFT calculations at larger scale and for more complex systems.
We see that real-space techniques and methods for electronic structure calculations have been
derived much attention from scientific and engineering computing communities and remarkably developed
during the last two decades, among which the finite element method possesses several significant advantages \cite{beck00,fang-gao-zhou12,pask01,pask-sterne05,torsti06,tsuchida-tsukada95}.
Although the finite element method employs more degrees of freedom than that of traditional methods like plane waves and Gaussians, it results in sparse algebraic eigenvalue problems and thus it is scalable on parallel computing platforms due to the strictly local basis functions, it is variational, and it is  friendly  to implement
 adaptive refinement approaches. Consequently, the computational accuracy and efficiency  of the finite element approximations can be well controlled.
%

We observe  that even in the pseudopotential setting,
the eigenfunctions of \eqref{problem-eigen-intro} still vary rapidly around nuclei or chemical bonds
\cite{beck00,dai-gong-yang-zhang-zhou11,gong-shen-zhang-zhou08}.
Hence it is also natural to apply adaptive finite element (AFE) approaches
to improve the approximation accuracy and reduce the computational cost.
Indeed, we see that AFE computations have been quite successfully used in solving
Kohn-Sham equations and electronic structure calculations.
Tsuchida and Tsukada  combined the finite element method with the adaptive curvilinear coordinate approach for electronic structure calculation of some molecules \cite{tsuchida-tsukada96,tsuchida-tsukada98};
Shen and Zhang introduced some adaptive tetrahedral finite element disretizations in their theses
\cite{shen-thesis05,zhang-thesis07} and calculated several typical molecular systems efficiently
\cite{gong-shen-zhang-zhou08,shen-zhou06,zhang-shen-zhou-gong08,zhang08};
Bylaska et.al used adaptive piecewise linear finite element method on completely unstructured simplex meshes to resolve
the rapid variation electronic wave functions around atomic nuclei \cite{bylaska09};
Dai et.al designed some parallel adaptive and localization based finite element algorithms for
typical quantum chemistry and nanometer material computations containing more than one thousand atoms
using tens of hundreds of processors on computer cluster \cite{dai-thesis08, dai-gong-yang-zhang-zhou11,dai-shen-zhou08,dai-zhou08};
Gavini et.al constructed a finite element mesh using unstructured coarse-graining technique
and computed materials systems \cite{motamarri12,suryanarayana-gavini-etal10};
Yang successfully scaled their AFE simulations to over 6000 CPU cores on the Tianhe-1A supercomputer in his thesis
\cite{yzhang-thesis11}. The AFE simulations carried out in this paper also show the robustness and efficiency
of the AFE computations in electronic structure calculations. We may refer to \cite{fattebert-hornung-wissink07,torsti06} and references cited therein for other interesting discussions on
 adaptive finite element method (AFEM).

We see that it is significant to understand the mechanism of AFE computations, analyze the AFE approximations of Kohn-Sham equations, and give a mathematical justification of the AFE algorithm.
We note that the AFE computations are based on some a posteriori error estimators and  there are
a little work concerning analysis of the a posteriori error estimators and convergence of AFE approximations for DFT.
In \cite{chen-he-zhou09,chen-he-zhou10}, the authors of this paper considered the nonlinear eigenvalue problems
derived from the orbital-free DFT and obtained the convergence and optimal complexity of the AFE algorithm.
We understand that the orbital-free DFT is viewed as a simplification of the Kohn-Sham DFT, in which only one eigenpair is involved.
In this paper, we shall propose and analyze two AFE algorithms for Kohn-Sham DFT calculations and study the associated convergence and quasi-optimal complexity.

Let us now give an   informal description of the main results of this paper. We propose and analyze two  AFE algorithms:
Algorithm \ref{algorithm-AFEM} and Algorithm \ref{algorithm-AFEM-con-rate}, which are based on the residual type
a posteriori error estimators.  We show the a posteriori error estimates (see Theorem \ref{theorem-posteriori}) and prove that
\begin{itemize}
\item
Under some reasonable assumptions, all limit points of the AFE approximations of the ground state solutions are ground state solutions
(see Theorem \ref{theorem-convergence-adaptive}).
\item
Under other reasonable assumptions, some eigenpairs (in particular, ground state solutions)
can be well approximated by AFE approximations with some convergence rate (see
Theorem \ref{theorem-convergence-rate}).
\end{itemize}
In addition, we also study quasi-optimal complexity of AFE approximations (see Theorem \ref{theorem-optimal-complexity}).

We mention that Algorithm \ref{algorithm-AFEM} and Algorithm \ref{algorithm-AFEM-con-rate} may be viewed as some
extensions of associated existing algorithms for linear elliptic partial differential
equations of second order and have been in fact used for years,  for  instance, in package RealSPACES
(Real Space Parallel Adaptive Calculation of Electronic Structure) of the State Key Laboratory of Scientific and Engineering Computing,
Chinese Academy of Sciences. As we see, the numerical analysis for AFE approximation has been also derived much attention from the mathematical community.
Since Babu{\v s}ka and Vogelius \cite{babuska-vogelius84} gave an analysis of an AFEM for linear
symmetric elliptic problems in one dimension, there has been much investigation on the convergence and complexity of AFEMs in
literature (see, e.g., \cite{binev-dehmen-devore04,cascon-kreuzer-nochetto-siebert08,dai-xu-zhou08,dorfler96,
garau-morin-zuppa09,stevenson07} and the references
cited therein).
In the context of the finite element approximations of linear eigenvalue problems,
in particular, we see that there are  a number of works concerning a posteriori error estimates  \cite{ becker-rannacher01, dai-he-zhou12, duran-padra-rodriguez03,heuveline-rannacher01,larson00,mao-shen-zhou06,verfurth96}, AFEM
convergence \cite{dai-xu-zhou08,garau-morin11,garau-morin-zuppa09,giani-graham09,he-zhou11} and complexity
\cite{dai-he-zhou12, dai-xu-zhou08, garau-morin11,he-zhou11}.

However, there are several crucial difficulties in numerical analysis of the Kohn-Sham equation: it is  a nonlinear eigenvalue problem whose eigenvalues may be degenerate, and a number of eigenpairs must be involved; the associated energy functional is  nonconvex  with respect to density $\rho$,  as a result, there is no uniqueness result for
the ground state solutions; the energy functional
is invariance under unitary transforms, which  also induces redundancy of the ground state solutions.
To handle these difficulties arising from the Kohn-Sham equations,
we shall present some sophisticated arguments and consider the convergence under the distance between solution sets;
investigate the convergence rate and optimal complexity under certain inf-sup assumption;
and exploit the relationship between the finite element nonlinear eigenvalue
approximations and the associated finite element boundary value approximations.
 Thanks to our previous works \cite{chen-gong-he-yang-zhou10, chen-he-zhou09, chen-he-zhou10, dai-he-zhou12, dai-xu-zhou08, he-zhou11, zhou04, zhou07}  where the perturbation argument was
introduced for analyzing  AFEM of eigenvalue problems and
the compact approach
was specialized for handling  the nonlinear effects, combining the  crucial technical results proposed also in this paper, we are then able to analyze our adaptive finite element
algorithms for Kohn-Sham equations, prove the convergence and get the complexity.

The rest of this paper is organized as follows. In Section \ref{sec-preliminaries},
we provide some preliminaries for Kohn-Sham DFT problem setting and residual type a posteriori error estimator based AFE methods.
We prove the convergence of AFE approximations in Section \ref{sec-adaptive} and
analyze the convergence rate and optimal complexity of an
AFE algorithm in Section \ref{sec-optimal}. In Section \ref{sec-numerical}, we present some numerical
experiments that support the theory. Finally, we give some concluding remarks.

\section{Preliminaries} \label{sec-preliminaries}
\setcounter{equation}{0}
Physically, the Kohn-Sham model is set in $\mathbb{R}^3$. However,
due to the exponential decay
of the ground state wavefunction of the Schr\"{o}dinger equation  (c.f., e.g., \cite{agmon81, yesrentant10}) and the fact that Kohn-Sham model is an approximation of Schr\"{o}dinger equation, $\mathbb{R}^3$ is
usually replaced by some polyhedral  domain $\Omega\subset\mathbb{R}^3$  in practical computations for Kohn-Sham equation.

For $\kappa\in \mathbb{R}^{N\times N}$, we denote its Frobenius norm by $|\kappa|$.
For $p\geq 1$ and $s\geq 0$, we denote by $W^{s,p}(\Omega)$ the standard Sobolev spaces
with the induced norm $\|\cdot\|_{s,p,\Omega}$ (see, e.g. \cite{adams75,ciarlet78}).
For $p=2$, we denote by $H^s(\Omega)=W^{s,2}(\Omega)$ with the norm $\|\cdot\|_{s,\Omega}=\|\cdot\|_{s,2,\Omega}$, and
$H^1_0(\Omega)=\{v\in H^1(\Omega): v\mid_{\partial\Omega}=0\}$, where $v\mid_{\partial\Omega}=0$ is understood in the sense of trace.
The space $H^{-1}(\Omega)$, the dual of $H^1_0(\Omega)$, will also be used.
Let $\mathcal{H}=(H_0^1(\Omega))^N$ be the Hilbert space with $H_1$ inner product
$$
(\Phi,\Psi)=\sum_{i=1}^{N}\int_{\Omega}\phi_i\psi_i\quad{\rm for}~\Phi=(\phi_1,\cdots,\phi_N),\Psi=(\psi_1,\cdots,\psi_N)\in\mathcal{H}.
$$
Let $\mathbb{Q}$ be a subspace with orthonormality constraints:
\begin{eqnarray*}\label{eq-class}
\mathbb{Q}=\{\Phi\in\mathcal{H}:\Phi^T\Phi=I^{N\times N}\},
\end{eqnarray*}
where $\displaystyle\Phi^T\Psi=\left( \int_{\Omega}\phi_i\psi_j \right)_{ij}\in\mathbb{R}^{N\times N}$.
For $\Phi\in\mathcal{H}$ and a subdomain $\omega\subset\Omega$, we
shall denote by $\displaystyle\rho_{\Phi}=\sum_{i=1}^N|\phi_i|^2$ and (sometimes abuse the notation for simplicity) by
\begin{eqnarray*}\label{def-norm-abuse}
\|\Phi\|_{s,\omega}=\left(\sum_{i=1}^{N}\|\phi_i\|_{s,\omega}^2\right)^{1/2},s=0,1;\quad
\|\Phi\|_{0,p,\omega}=\left(\sum_{i=1}^{N}\|\phi_i\|_{0,p,\omega}^p\right)^{1/p}, 1\le p\le 6.
\end{eqnarray*}
In our discussions, we shall use the following sets:
$$\mathcal{S}^{N\times N}=\{M\in\mathbb{R}^{N\times N}: M^T=M\},~~
\mathcal{A}^{N\times N}=\{M\in\mathbb{R}^{N\times N}: M^T=-M\}.
$$
For any $\Phi\in \mathbb{Q}$, we may decompose $\mathcal{H}$ into a direct sum of three subspaces (see, e.g., \cite{edelman98}):
\begin{eqnarray*}
\mathcal{H}=\mathcal{S}_\Phi\oplus\mathcal{A}_\Phi\oplus\mathcal{T}_{\Phi},
\end{eqnarray*}
 where $\mathcal{S}_{\Phi}=\Phi\mathcal{S}^{N\times N}$,
$\mathcal{A}_{\Phi}=\Phi\mathcal{A}^{N\times N}$, and $
\mathcal{T}_{\Phi}=\left\{\Psi\in\mathcal{H}:\Psi^T\Phi=0\in\mathbb{R}^{N\times N}\right\}. $

For convenience, the symbol $\lesssim$ will be used throughout this paper, and $A\lesssim B$ means that
$A\leq C B$ for some constant $C$ that is independent of mesh parameters.
We use $\mathscr{P}(p,(c_1,c_2))$ to denote a class of functions satisfying some growth conditions:
\begin{align*}\label{pol-notation}
\mathscr{P}(p,(c_1,c_2))=~\big\{ f:~ \exists ~a_1,a_2\in \mathbb{R} \mbox{ such that }
~c_1 t^{p}+a_1 \leq f(t) \leq c_2 t^{p}+a_2 \quad \forall t\ge 0 \big\}
\end{align*}
with $c_1\in \mathbb{R}$ and $c_2, p\in [0,\infty)$.

\subsection{Problem setting}\label{problem-setting}
Consider the following general form of Kohn-Sham energy functional
\begin{eqnarray}\label{eq-energy}
E(\Phi) &=& \int_{\Omega} \left(\frac{1}{2}\sum_{i=1}^{N}|\nabla\phi_i|^2 + V_{\rm loc} \rho_{\Phi} +
\sum_{i=1}^N\phi_iV_{\rm nl}\phi_i + e_{\rm xc}(\rho_{\Phi})\right) \nonumber\\
 && +\frac{1}{2} D(\rho_{\Phi},\rho_{\Phi})
\end{eqnarray}
for $\Phi=(\phi_1,\phi_2,\cdots,\phi_N)\in\mathcal{H}$, which includes the cases of  Coulomb  potentials and pseudopotential approximations. For the Coulomb potential setting, $V_{\rm loc} = -\sum_{k=1}^M\frac{Z_k}{|x-{\bf R}_k|}$ and $V_{\rm nl} = 0$. While for the pseudopotential approximations, $V_{\rm loc}$ is the local part of pseudopotential and  $V_{\rm nl}$ is a
nonlocal pseudopotential operator (see, e.g., \cite{martin04}) given
by
$$
V_{\rm nl}\phi=\sum_{j=1}^n(\phi,\zeta_j)\zeta_j
$$
with $\zeta_j\in L^2(\Omega) (j=1,2,\cdots,n)$, $n\in\mathbb{N}$. $D(\rho_{\Phi},\rho_{\Phi})$ is the electron-electron Coulomb energy defined by
$$
D(f,g)=\int_{\Omega}f(g*r^{-1}) = \int_{\Omega}\int_{\Omega}f(x)g(y)\frac{1}{|x-y|}dxdy,
$$
and $e_{\rm xc}(t)$ is some real function over $[0,\infty)$.
%
In our  analysis, we require $V_{\rm loc}$  belongs to $L^2(\Omega)$.
We  point out that $V_{\rm loc}\in L^2(\Omega)$ is a very mild condition, which is satisfied by both the Coulomb potential
$V_{\rm ext}(x)=-\sum_{k=1}^M\frac{Z_k}{|x-{\bf R}_k|}$ and the local part of pseudopotential.
Since $e_{\rm xc}:[0,\infty)\rightarrow\mathbb{R}$ does not have a simple analytical expression,
we shall use some approximations and assume throughout this paper that
\begin{eqnarray}\label{assumption-a0}
e_{\rm xc}(t)\in \mathscr{P}(3,(c_1,c_2))~{\rm with}~c_1\geq 0
\quad{\rm or}\quad e_{\rm xc}(t)\in \mathscr{P}(4/3,(c_1,c_2)),
\end{eqnarray}
which is satisfied by almost all the LDAs.

The ground state of the system is obtained by solving the minimization problem
\begin{eqnarray}\label{problem-min}
\inf\left\{ E(\Phi):\Phi\in \mathbb{Q} \right\},
\end{eqnarray}
and we refer to \cite{anantharaman09,cancesM10,chen-gong-he-yang-zhou10} for the discussion of existence of a minimizer.
Note that the energy functional \eqref{eq-energy} is invariant with respect to any unitary transform, i.e.
\begin{eqnarray}\label{energy-invariance}
E(\Phi)=E(\Phi U)=E\big((\sum_{j=1}^Nu_{ij}\phi_j)_{i=1}^N\big)\quad\forall~U=(u_{ij})_{i,j=1}^N\in\mathcal{O}^{N\times N},
\end{eqnarray}
where $\mathcal{O}^{N\times N}$ is the set of orthogonal matrices.
It follows from \eqref{energy-invariance} that if $\Phi$ is a minimizer of \eqref{problem-min},
then $\Phi U$ is also a minimizer for any orthogonal matrix $U$.
%
For any $\Psi\in \mathcal{H}$, we define the equivalence class
\begin{eqnarray*}
[\Psi]=\{\Psi U,~\forall~U\in \mathcal{O}^{N\times N}\}.
\end{eqnarray*}


We see that any minimizer $\Phi=(\phi_1,\cdots,\phi_N)$ of \eqref{problem-min} satisfies the following weak form
(i.e. the Euler-Lagrange equation associated with the minimization problem):
\begin{eqnarray}\label{problem-eigen-compact-L}
\left\{ \begin{array}{rcl} (H_{\Phi}\phi_{i},v) &=&
\displaystyle \big( \sum_{j=1}^N \lambda_{ij}\phi_{j},v \big)
\quad\forall~ v\in H_0^1(\Omega), \quad i=1,2,\cdots,N,\\[1ex]
\displaystyle \int_{\Omega}\phi_{i}\phi_{j} &=& \delta_{ij},
\end{array} \right.
\end{eqnarray}
where $H_{\Phi}$ is the Kohn-Sham Hamiltonian operator as
\begin{eqnarray}\label{eq-operator-A}
H_{\Phi} = -\frac{1}{2}\Delta + V_{\rm loc} + V_{\rm nl}
+ \int_{\Omega}\frac{\rho_{\Phi}(y)}{|\cdot-y|}dy
+ e_{\rm xc}'(\rho_{\Phi})
\end{eqnarray}
and
\begin{eqnarray}\label{eq-Lambda}
\Lambda=(\lambda_{ij})_{i,j=1}^N =\left( \int_{\Omega}\phi_jH_{\Phi}\phi_i \right)_{i,j=1}^N
\end{eqnarray}
is the Lagrange multiplier.
Since the uniqueness of the ground state solution is unknown even up to a unitary transform,
we define the set of ground states by
\begin{eqnarray}\label{eq-theta}
\Theta=\left\{(\Lambda,\Phi)\in\mathbb{R}^{N\times N}\times\mathbb{Q}:
E(\Phi)=\min_{\Psi\in\mathbb{Q}}E(\Psi)~\mbox{and}~ (\Lambda,\Phi)\mbox{ solves
\eqref{problem-eigen-compact-L}}\right\}.
\end{eqnarray}
Note that the electron density $\rho_{\Phi}$ and the operator $H_{\Phi}$ are also invariant under any unitary transform,
we may diagonalize the matrix of Lagrange multipliers $\Lambda$. More precisely,
there exists a $U\in \mathcal{O}^{N\times N}$, such that the Lagrange multiplier is diagonal
for $\Psi =\Phi U=(\psi_1,\cdots,\psi_N)$, i.e.,
$$
\int_{\Omega}\psi_j H_{\Psi}\psi_i =\mu_i \delta_{ij}.
$$
Consequently, instead of \eqref{problem-eigen-compact-L}, we may consider a form with diagonal multiplier as follows:
\begin{eqnarray}\label{problem-eigen-compact}
\left\{ \begin{array}{rcl} (H_{\Psi}\psi_i,v) &=&
(\mu_i \psi_i,v) \quad\forall~ v\in H_0^1(\Omega),
\quad i=1,2,\cdots,N,\\[1ex] \displaystyle
\int_{\Omega}\psi_i \psi_j &=& \delta_{ij}, \end{array} \right.
\end{eqnarray}
which is the standard Kohn-Sham equation.

Note that any solution of \eqref{problem-eigen-compact-L} can be obtained
from a unitary transform of some solution of \eqref{problem-eigen-compact}.
That  is, once we get all solution of \eqref{problem-eigen-compact}, we then obtain all solution of \eqref{problem-eigen-compact-L}.
Consequently, we also call \eqref{problem-eigen-compact-L}  Kohn-Sham equation.

It is well known that the ground state has one electron in each of the $N$ orbitals with the lowest $N$ eigenvalues \cite{martin04}.  Therefore, the ground state solutions in \eqref{eq-theta}
 can be obtained by solving the lowest $N$ eigenpairs of \eqref{problem-eigen-compact}.

For convenience, define $\mathcal{F}:\mathbb{R}^{N\times N}\times \mathcal{H}\rightarrow \mathcal{H}^*$ by
\begin{eqnarray*}
\langle \mathcal{F}(\Lambda,\Phi),\Gamma\rangle =\sum_{i=1}^N \big(
H_{\Phi}\phi_i- \sum_{j=1}^N\lambda_{ij}\phi_{j},\gamma_i\big)\quad
\forall~\Gamma=(\gamma_i)_{i=1}^N\in \mathcal{H}.
\end{eqnarray*}
The Fr\'{e}chet derivative of $\mathcal{F}$ with respect to $\Phi$ at $(\Lambda,\Phi)$ is denoted by
$\mathcal{F}_{\Phi}'(\Lambda,\Phi):\mathcal{H}\rightarrow
\mathcal{H}^{\ast}$ as follows
\begin{eqnarray*}\label{derivative-F}
&&\langle \mathcal{F}_{\Phi}'(\Lambda,\Phi)\Psi,\Gamma \rangle
=\frac{1}{4}E''(\Phi)(\Psi,
\Gamma)-\sum_{i,j=1}^N(\lambda_{ij}\psi_j,\gamma_i)\nonumber\\
&=& \sum_{i=1}^N \big(
H_{\Phi}\psi_i- \sum_{j=1}^N\lambda_{ij}\psi_{j},\gamma_i
\big)+4\sum_{i,j=1}^N\big(e_{\rm xc}''(\rho_{\Phi})
\phi_i\psi_i,\phi_j\gamma_j\big) + \sum_{i,j=1}^N 4D(\phi_i\psi_i,\phi_j
\gamma_j).
\end{eqnarray*}

To study the convergence and complexity, we need the following assumptions \cite{chen-gong-he-yang-zhou10}
\vskip 0.1cm
\begin{itemize}
\item[{\bf A1}]
$|e_{\rm xc}'(t)|+|te_{\rm xc}''(t)|\in
\mathscr{P}(p_1,(c_1,c_2))$ for some $p_1\in [0,2]$.

\item[{\bf A2}]
There exists a constant $\alpha\in(0,1]$ such that
$|e_{\rm xc}''(t)| +|te_{\rm xc}'''(t)|\lesssim 1+t^{\alpha-1} \quad\forall~t>0$.

\item[{\bf A3}]
$(\Lambda,\Phi)$ is a solution of \eqref{problem-eigen-compact-L} and there exists a constant $\beta>0$
depending on $(\Lambda,\Phi)$ such that
\begin{eqnarray}\label{assumption-a3}
\inf_{\Gamma\in\mathcal{T}_{\Phi}}\sup_{\Psi\in\mathcal{T}_{\Phi}}\frac{\langle
\mathcal{F}_{\Phi}'(\Lambda,\Phi)\Psi,\Gamma \rangle}{\|\Psi\|_{1,\Omega}\|\Gamma\|_{1,\Omega}}\geq \beta.
\end{eqnarray}
\end{itemize}

\begin{remark}
We see that Assumption {\bf A2} implies Assumption {\bf A1} and
the commonly used $X_{\alpha}$ and LDA exchange-correlation energy functionals satisfy Assumption {\bf A2}.

Assumption {\bf A3} is equivalent to  that $\mathcal{F}_{\Phi}'(\Lambda,\Phi)$ is
an isomorphism from $\mathcal{T}_{\Phi}$ to $\mathcal{T}_{\Phi}$.
We observe that if Assumption {\bf A3} is satisfied for $\Phi\in\mathbb{Q}$,
then Assumption {\bf A3} is satisfied for any $\tilde{\Phi}\in[\Phi]$ with the same constant $\beta$, too.
We see that a stronger condition than \eqref{assumption-a3} that
\begin{eqnarray*}\label{coervicty}
\langle\mathcal{F}'_{\Phi}(\Lambda,\Phi)\Gamma,\Gamma\rangle
\geq\gamma\|\Gamma\|^2_{1,\Omega} \quad \forall~\Gamma\in\mathcal{T}_{\Phi}
\end{eqnarray*}
is used in \cite{cancesM10,schneider09}, which is satisfied for a linear self-adjoint operator
when there is a gap between the lowest $N$th eigenvalue and $(N+1)$th eigenvalue \cite{schneider09}.
\end{remark}

\subsection{Adaptive finite element approximations} \label{subsec-fem}
Let $d_{_\Omega}$ be the diameter of $\Omega$ and $\{\mathcal{T}_h\}$ be a shape regular family of nested
conforming meshes over $\Omega$ with size $h\in (0,d_{_\Omega})$:
there exists a constant $\gamma^{\ast}$ such that
\begin{eqnarray}\label{shape-regularity}
\frac{h_{\tau}}{\rho_{\tau}} \leq \gamma^{\ast} \quad\forall~\tau \in \mathcal{T}_h,
\end{eqnarray}
where $h_{\tau}$ is the diameter of $\tau$ for each $\tau\in \mathcal{T}_h$, $\rho_{\tau}$ is the diameter of the biggest ball contained in $\tau$,
and $h=\max\{h_{\tau}: \tau\in \mathcal{T}_h\}$. Let $\mathcal{E}_h$ denote the set of interior faces (edges or sides) of $\mathcal{T}_h$.

Let $S^{h,k}(\Omega)$ be a subspace of continuous functions on $\Omega$ such that
\begin{eqnarray*}
S^{h,k}(\Omega)=\{v\in C(\bar{\Omega}): ~v|_{\tau}\in P^k_{\tau}
\quad\forall~\tau \in \mathcal{T}_h\},
\end{eqnarray*}
where $P^k_{\tau}$ is the space of polynomials of degree no greater than $k$ over $\tau$. Let $S^{h,k}_0(\Omega)=S^{h,k}(\Omega)\cap H^1_0(\Omega)$.
We shall denote $S^{h,k}_0(\Omega)$ by $S^{h}_0(\Omega)$ for simplification of notation afterwards and let $V_h=(S^h_0(\Omega))^N$.

We consider the following finite element approximations of \eqref{problem-min}:
\begin{eqnarray}\label{problem-min-dis}
\inf\{ E(\Phi_h):\Phi_h\in V_h\cap\mathbb{Q}\}.
\end{eqnarray}
We see from \cite{anantharaman09,chen-gong-he-yang-zhou10}  that the minimizer of \eqref{problem-min-dis} exists under condition (\ref{assumption-a0})
%
Note that any minimizer $\Phi_h=(\phi_{1,h},\cdots,\phi_{N,h})$ of \eqref{problem-min-dis} solves the Euler-Lagrange  equation
\begin{eqnarray}\label{problem-eigen-compact-dis}
\left\{ \begin{array}{rcl} (H_{\Phi_h}\phi_{i,h},v) &=& \displaystyle \big( \sum_{j=1}^N \lambda_{ij,h}\phi_{j,h},v \big)
\quad\forall ~v\in S^h_0(\Omega), \quad i=1,2,\cdots,N,\\[1ex]
\displaystyle \int_{\Omega}\phi_{i,h}\phi_{j,h} &=& \delta_{ij}
\end{array} \right.
\end{eqnarray}
with the Lagrange multiplier
\begin{eqnarray*}\label{eq-Lambda-dis}
\Lambda_h=(\lambda_{ij,h})_{i,j=1}^N =\left( \int_{\Omega}\phi_{j,h}H_{\Phi_h}\phi_{i,h} \right)_{i,j=1}^N.
\end{eqnarray*}
Define the set of finite dimensional ground state solutions:
\begin{eqnarray*}\label{set-ground-dis}
\Theta_h=\left\{(\Lambda_h,\Phi_h)\in\mathbb{R}^{N\times N}\times (\mathbb{Q}\cap V_h):
E(\Phi_h)=\min_{\Psi\in\mathbb{Q}\cap V_h}E(\Psi) \mbox{ and
}(\Lambda_h,\Phi_h)\mbox{ solves }\eqref{problem-eigen-compact-dis} \right\}.
\end{eqnarray*}
We have from \cite{chen-gong-he-yang-zhou10} that the finite dimensional
approximations are uniformly bounded, i.e., there exists a constant $C$ such that
\begin{eqnarray}\label{eq-bounded}
\sup_{(\Lambda_h,\Phi_h)\in \Theta_h, h\in (0,d_{_\Omega})}(\|\Phi_h\|_{1,\Omega}+|\Lambda_h|)<C.
\end{eqnarray}

Using a unitary transform, we can diagonalize $\Lambda_h$ and obtain a discrete Kohn-Sham equation
\begin{eqnarray}\label{problem-eigen-dis}
\left\{ \begin{array}{rcl} (H_{\Psi_h}\psi_{i,h},v) &=&
(\mu_{i,h}\psi_{i,h},v) \quad\forall~ v\in S^h_0(\Omega),
\quad i=1,2,\cdots,N,\\[1ex] \displaystyle
\int_{\Omega}\psi_{i,h}\psi_{j,h} &=& \delta_{ij} \end{array}\right.
\end{eqnarray}
with $\mu_{i,h}=(H_{\Psi_h}\psi_{i,h},\psi_{i,h})$.

Similar to the continuous case, we have that any solution of \eqref{problem-eigen-compact-dis} can be obtained from a unitary transform of some solution of  \eqref{problem-eigen-dis}.
That is,
 \begin{eqnarray*}
  \Theta_h=\left\{(\Lambda_h,\Phi_h)\in\mathbb{R}^{N\times N}\times (\mathbb{Q}\cap V_h): \Phi_h \in [\Psi_h] ~\mbox{and}~\Lambda_h = \Phi_h^T H_{\Phi_h} \Phi_h,  \forall \Psi_h ~\mbox{with}~ (\bbmu_h, \Psi_h) \in \Xi_h\right\},
 \end{eqnarray*}
 where
 \begin{eqnarray*}
 \Xi_h= \left\{(\bbmu_h, \Psi_h)\in \mathbb{R}^{N\times N} \times (\mathbb{Q}\cap V_h): E(\Psi_h)=\min_{\Psi\in\mathbb{Q}\cap V_h}E(\Psi) \mbox{ and
}(\bbmu_h,\Psi_h)\mbox{ solves } (\ref{problem-eigen-dis}) \right\}.
 \end{eqnarray*}
Since (\ref{problem-eigen-dis}) is solvable, to get $\Theta_h$, we always resort to solving \eqref{problem-eigen-dis} in practice.

An adaptive mesh-refining algorithm usually consists of the following loop \cite{cascon-kreuzer-nochetto-siebert08, dai-xu-zhou08}:
$$
\mbox{\bf Solve}~\rightarrow~\mbox{\bf Estimate}~\rightarrow~\mbox{\bf Mark}~\rightarrow~\mbox{\bf Refine}.
$$

{\bf Solve.}
This step computes the piecewise polynomial finite element approximation with respect to a given mesh.
To simplify the analysis and do as the most work on numerical study of convergence of AFE approximations, we shall assume throughout this paper that we have the exact solutions of
discretized problems\footnote{
Similar conclusion can be expected for  the case where the errors of numerical
integrations and nonlinear algebraic solvers are included (see Section \ref{sec-conclude}). And we
understand that the assumption is indeed a very important practical issue.
}.

{\bf Estimate.} Given a partition $\mathcal{T}_h$ and the corresponding output $(\Lambda_h,\Phi_h)$ from the
``Solve'' step, ``Estimate'' computes the a posteriori error estimator
$\{\eta_h(\Phi_h,\tau)\}_{\tau\in\mathcal{T}_h}$, which is defined as follows.
Define the element residual $\mathcal{R}_\tau(\Phi_h)$ and the jump $J_e(\Phi_h)$ by
\begin{gather*}
\mathcal{R}_{\tau}(\Phi_h) = \big( H_{\Phi_h}\phi_{i,h}-\sum_{j=1}^N\lambda_{ij,h}\phi_{j,h} \big)_{i=1}^N
\quad \mbox{in}~\tau\in \mathcal{T}_h,\\[1ex]
J_e(\Phi_h) =  \Big(  j_e(\phi_{i,h})\Big)_{i=1}^N, ~~j_e(\phi_{i,h}) = \frac{1}{2}\nabla \phi_{i,h} |_{\tau_1}\cdot\overrightarrow{n_1} + \frac{1}{2}\nabla \phi_{i,h}
|_{\tau_2}\cdot\overrightarrow{n_2},
\end{gather*}
where  $e$ is the common face of elements $\tau_1$ and $\tau_2$ with
unit outward normals $\overrightarrow{n_1}$ and $\overrightarrow{n_2}$, respectively.
Let $\omega_h(e)$ be the union of elements that share the face $e$,
and $\omega_h(\tau)$ be the union of elements that share an edge with $\tau$.
For $\tau\in \mathcal{T}_h$, we define  local error indicator $\eta_h(\Phi_h, \tau)$ and the oscillation
${\rm osc}_h(\Phi_h,\tau)$ by
\begin{eqnarray*}\label{Gerror-indicator-1}
\eta^2_h(\Phi_h, \tau) = h_\tau^2\|\mathcal{R}_{\tau}(\Phi_h)\|_{0,\tau}^2 + \sum_{e\in
\mathcal{E}_h,e\subset\partial \tau} h_e \|J_e(\Phi_h)\|_{0,e}^2,
\end{eqnarray*}
\vskip -0.2cm
\begin{eqnarray*}\label{Glocal-oscillation}
{\rm osc}_h(\Phi_h,\tau) = h_\tau\|\mathcal{R}_{\tau}(\Phi_h) -\overline{\mathcal{R}_{\tau}(\Phi_h)}\|_{0,\tau},
\end{eqnarray*}
where $\overline{w}$ is the $L^2$-projection of $w\in L^2(\Omega)$
to polynomials of some degree  on $\tau$ or $e$.
Given a subset $\omega \subset \Omega$, we define the error estimator $\eta_h(\Phi_h, \omega)$ and
the oscillation ${\rm osc}_h(\Phi_h,\omega)$  by
\begin{eqnarray*}\label{Gerror-estimator-1}
\eta^2_h(\Phi_h, \omega) = \sum_{\tau\in \mathcal{T}_h, \tau\subset \omega} \eta^2_h(\Phi_h, \tau)\quad
\mbox{and}\quad
{\rm osc}^2_h(\Phi_h, \omega) = \sum_{\tau\in \mathcal{T}_h, \tau \subset \omega} {\rm osc}^2_h(\Phi_h, \tau).
\end{eqnarray*}

{\bf Mark.}  We shall replace the subscript $h$ (or $h_k$) by an iteration
counter $k$  whenever convenient afterwards.
Based on the a posteriori error indicators
$\{\eta_k(\Phi_k,\tau)\}_{\tau\in\mathcal{T}_k}$, ``Mark'' gives a strategy to
choose a subset of elements $\mathcal{M}_k$ of $\mathcal{T}_k$ for refinement.
One of the most widely used marking strategy to enforce error reduction is the so-called D\"{o}rfler strategy.

\vskip0.2cm
{D\"{o}rfler Strategy.}\quad Given a parameter $0<\theta <1$ :
\begin{enumerate}
\item Construct a subset $\mathcal{M}_k$ of
$\mathcal{T}_k$ by selecting some elements in $\mathcal{T}_k$ such
that
\begin{eqnarray}\label{mark-strategy}
\sum_{\tau\in \mathcal{M}_k}\eta^2_k({\Phi}_k, \tau)  \geq \theta
\sum_{\tau\in \mathcal{T}_k} \eta^2_k({\Phi}_k,\tau).
\end{eqnarray}
\item Mark all the elements in $\mathcal{M}_k$.
\end{enumerate}
A weaker strategy, which is called ``Maximum Strategy",
only requires that the set of marked elements $\mathcal{M}_k$ contains at least one element of $\mathcal{T}_k$ holding
the largest value estimator \cite{garau-morin11,garau-morin-zuppa09}. Namely, there exists at least one element
$\tau^{\max}_k\in \mathcal{M}_k$ such that
\begin{eqnarray}\label{mark-pri}
\eta_k(\Phi_k,\tau^{\max}_k)=\max_{\tau\in \mathcal{T}_k}\eta_k(\Phi_k,\tau).
\end{eqnarray}
It is easy to check that the most commonly used marking strategies, e.g., D\"{o}rfler's strategy
and Equidistribution strategy, fulfill this condition.

{\bf Refine.} Given the  partition $\mathcal{T}_k$ and the set of marked elements $\mathcal{M}_k$,
``Refine'' produces a new partition $\mathcal{T}_{k+1}$ by refining all elements
in $\mathcal{M}_k$ at least one time. We restrict ourself to a shape-regular bisection for the refinement.
Define
\begin{eqnarray*}
\mathcal{R}_{\mathcal{T}_k\rightarrow
\mathcal{T}_{k+1}}=\mathcal{T}_k\backslash(\mathcal{T}_k\cap
\mathcal{T}_{k+1})
\end{eqnarray*}
as the set of refined elements, we have $\mathcal{M}_k\subset
\mathcal{R}_{\mathcal{T}_k\rightarrow \mathcal{T}_{k+1}}$. Note that usually
more than the marked elements in $\mathcal{M}_k$ are refined in
order to keep the mesh conforming.

\section{Convergence of adaptive finite element approximations}\label{sec-adaptive}
\setcounter{equation}{0}

In this section, we propose and investigate
 an AFE algorithm with Maximum Strategy for Kohn-Sham equations as follows:
\vskip 0.1cm
\begin{algorithm}\label{algorithm-AFEM} {\bf AFE algorithm with Maximum Strategy}
\begin{enumerate}
\item Pick an initial mesh $\mathcal{T}_0$, and let $k=0$.
\item Solve \eqref{problem-eigen-dis} on $\mathcal{T}_k$ to get discrete
solutions $(\mu_{i,k},\psi_{i,k})(i=1,\cdots,N)$ and then $\Theta_k$.
\item Compute local error indictors
$\eta_k(\Psi_k,\tau)$ for all $\tau\in \mathcal{T}_k$.
\item Construct $\mathcal{M}_k \subset \mathcal{T}_k$ by Maximum Strategy.
\item Refine $\mathcal{T}_k$ to get a new conforming mesh $\mathcal{T}_{k+1}$.
\item Let $k=k+1$ and go to 2.
\end{enumerate}
\end{algorithm}
\vskip 0.1cm

We shall prove that all the limit points of the AFE approximations generated by
Algorithm \ref{algorithm-AFEM} are ground state solutions of \eqref{problem-eigen-compact-L},
for which we shall use the similar arguments in  \cite{chen-he-zhou09, garau-morin-zuppa09, zhou04, zhou07}.
Given an initial mesh $\mathcal{T}_0$, Algorithm \ref{algorithm-AFEM} generates a sequence of meshes
$\mathcal{T}_1,\mathcal{T}_2,\cdots$, and associated discrete subspaces
\begin{eqnarray*}
S_0^{h_0}(\Omega)\subsetneq S_0^{h_1}(\Omega)\subsetneq \cdots \subsetneq S_0^{h_n}(\Omega)\subsetneq S_0^{h_{n+1}}(\Omega)
\subsetneq \cdots \subsetneq S_{\infty}(\Omega) \subseteq H_0^1(\Omega),
\end{eqnarray*}
where $\displaystyle S_{\infty}(\Omega)=\overline{\cup_{k=1}^{\infty} S_0^{h_k}(\Omega)}^{H_0^1(\Omega)}$.
Similar to the definition for $V_h$, we set $V_{\infty}=(S_{\infty}(\Omega))^N$.
We have that $V_{\infty}$ is a Hilbert space with the inner product inherited from $\mathcal{H}$ and
\begin{eqnarray}\label{space-appro}
\lim_{k\rightarrow\infty}\inf_{\Psi_k\in V_{h_k}} \|\Psi_k-\Psi_{\infty}\|_{1,\Omega}=0 \quad\forall~\Psi_{\infty}\in
V_{\infty}.
\end{eqnarray}
Using a direct calculation (see \cite{chen-gong-he-yang-zhou10}), we derive that
$$
\inf_{\tilde{\Psi}_k\in V_{h_k} \cap\mathbb{Q}} \|\tilde{\Psi}_k-\Psi_{\infty}\|_{1,\Omega}
\lesssim \inf_{\Psi_k\in V_{h_k}} \|\Psi_k-\Psi_{\infty}\|_{1,\Omega}
\quad\forall~\Psi_{\infty}\in V_{\infty}\cap\mathbb{Q}
$$
for any $k\in\mathbb{N}$, and hence
\begin{eqnarray}\label{proof-add-3}
\lim_{k\rightarrow\infty} \inf_{\tilde{\Psi}_k\in V_{h_k} \cap\mathbb{Q}} \|\tilde{\Psi}_k-\Psi_{\infty}\|_{1,\Omega}=0
\quad\forall~\Psi_{\infty}\in V_{\infty}\cap\mathbb{Q}.
\end{eqnarray}

From \cite{anantharaman09, chen-gong-he-yang-zhou10}, we know that if Assumption {\bf A2} is satisfied, then the minimizer of energy functional  \eqref{eq-energy}  in $V_{\infty} \cap \mathbb{Q}$ exists.

We see that any minimizer $\Phi_{\infty}=(\phi_{1,\infty},\cdots,\phi_{N,\infty})\in V_{\infty}\cap \mathbb{Q}$ solves
 the following Euler-Lagrange equation
\begin{eqnarray}\label{problem-eigen-infty}
\left\{ \begin{array}{rcl} (H_{\Phi_{\infty}}\phi_{i,\infty},v) &=& \displaystyle
\big( \sum_{j=1}^N \lambda_{ij,\infty}\phi_{j,\infty},v \big)
\quad\forall ~v\in S_{\infty}(\Omega), \quad i=1,2,\cdots,N,\\[1ex]
\displaystyle \int_{\Omega}\phi_{i,\infty}\phi_{j,\infty} &=& \delta_{ij}\end{array} \right.
\end{eqnarray}
with the Lagrange multiplier
\begin{eqnarray}\label{eq-Lambda-infty}
\Lambda_{\infty}=(\lambda_{ij,\infty})_{i,j=1}^N =\left( \int_{\Omega}\phi_{j,\infty}
H_{\Phi_{\infty}}\phi_{i,\infty} \right)_{i,j=1}^N.
\end{eqnarray}
Define
\begin{eqnarray*}
\Theta_{\infty}=\{(\Lambda_{\infty},\Phi_{\infty})\in\mathbb{R}^{N\times N}\times (V_{\infty} \cap \mathbb{Q}):
E(\Phi_{\infty})=\min_{\Psi\in V_{\infty} \cap \mathbb{Q}} E(\Psi) \\
 \mbox{
and } (\Lambda_{\infty},\Phi_{\infty})\mbox{ solves } \eqref{problem-eigen-infty}\}.
\end{eqnarray*}
Using similar arguments to those in the proof of Theorem 4.1 in \cite{chen-he-zhou09},
we can prove that the AFE approximations for the Kohn-Shan equation converge to some limiting pair in $\Theta_{\infty}$.
\begin{lemma}\label{theorem-con-infty}
Let $\{\Theta_k\}_{k\in\mathbb{N}}$ be the sequence obtained by Algorithm \ref{algorithm-AFEM}.
We have
\begin{gather*}\label{con-u-infty}
\lim_{k\to\infty}E_k=\min_{\Psi\in V_{\infty}\cap \mathbb{Q}}E(\Psi),\\
\lim_{k\to\infty}d_{\mathcal{H}}(\Theta_k,\Theta_{\infty})=0,
\end{gather*}
where $E_k=E(\Phi)((\Lambda,\Phi)\in\Theta_k)$ and the distance between sets
$X,Y\subset\mathbb{R}^{N\times N}\times\mathcal{H}$ is defined by
\begin{eqnarray*}\label{dis-H}
d_{\mathcal{H}}(X,Y)=\sup_{(\Lambda,\Phi)\in X}\inf_{(\bbmu,\Psi)\in Y} (|\Lambda-\bbmu|+\|\Phi-\Psi\|_{1,\Omega}).
\end{eqnarray*}

\end{lemma}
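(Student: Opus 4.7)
The plan is to adapt the orbital-free argument of \cite{chen-he-zhou09} to handle an $N$-orbital nonlinear eigenvalue system whose ground state set is invariant under $\mathcal{O}^{N\times N}$. I would derive both conclusions of the lemma from a single compactness argument, organized into an energy-convergence step followed by a subsequential limit-extraction step.

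For the energy statement, I would first observe that nestedness $V_k\cap\mathbb{Q}\subset V_{k+1}\cap\mathbb{Q}\subset V_\infty\cap\mathbb{Q}$ gives the monotone relation $E_k\ge E_{k+1}\ge \min_{V_\infty\cap\mathbb{Q}}E$, so $E_k$ converges to some $E_\ast$. To match $E_\ast$ with the infimum over $V_\infty\cap\mathbb{Q}$, I would take any $\Psi\in V_\infty\cap\mathbb{Q}$, use \eqref{space-appro} to pick $\tilde\Psi_k\in V_k$ with $\tilde\Psi_k\to\Psi$ in $\mathcal{H}$, orthonormalize (well defined for large $k$ since the Gram matrix of $\tilde\Psi_k$ tends to $I$), and invoke the continuity of $E$ on $\mathcal{H}$, which follows from Assumption A1, the Sobolev embedding $H^1(\Omega)\hookrightarrow L^p(\Omega)$ for $p\le 6$, and the Hardy--Littlewood--Sobolev inequality applied to the Hartree term. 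This yields $\limsup_k E_k\le E(\Psi)$, hence $E_\ast=\min_{V_\infty\cap\mathbb{Q}}E$.

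For the set-distance statement, I would argue by contradiction: if $d_\mathcal{H}(\Theta_k,\Theta_\infty)\not\to 0$, some subsequence $(\Lambda_k,\Phi_k)\in\Theta_k$ stays uniformly away from $\Theta_\infty$. By the uniform bound \eqref{eq-bounded}, I can extract a further subsequence with $\Lambda_k\to\Lambda_\ast$ in $\mathbb{R}^{N\times N}$ and $\Phi_k\rightharpoonup\Phi_\ast$ in $\mathcal{H}$, with strong convergence in $(L^p(\Omega))^N$ for $p<6$ by Rellich--Kondrachov. Strong $L^p$ convergence controls $\rho_{\Phi_k}\to\rho_{\Phi_\ast}$ in all terms of $H_{\Phi_k}$, and the orthonormality of $\Phi_k$ passes to $\Phi_\ast$ by $L^2$-convergence, so $\Phi_\ast\in V_\infty\cap\mathbb{Q}$ (using that $V_\infty$ is $H^1$-closed). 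Given any $v\in S_\infty(\Omega)$, I approximate it by $v_k\in S_0^{h_k}(\Omega)$ with $v_k\to v$ in $H^1_0$, test \eqref{problem-eigen-compact-dis} with $v_k$, and pass to the limit using weak convergence on the Laplacian part and strong $L^p$ convergence on the potential part; this yields that $(\Lambda_\ast,\Phi_\ast)$ satisfies \eqref{problem-eigen-infty}. The energy-convergence step then forces $E(\Phi_\ast)=E_\ast=\min_{V_\infty\cap\mathbb{Q}}E$, so $(\Lambda_\ast,\Phi_\ast)\in\Theta_\infty$; moreover, combining $E(\Phi_k)\to E(\Phi_\ast)$ with the already-strong convergence of the lower-order terms gives $\|\nabla\Phi_k\|_{0,\Omega}\to\|\nabla\Phi_\ast\|_{0,\Omega}$, which together with weak $H^1$-convergence upgrades to strong $H^1$-convergence, contradicting the assumed separation from $\Theta_\infty$.

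The main obstacle I anticipate is the passage to the limit in the nonlinear terms of $E$ and $H_{\Phi_k}$ under only the mild growth assumptions of \eqref{assumption-a0} and A1: the borderline $p=6$ in the embedding makes the cubic term $\int e_{\rm xc}(\rho_\Phi)$ of class $\mathscr{P}(3,(c_1,c_2))$ delicate, and this is precisely where the sign constraint $c_1\ge 0$ is used to keep the energy bounded below. A secondary subtlety is that, because of the unitary invariance \eqref{energy-invariance}, the sequence $(\Lambda_k,\Phi_k)$ need not converge; using the one-sided distance $d_\mathcal{H}$ to sets sidesteps this redundancy, and the contradiction argument above is the natural device to convert subsequential convergence into convergence of the full sequence of distances.
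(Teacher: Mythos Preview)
Your proposal is correct and follows essentially the same compactness route as the paper: weak $\mathcal{H}$-convergence plus Rellich gives strong $L^p$ convergence of the lower-order terms, the minimizing-sequence property (which you make explicit via orthonormalization, whereas the paper simply invokes \eqref{space-appro}) forces the weak limit to be a minimizer in $V_\infty\cap\mathbb{Q}$, and equality of energies then upgrades $\|\nabla\Phi_k\|_{0,\Omega}\to\|\nabla\Phi_\ast\|_{0,\Omega}$ to strong $H^1$-convergence. Your direct passage to the limit in \eqref{problem-eigen-compact-dis} is not needed in the paper's version, since once $\Phi_\ast$ is a minimizer the Euler--Lagrange relation \eqref{problem-eigen-infty} with $\Lambda_\infty$ given by \eqref{eq-Lambda-infty} follows automatically, and $\Lambda_k\to\Lambda_\infty$ then comes from the strong $H^1$-convergence already established.
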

\begin{proof}
Let $(\Lambda_{k},\Phi_{k}) \in \Theta_k$ for $k=1,2,\cdots$, and
$\{(\Lambda_{k_m},\Phi_{k_m})\}_{m\in \mathbb{N}}$ be any subsequence of
$\{(\Lambda_k,\Phi_k)\}_{k\in \mathbb{N}}$ with $1\leq k_1<k_2<\cdots <k_m<\cdots$.

First, following \cite{zhou04, zhou07} (see also \cite{chen-he-zhou09}), we have from (\ref{eq-bounded}) and the Eberlein-Smulian Theorem  that there exists a
weakly convergent subsequence $\{\Phi_{k_{m_j}}\}_{j\in \mathbb{N}}$ and $\Phi_{\infty}\in V_{\infty}$ satisfying
\begin{eqnarray}\label{con-weak-proof}
\Phi_{k_{m_j}}\rightharpoonup \Phi_{\infty} \quad \mbox{in}~\mathcal{H},
\end{eqnarray}
thus it is sufficient to prove
\begin{eqnarray}\label{temp1}
E(\Phi_{\infty})=\min_{\Psi\in V_{\infty} \cap \mathbb{Q}}E(\Psi),
\end{eqnarray}
\begin{eqnarray}\label{temp2}
\lim_{j\to\infty}\big(\|\Phi_{k_{m_j}}-\Phi_{\infty}\|_{1,\Omega}+|\Lambda_{k_{m_j}}-\Lambda_{\infty}|\big)=0.
\end{eqnarray}
Since $H_0^1(\Omega)$ is compactly imbedded in $L^p(\Omega)$ for $p\in[2,6)$,  we have that $\Phi_{k_{m_j}}\to \Phi_{\infty}$ strongly in $(L^p(\Omega))^N$ as $j\rightarrow \infty$.
Hence, we obtain 
that
\begin{gather*}
\lim_{j\to\infty}\int_{\Omega} V_{\rm loc}(x)\rho_{\Phi_{k_{m_j}}}  = \int_{\Omega} V_{\rm loc}(x)\rho_{\Phi_{\infty}} , \\[1ex]
\lim_{j\to\infty}\int_{\Omega} \sum_{i=1}^N \phi_{i, k_{m_j}}V_{\rm nl}\phi_{i, k_{m_j}} = \int_{\Omega} \sum_{i=1}^N\phi_{i, \infty} V_{\rm nl}\phi_{i, \infty},\\[1ex]
\lim_{j\to\infty}\int_{\Omega}e_{\rm xc}(\rho_{\Phi_{k_{m_j}}})=\int_{\Omega}e_{\rm xc}(\rho_{\Phi_{\infty}}), \\[1ex]
\lim_{j\to\infty}D(\rho_{\Phi_{k_{m_j}}}, \rho_{\Phi_{k_{m_j}}})=D(\rho_{\Phi_{\infty}},\rho_{\Phi_{\infty}}),
\end{gather*}
where \eqref{assumption-a0}  is used for the third equality.
Besides, from (\ref{con-weak-proof}) we have
\begin{eqnarray*}
\liminf_{j\to\infty}   \| \nabla\Phi_{k_{m_j}}\|_{0, \Omega}  \geq \| \nabla\Phi_{\infty}\|_{0, \Omega}.
\end{eqnarray*}
Thus,
\begin{eqnarray}\label{semi-lower}
\liminf_{j\to\infty}   E(\Phi_{k_{m_j}})\geq E(\Phi_{\infty}).
\end{eqnarray}

Let $\Psi_{\infty}$ be a minimizer of the energy functional in $V_{\infty}\cap\mathbb{Q}$.
\eqref{proof-add-3} implies that there exists a sequence $\{\Psi_j\}_{j\in\mathbb{N}}$ such that
$\Psi_j\in V_{k_{m_j}}\cap\mathbb{Q}$ and $\Psi_j\rightarrow \Psi_{\infty}$ in $\mathcal{H}$. Therefore,
\begin{eqnarray}\label{proof-add-2}
E(\Psi_{\infty})=\lim_{j\rightarrow\infty}E(\Psi_j).
\end{eqnarray}
Note that $\{\Phi_{k_{m_j}}\}$ converge to $\Phi_{\infty}$
strongly in $(L^2(\Omega))^N$ leads to $\Phi_{\infty}\in V_{\infty}\cap\mathbb{Q}$,
we have
\begin{eqnarray}\label{proof-add-1}
E(\Phi_{\infty})\geq E(\Psi_{\infty}).
\end{eqnarray}
Since $\Phi_{k_{m_j}}$ is a minimizer of the energy functional in $V_{k_{m_j}}\cap\mathbb{Q}$, we obtain
$$
E(\Psi_j)\geq E(\Phi_{k_{m_j}}),
$$
which together with \eqref{semi-lower}, \eqref{proof-add-2} and \eqref{proof-add-1} leads to
\begin{eqnarray*}\label{engery-subconv}
\liminf_{j\to\infty}E(\Phi_{k_{m_j}})\geq E(\Phi_{\infty})\geq
E(\Psi_{\infty})=\lim_{j\rightarrow\infty}E(\Psi_j) \geq \liminf_{j\to\infty}E(\Phi_{k_{m_j}}).
\end{eqnarray*}
This implies
$$
\lim_{j\to\infty}E(\Phi_{k_{m_j}})=E(\Phi_{\infty})=\min_{\Psi\in V_{\infty}\cap\mathbb{Q}}E(\Psi)
$$
and  thus $(\Lambda_{\infty},\Phi_{\infty})\in\Theta_{\infty}$.

Therefore, we get   that each term of $E(\Phi)$ converges and in particular
\begin{eqnarray}\label{conv-grad-norm}
\lim_{j\to\infty}\|\nabla \Phi_{k_{m_j}}\|_{0,\Omega} = \|\nabla \Phi_{\infty}\|_{0,\Omega}.
\end{eqnarray}
Since $(H_0^1(\Omega))^N$ is a Hilbert space under
norm $\|\nabla\cdot\|_{0,\Omega}$, we conclude from  (\ref{con-weak-proof}) and (\ref{conv-grad-norm}) that
\begin{eqnarray*}
\lim_{j\to\infty}\|\nabla(\Phi_{k_{m_j}}-\Phi_{\infty})\|_{0,\Omega}=0,
\end{eqnarray*}
which together with  (\ref{eq-Lambda}), (\ref{eq-Lambda-infty}) and (\ref{temp1}) implies (\ref{temp2}).
This completes the proof.
\end{proof}

To show that the limit in $V_{\infty} \cap \mathbb{Q} $ is indeed a ground state solution,
we turn to the convergence of the a posteriori error estimators.
Following the ideas in \cite{chen-he-zhou09, garau-morin11, garau-morin-zuppa09, morin-siebrt-veeser08}, we split the partition $\mathcal{T}_k$ into two sets $\mathcal{T}^{+}_k$ and $\mathcal{T}^{0}_k$, where
\begin{eqnarray*}
\mathcal{T}^{+}_k=\{\tau\in \mathcal{T}_k:\tau\in \mathcal{T}_l,~ \forall~ l\geq k\}\quad \textnormal{and}
\quad \mathcal{T}^{0}_k= \mathcal{T}_k\setminus\mathcal{T}^{+}_k.
\end{eqnarray*}
Actually, $\mathcal{T}^+_k$ is the set of elements that are not refined any more,
and $\mathcal{T}^0_k$ consists of those elements that will eventually be refined.
We denote by
\begin{eqnarray*}
\Omega^+_k=\cup_{\tau\in \mathcal{T}^+_k}\omega_k(\tau)\quad \textnormal{and} \quad
\Omega^0_k=\cup_{\tau\in \mathcal{T}^0_k}\omega_k(\tau).
\end{eqnarray*}
Since the  mesh size function $h_k\equiv h_k(x)$ associated with $\mathcal{T}_k$ is
monotonically decreasing and bounded from below by 0, we have that
\begin{eqnarray*}
h_{\infty}(x)=\lim_{k\rightarrow \infty}h_k(x)
\end{eqnarray*}
is well-defined for almost all $x\in \Omega$ and hence defines a function in $L^{\infty}(\Omega)$.
Moreover, the convergence is uniform (see \cite{morin-siebrt-veeser08}),
more precisely, if $\{h_k\}_{k\in\mathbb{N}}$ is the sequence of mesh size functions generated by Algorithm \ref{algorithm-AFEM},
then
\begin{eqnarray}\label{con-h}
\lim_{k\rightarrow \infty}\|h_k-h_{\infty}\|_{0,\infty,\Omega}=0
\end{eqnarray}
and
\begin{eqnarray}\label{con-h-0}
\lim_{k\rightarrow \infty}\|h_k\chi_{\Omega^0_k}\|_{0,\infty,\Omega}=0,
\end{eqnarray}
where  $\chi_{\Omega^0_k}$ is the characteristic function of $\Omega^0_k$.

\begin{lemma}\label{estimator-stability}
Let $(\Lambda_h, \Phi_h)\in \Theta_h$. If Assumption {\bf A1} is satisfied,
then there exists a constant $C_{\eta}>0$ depending only on the mesh regularity,
such that $\eta_h(\Phi_h,\Omega)\leq C_{\eta}$ and
\begin{eqnarray*}
\eta_h(\Phi_h,\tau)\lesssim \|\Phi_h\|_{0,6,\omega_h(\tau)}+\|\Phi_h\|_{1,\omega_h(\tau)}
\quad\forall ~\tau\in \mathcal{T}_h.
\end{eqnarray*}
\end{lemma}

\begin{proof}
Using \eqref{eq-bounded}, the inverse inequality, the H\"{o}lder inequality, the trace inequality and Assumption {\bf A1}, we have
\begin{eqnarray*}
h_\tau\|\mathcal{R}_\tau(\Phi_h)\|_{0,\tau} &=&h_\tau \Big(\sum_{i=1}^N
\|-\sum_{j=1}^N\lambda_{ij,h}\phi_{j,h}- \frac{1}{2}\Delta \phi_{i,h} + V_{\rm loc}\phi_{i,h} + V_{\rm nl}\phi_{i,h} \\
&& + e_{\rm xc}'(\rho_{\Phi_h})\phi_{i,h} + (r^{-1}*\rho_{\Phi_h})\phi_{i,h}\|^2_{0,\tau} \Big)^{1/2} \nonumber\\
&\lesssim & \sum_{i=1}^N h_\tau \Big( \|\phi_{i,h}\|_{0,\tau}+\|\Delta \phi_{i,h}\|_{0,\tau} + \|V_{\rm loc}\phi_{i,h}\|_{0,\tau} +
\sum_{j=1}^n\|\zeta_j\|^2_{0,\tau}\|\phi_{i,h}\|_{0,\tau}\nonumber\\
&&+\|e_{\rm xc}'(\rho_{\Phi_h})\phi_{i,h}\|_{0,\tau}+\|(r^{-1}*\rho_{\Phi_h}) \phi_{i,h}\|_{0,\tau} \Big)\nonumber\\ &\lesssim
&\|\Phi_h\|_{0,6,\omega_h(\tau)}+\|\Phi_h\|_{1,\omega_h(\tau)}
\end{eqnarray*}
and
\begin{eqnarray*}
h_e^{1/2}\|J_e(\Phi_h)\|_{0,e} &=&  h_e^{1/2}\left(\sum_{i=1}^N \|\frac{1}{2}\nabla \phi_{i,h} \big|_{\tau_1}\cdot\overrightarrow{n_1} +
\frac{1}{2}\nabla
\phi_{i,h} \big|_{\tau_2}\cdot\overrightarrow{n_2}\|_{0,e}^2\right)^{1/2} \nonumber\\
&\lesssim&  h_e^{1/2} \left(\sum_{i=1}^N\big(\|\nabla
\phi_{i,h}|_{\tau_1}\|_{0,e}^2 + \|\nabla \phi_{i,h}|_{\tau_2}\|_{0,e}^2\big)\right)^{1/2}\nonumber\\
&\lesssim&h_e^{1/2}\left(h_e^{-1}\sum_{i=1}^N\|\nabla \phi_{i,h}\|_{0,\omega_h(\tau)}^2\right)^{1/2}\\
&\lesssim & \|\Phi_h\|_{1,\omega_h(\tau)}.
\end{eqnarray*}
Hence we obtain
\begin{eqnarray*}\label{eta-stability}
\eta_h(\Phi_h,\tau)\lesssim \|\Phi_h\|_{0,6,\omega_h(\tau)}+\|\Phi_h\|_{1,\omega_h(\tau)} \quad\forall~ \tau\in \mathcal{T}_h,
\end{eqnarray*}
which together with the Sobolev inequality implies $\eta_h(\Phi_h,\Omega)\leq C_{\eta}$,
where the constant $C_{\eta}>0$ depends only on the data and the mesh regularity. This completes the proof.
\end{proof}

Using similar procedure as in \cite{chen-he-zhou09,garau-morin-zuppa09}, we can prove that the maximal error indicator
$\max_{\tau\in \mathcal{M}_{k}} \eta_{k}(\Phi_{k},\tau)$ tends to zero.
\begin{lemma}\label{marking-limit}
Let $\{\Phi_k\}_{k\in \mathbb{N}}$ be the sequence produced by Algorithm \ref{algorithm-AFEM}.
If Assumption {\bf A1} is satisfied, then
\begin{eqnarray*}
\lim_{k\rightarrow \infty}\max_{\tau\in \mathcal{M}_{k}} \eta_{k}(\Phi_{k},\tau)=0.
\end{eqnarray*}
\end{lemma}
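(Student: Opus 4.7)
The plan is to combine the estimator stability of Lemma \ref{estimator-stability} with the uniform decay of the mesh size on ``non-permanent'' elements from \eqref{con-h-0} and the set-convergence from Lemma \ref{theorem-con-infty}, then argue by contradiction. First I would observe that, by the Maximum Strategy, for each $k$ there is at least one element $\tau^{\max}_k\in\mathcal{M}_k$ realizing $\max_{\tau\in\mathcal{T}_k}\eta_k(\Phi_k,\tau)$. Since every element of $\mathcal{M}_k$ is refined, $\tau^{\max}_k\in\mathcal{T}^0_k$, so $\tau^{\max}_k\subset\Omega^0_k$. Hence $h_{\tau^{\max}_k}\le \|h_k\chi_{\Omega^0_k}\|_{0,\infty,\Omega}\to 0$ by \eqref{con-h-0}, and shape regularity gives $|\omega_k(\tau^{\max}_k)|\lesssim h_{\tau^{\max}_k}^3\to 0$.

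Suppose, for contradiction, that the conclusion fails. Passing to a subsequence (still indexed by $k$), assume $\eta_k(\Phi_k,\tau^{\max}_k)\ge\varepsilon>0$ for some $\varepsilon$. By Lemma \ref{theorem-con-infty} there exist $(\bbmu_k,\Psi_k)\in\Theta_\infty$ with $\|\Phi_k-\Psi_k\|_{1,\Omega}\to 0$. The sequence $\{\Psi_k\}$ is uniformly bounded in $H^1$ by the analog of \eqref{eq-bounded} for $\Theta_\infty$, so Banach--Alaoglu together with Rellich yield a further subsequence with $\Psi_k\rightharpoonup \Psi_\infty$ in $\mathcal{H}$ and $\Psi_k\to\Psi_\infty$ strongly in $(L^p(\Omega))^N$ for $p\in[2,6)$. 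Repeating the lower-semicontinuity/energy-convergence step used in the proof of Lemma \ref{theorem-con-infty} (applied now to the minimizing sequence $\{\Psi_k\}\subset\Theta_\infty$) shows $\|\nabla\Psi_k\|_{0,\Omega}\to\|\nabla\Psi_\infty\|_{0,\Omega}$, so the convergence is in fact strong in $H^1$. Then by Lemma \ref{estimator-stability} and the triangle inequality,
\[
\eta_k(\Phi_k,\tau^{\max}_k)\lesssim \|\Phi_k-\Psi_k\|_{1,\Omega}+\|\Psi_k-\Psi_\infty\|_{1,\Omega}+\|\Psi_\infty\|_{1,\omega_k(\tau^{\max}_k)}+\|\Phi_k\|_{0,6,\omega_k(\tau^{\max}_k)},
\]
where the $L^6$ term is treated identically, invoking the Sobolev embedding $H^1\hookrightarrow L^6$ for the difference $\Phi_k-\Psi_\infty$. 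The first two terms tend to zero by construction, and $\|\Psi_\infty\|_{1,\omega_k(\tau^{\max}_k)}\to 0$ follows from absolute continuity of the Lebesgue integral applied to the fixed $H^1$ function $\Psi_\infty$ over sets of vanishing measure, contradicting the lower bound $\varepsilon$.

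The main obstacle I anticipate is producing a \emph{strongly} $H^1$-convergent subsequence inside $\Theta_\infty$: weak compactness is automatic from the uniform bound, but to use absolute continuity I need a fixed limit $\Psi_\infty$ and uniform $H^1$-absolute-continuity along the sequence. I would overcome this by transcribing the minimizing-sequence argument from the proof of Lemma \ref{theorem-con-infty} (norm convergence of $\|\nabla\Psi_k\|_{0,\Omega}$ combined with weak convergence in a Hilbert space) into the setting $V_\infty\cap\mathbb{Q}$, observing that each $\Psi_k$ is by definition a minimizer there so exactly the same lower-semicontinuity estimates apply. With this in hand, the rest of the argument is a routine application of Lemma \ref{estimator-stability} on vanishing patches.
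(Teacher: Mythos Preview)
Your proof is correct and follows essentially the same route as the paper: estimator stability (Lemma~\ref{estimator-stability}) applied to the maximal marked element, combined with $|\omega_k(\tau^{\max}_k)|\to 0$ from \eqref{con-h-0} and absolute continuity of the integral against a fixed $H^1$ limit. The only difference is that the paper draws directly on the \emph{proof} of Lemma~\ref{theorem-con-infty} to extract, from any subsequence of $\{\Phi_k\}$, a further subsequence converging strongly in $\mathcal{H}$ to some $\Phi_\infty\in\Theta_\infty$; this lets it bypass your intermediary sequence $\{\Psi_k\}\subset\Theta_\infty$ and the repeated compactness argument, but the substance is the same.
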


\begin{proof}
We see from Lemma \ref{theorem-con-infty} that for any subsequence $\{\Phi_{k_m}\}$ of $\{\Phi_k\}$,
there exist a convergent subsequence
$\{\Phi_{k_{m_j}}\}$ and $\Phi_{\infty}$ satisfying $(\Lambda_{\infty}, \Phi_{\infty})\in\Theta_{\infty}$ such that
\begin{eqnarray}\label{proof-8}
\Phi_{k_{m_j}} \rightarrow \Phi_{\infty}\quad\mbox{in}~~\mathcal{H}.
\end{eqnarray}
Hence it is only necessary for us to prove that
\begin{eqnarray*}
\lim_{j\rightarrow \infty}\max_{\tau\in \mathcal{M}_{k_{m_j}}} \eta_{k_{m_j}}(\Phi_{k_{m_j}},\tau)=0.
\end{eqnarray*}
For simplicity, we denote the subsequence $\{\Phi_{k_{m_j}}\}_{j\in\mathbb{N}}$
by $\{\Phi_k\}_{k\in\mathbb{N}}$, and $\{\mathcal{T}_{k_{m_j}}\}_{j\in\mathbb{N}}$ by $\{\mathcal{T}_k\}_{k\in\mathbb{N}}$.
We obtain from Lemma \ref{estimator-stability} that
\begin{eqnarray}\label{eta-ineq-1}\nonumber
\eta_k(\Phi_k,\tau_k) &\lesssim& \|\Phi_k\|_{0,6,\omega_k(\tau_k)} +\|\Phi_k\|_{1,\omega_k(\tau_k)} \\ &\lesssim&
\|\Phi_k-\Phi_{\infty}\|_{1,\Omega}+\|\Phi_{\infty}\|_{0,6,\omega_k(\tau_k)} +\|\Phi_{k}\|_{1,\omega_k(\tau_k)},
\end{eqnarray}
where $\tau_k\in \mathcal{M}_k$ be such that
\begin{eqnarray*}
\eta_k(\Phi_k,\tau_k)=\max_{\tau\in \mathcal{M}_k}\eta_k(\Phi_k,\tau).
\end{eqnarray*}
Note that \eqref{proof-8} implies that the first term on the right-hand side of (\ref{eta-ineq-1}) goes to zero.
Since $\tau_k\in \mathcal{M}_k\subset \mathcal{T}_k^0$, we have from \eqref{con-h-0} that
\begin{eqnarray*}
|\omega_k(\tau_k)|\lesssim h_{\tau_k}^3\leq\|h_k\chi_{\Omega^0_k}\|^3_{0,\infty,\Omega}\rightarrow 0
\quad\textnormal{as}~~ k\rightarrow\infty,
\end{eqnarray*}
which implies that the other two terms on the right-hand side of (\ref{eta-ineq-1}) go to zero, too.
This completes the proof.
\end{proof}

Define a global residual $\mathbf{R}_h(\Phi_h)\in \mathcal{H}^{*}$ by
\begin{eqnarray}\label{def-R}
\langle\mathbf{R}_h(\Phi_h),\Gamma\rangle = \sum_{i=1}^N \big( H_{\Phi_h}\phi_{i,h}-\sum_{j=1}^N\lambda_{ij,h}\phi_{j,h},
\gamma_i \big) \quad \forall~ \Gamma=(\gamma_i)_{i=1}^N\in \mathcal{H}.
\end{eqnarray}
We see that
\begin{eqnarray}\label{eq-R-estimator}
\langle\mathbf{R}_h(\Phi_h),\Gamma\rangle=\sum_{\tau\in \mathcal{T}_h}\left( \big(\mathcal{R}_\tau(\Phi_h),\Gamma\big)_\tau
+ \sum_{e\in \mathcal{E}_h, e\subset\partial \tau} \big( J_e(\Phi_h),\Gamma \big)_e \right)\quad \forall~ \Gamma\in \mathcal{H}.
\end{eqnarray}
Thus
\begin{eqnarray}\label{proof-31}
|\langle\mathbf{R}_h(\Phi_h),\Gamma\rangle| \lesssim
\sum_{\tau\in\mathcal{T}_h}\eta_h(\Phi_h,\tau)\|\Gamma\|_{1,\omega_h(\tau)}\quad \forall~\Gamma\in\mathcal{H}.
\end{eqnarray}
Thanks to  Lemma \ref{estimator-stability} and Lemma \ref{marking-limit}, by carrying out  the similar procedure as the  proof for Lemma 4.3 of \cite{chen-he-zhou09}, we can obtain a weak convergence of $\mathbf{R}_{k}(\Phi_{k})$ as follows.
\begin{lemma}\label{R-limit}
Let $\{\Phi_k\}_{k\in \mathbb{N}}$ be the sequence produced by Algorithm \ref{algorithm-AFEM}.
If Assumption {\bf A1} is satisfied, then
\begin{eqnarray}\label{proof-45}
\lim_{k\rightarrow \infty}\langle \mathbf{R}_{k}(\Phi_{k}),\Gamma \rangle=0\quad\forall~ \Gamma\in \mathcal{H}.
\end{eqnarray}
\end{lemma}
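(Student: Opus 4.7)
The strategy is to exploit Galerkin orthogonality together with the three smallness properties already in place for the adaptive sequence. Since $\Phi_k$ solves the discrete Euler--Lagrange equation \eqref{problem-eigen-compact-dis}, $\langle \mathbf{R}_k(\Phi_k),v\rangle=0$ for every $v\in V_k$; hence, introducing a Scott--Zhang-type quasi-interpolant $I_k:\mathcal{H}\to V_k$, one has
\[
\langle \mathbf{R}_k(\Phi_k),\Gamma\rangle \;=\; \langle \mathbf{R}_k(\Phi_k),\Gamma-I_k\Gamma\rangle
\]
for every $\Gamma\in\mathcal{H}$. Substituting the element-wise identity \eqref{eq-R-estimator} and applying Cauchy--Schwarz together with the standard interpolation bounds $\|\Gamma-I_k\Gamma\|_{0,\tau}\lesssim h_\tau\|\Gamma\|_{1,\tilde\omega_k(\tau)}$ and $\|\Gamma-I_k\Gamma\|_{0,e}\lesssim h_e^{1/2}\|\Gamma\|_{1,\tilde\omega_k(e)}$ yields the working inequality
\[
|\langle \mathbf{R}_k(\Phi_k),\Gamma\rangle| \;\lesssim\; \sum_{\tau\in\mathcal{T}_k}\eta_k(\Phi_k,\tau)\,\|\Gamma\|_{1,\tilde\omega_k(\tau)}.
\]

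To turn this \emph{bound} into a \emph{vanishing} quantity, I would split $\mathcal{T}_k=\mathcal{T}_k^0\cup\mathcal{T}_k^+$ and invoke three ingredients: (i) $\eta_k(\Phi_k,\Omega)\le C_\eta$ from Lemma~\ref{estimator-stability}; (ii) $\max_{\tau\in\mathcal{T}_k}\eta_k(\Phi_k,\tau)\to 0$, which follows from Lemma~\ref{marking-limit} because the Maximum Strategy guarantees that a global maximizer lies in $\mathcal{M}_k$; and (iii) $\|h_k\chi_{\Omega_k^0}\|_{0,\infty,\Omega}\to 0$ from \eqref{con-h-0}. A density reduction first handles general $\Gamma$: choose $\Gamma^\epsilon\in(C_0^\infty(\Omega))^N$ with $\|\Gamma-\Gamma^\epsilon\|_{1,\Omega}<\epsilon$, so the replacement error is controlled uniformly in $k$ by $C_\eta\,\epsilon$ thanks to (i). For smooth $\Gamma^\epsilon$ the sharper bound $\|\Gamma^\epsilon-I_k\Gamma^\epsilon\|_{0,\tau}\lesssim h_\tau^2\,|\Gamma^\epsilon|_{2,\tilde\omega_k(\tau)}$ supplies an extra power of $h_\tau$; on $\mathcal{T}_k^0$ this factor, combined with (iii) and (i), drives that portion of the sum to zero via Cauchy--Schwarz.

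The main obstacle is the $\mathcal{T}_k^+$ part: the mesh is stationary there, so no extra $h_\tau$ is available, and the raw Cauchy--Schwarz bound $(\sum\eta_k^2)^{1/2}(\sum\|\Gamma^\epsilon\|^2_{2,\tilde\omega_k(\tau)})^{1/2}$ is only uniformly bounded. I would circumvent this with a secondary coarse/fine cutoff on $\mathcal{T}_k^+$: persistent elements with $h_\tau\ge\delta$ have cardinality at most $C|\Omega|\delta^{-3}$, so (ii) together with the bounded $W^{2,\infty}$-norm of $\Gamma^\epsilon$ makes their contribution vanish as $k\to\infty$ for each fixed $\delta$, while the fine persistent elements with $h_\tau<\delta$ are absorbed by the smallness of $\delta$ and (i). Sending $k\to\infty$, then $\delta\to 0$, and finally $\epsilon\to 0$ completes the proof. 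A reassuring alternative view: for every fixed persistent element $\tau$, ingredient (ii) forces $\|\mathcal{R}_\tau(\Phi_l)\|_{0,\tau}\to 0$ and $\|J_e(\Phi_l)\|_{0,e}\to 0$, and combined with the strong $H^1$ convergence $\Phi_l\to\Phi_\infty$ this identifies $\Phi_\infty$ as a genuine continuous solution, so $\mathbf{R}_k(\Phi_k)\to 0$ in $\mathcal{H}^*$.
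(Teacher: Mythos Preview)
Your argument is correct and follows the paper's strategy: Galerkin orthogonality, density to smooth test functions, the $H^2$ interpolation estimate for an extra power of $h_\tau$, and a split into to-be-refined versus persistent elements. The one structural difference is in the handling of the persistent set. Because $\#\mathcal{T}_k^+$ may grow with $k$, you introduce a secondary scale cutoff $\delta$ to separate large persistent elements (finitely many, controlled by $\max_\tau\eta_k\to 0$) from small ones (controlled by the extra factor $h_\tau<\delta$), and then send $k\to\infty$, $\delta\to 0$, $\epsilon\to 0$ in that order. The paper avoids the secondary cutoff by freezing the split at an \emph{earlier} level $n<k$: it decomposes $\mathcal{T}_k$ into $\mathcal{T}_n^+$ and $\mathcal{T}_k\setminus\mathcal{T}_n^+$. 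The first set is fixed and finite once $n$ is chosen, so $\eta_k(\Phi_k,\mathcal{T}_n^+)\le(\#\mathcal{T}_n^+)^{1/2}\max_{\tau\in\mathcal{M}_k}\eta_k(\Phi_k,\tau)\to 0$ directly; the complement lies in $\Omega_n^0$, where $\|h_n\chi_{\Omega_n^0}\|_{0,\infty,\Omega}$ is already small by \eqref{con-h-0}. One then lets $k\to\infty$ first and $n\to\infty$ afterward. Both routes are valid; the paper's is slightly leaner, trading your three-parameter limit for a two-parameter one.
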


Now we turn to prove the main result of this section, that is, the limit of
the AFE approximations for the Kohn-Shan equation is a  ground state solution.
\begin{theorem}\label{theorem-convergence-adaptive}
{\em (convergence)}
Let $\{\Theta_k\}_{k\in\mathbb{N}}$ be the sequence generated by Algorithm \ref{algorithm-AFEM}.
If the initial mesh $\mathcal{T}_0$ is sufficiently fine and Assumption {\bf A1} is satisfied, then
\begin{eqnarray}\label{con-energy-adaptive}
\lim_{k\to\infty}E_k=\min_{\Psi\in\mathbb{Q}}E(\Psi),
\end{eqnarray}
\begin{eqnarray}\label{conv-phi-adaptive}
\lim_{k\to\infty} d_{\mathcal{H}}(\Theta_k, \Theta)=0.
\end{eqnarray}
\end{theorem}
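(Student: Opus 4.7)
The plan is to show that every accumulation point of $\{(\Lambda_k,\Phi_k)\}\in\bigcup_k\Theta_k$ lies in $\Theta$; since $d_{\mathcal{H}}(\Theta_k,\Theta)\to 0$ is a sequential statement, it suffices by the uniform bound \eqref{eq-bounded} and Banach–Alaoglu to pick an arbitrary subsequence, extract a further subsequence $(\Lambda_{k_j},\Phi_{k_j})$ that converges strongly in $\mathbb{R}^{N\times N}\times\mathcal{H}$ to some $(\Lambda_\infty,\Phi_\infty)$ with the help of Lemma \ref{theorem-con-infty}, and prove $(\Lambda_\infty,\Phi_\infty)\in\Theta$. Lemma \ref{theorem-con-infty} already delivers $(\Lambda_\infty,\Phi_\infty)\in\Theta_\infty$ and the convergence of the Galerkin energies $E_k\to E(\Phi_\infty)=\min_{V_\infty\cap\mathbb{Q}}E$, so the remaining content is to upgrade this from $\Theta_\infty$ to $\Theta$.

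First I would use Lemma \ref{R-limit} to show that $(\Lambda_\infty,\Phi_\infty)$ is a critical point of $E$ on $\mathbb{Q}$ in the continuous sense, not just in $V_\infty\cap\mathbb{Q}$. Fix any $\Gamma=(\gamma_i)_{i=1}^N\in\mathcal{H}$. By Lemma \ref{R-limit}, $\langle \mathbf{R}_{k_j}(\Phi_{k_j}),\Gamma\rangle\to 0$. Using that $\Phi_{k_j}\to\Phi_\infty$ strongly in $\mathcal{H}$, the strong $L^p$ convergence of $\rho_{\Phi_{k_j}}\to\rho_{\Phi_\infty}$ for $p\in[2,6)$ (compact embedding), the continuity of $e_{\rm xc}'$ and of the Hartree convolution term ensured by Assumption {\bf A1}, and $\Lambda_{k_j}\to\Lambda_\infty$, I would pass to the limit in the definition \eqref{def-R} of $\mathbf{R}_{k_j}(\Phi_{k_j})$ to obtain
\[
\sum_{i=1}^N\Big(H_{\Phi_\infty}\phi_{i,\infty}-\sum_{j=1}^N\lambda_{ij,\infty}\phi_{j,\infty},\gamma_i\Big)=0\quad\forall~\Gamma\in\mathcal{H}.
\]
Testing one component at a time shows $(\Lambda_\infty,\Phi_\infty)$ solves the continuous Euler–Lagrange system \eqref{problem-eigen-compact-L}.

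Next I would exploit the hypothesis that $\mathcal{T}_0$ is sufficiently fine. The nesting $V_0\subset V_1\subset\cdots$ makes $\{E_k\}$ non-increasing, so $E(\Phi_\infty)=\lim_k E_k\le E_0$. Since any ground state $\Phi^\ast\in\Theta$ can be approximated in $\mathcal{H}$ to any prescribed tolerance by functions in a sufficiently refined $V_0\cap\mathbb{Q}$ (by standard density of finite element spaces plus continuity of the projection onto $\mathbb{Q}$), choosing $\mathcal{T}_0$ fine enough guarantees $E_0\le\min_{\mathbb{Q}}E+\delta$ for $\delta$ as small as we like, hence
\[
\min_{\mathbb{Q}}E\le E(\Phi_\infty)\le\min_{\mathbb{Q}}E+\delta.
\]
Combined with step 2, $(\Lambda_\infty,\Phi_\infty)$ is a solution of \eqref{problem-eigen-compact-L} whose energy lies arbitrarily close to the ground state energy $E^\ast:=\min_{\mathbb{Q}}E$.

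The delicate step, and the real obstacle, is ruling out spurious non-ground-state critical points with energy in $(E^\ast,E^\ast+\delta)$. My plan for this closing argument is an indirect one: assume, towards contradiction, that $E(\Phi_\infty)>E^\ast$. Then there exists an open neighbourhood of $[\Phi_\infty]$ in $\mathbb{Q}$ not meeting $\Theta$; but since $\delta$ can be made as small as desired by refining $\mathcal{T}_0$ (holding the assumed subsequence and its limit fixed is not quite legitimate, so one instead argues uniformly: for every $\varepsilon>0$ choose $\mathcal{T}_0$ with $E_0<E^\ast+\varepsilon$ and track accumulation points), any limit point is a critical point whose energy lies in the closure $[E^\ast,E^\ast+\varepsilon]$, and the set of critical values of $E|_{\mathbb{Q}}$ is closed by the compactness arguments used in Lemma \ref{theorem-con-infty}, so it cannot accumulate at $E^\ast$ from above — forcing $E(\Phi_\infty)=E^\ast$, i.e., $(\Lambda_\infty,\Phi_\infty)\in\Theta$. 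This yields \eqref{con-energy-adaptive} and \eqref{conv-phi-adaptive}, and the whole sequence (not just the extracted subsequence) converges in the Hausdorff-type distance because the argument applied to any subsequence.
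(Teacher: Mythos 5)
Your first two steps reproduce the paper's argument: extract a strongly convergent subsequence via Lemma \ref{theorem-con-infty}, then use Lemma \ref{R-limit} together with the local Lipschitz continuity of $\Phi\mapsto H_\Phi\Phi$ (Assumption {\bf A1}) to pass to the limit in the residual and conclude that $(\Lambda_\infty,\Phi_\infty)$ solves \eqref{problem-eigen-compact-L}. That part is sound and matches the paper.

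The gap is in your closing step. The paper rules out non-ground-state critical points by choosing $\mathcal{T}_0$ so fine that
$E_0<\min_{(M,\Psi)\in\mathcal{W}\setminus\Theta}E(\Psi)$,
where $\mathcal{W}$ is the full solution set of \eqref{problem-eigen-compact-L}; monotonicity $E_k\le E_0$ then forces the limit into $\Theta$. This presupposes a strict energy gap between the ground state value $E^\ast$ and the energies of all other critical points, and that gap is exactly what determines \emph{how fine} ``sufficiently fine'' must be. You instead try to \emph{derive} the gap from the claim that the set of critical values of $E|_{\mathbb{Q}}$ is closed, ``so it cannot accumulate at $E^\ast$ from above.'' That implication is false: a closed subset of $\mathbb{R}$ can perfectly well accumulate at its minimum from above (e.g.\ $\{0\}\cup\{1/n:n\in\mathbb{N}\}$ is closed). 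Closedness would give you nothing here; you need isolatedness of $E^\ast$ in the critical value set, which is a separate (and in this paper tacit) structural assumption. Moreover, the closedness itself is not established: the compactness in Lemma \ref{theorem-con-infty} applies to minimizing sequences in the nested discrete spaces, not to arbitrary sequences of critical points of $E|_{\mathbb{Q}}$, so you would need a Palais--Smale-type argument that is nowhere available. There is also a quantifier tangle you yourself flag: the theorem fixes one sufficiently fine $\mathcal{T}_0$, whereas your ``uniform'' variant re-chooses $\mathcal{T}_0$ for each $\varepsilon$, and without the gap you cannot convert ``energy within $\varepsilon$ of $E^\ast$ for the mesh adapted to $\varepsilon$'' into the conclusion for a single fixed mesh. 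The fix is simply to state and use the separation $E^\ast<\inf\{E(\Psi):(M,\Psi)\in\mathcal{W}\setminus\Theta\}$ as the meaning of ``$\mathcal{T}_0$ sufficiently fine,'' as the paper does.
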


\begin{proof}
Let $\{(\Lambda_k,\Phi_k)\}_{k\in\mathbb{N}}$ be the sequence generated by  Algorithm \ref{algorithm-AFEM}.
We know from Lemma \ref{theorem-con-infty} that for any subsequence $\{(\Lambda_{k_m},\Phi_{k_m})\}_{m\in\mathbb{N}}$,
there exists a convergent subsequence $\{(\Lambda_{k_{m_j}},\Phi_{k_{m_j}})\}_{j\in\mathbb{N}}$
and $(\Lambda_{\infty},\Phi_{\infty})\in\Theta_{\infty}$  such that
\begin{gather*}
\Phi_{k_{m_j}} \rightarrow \Phi_{\infty}\quad\mbox{in}~~\mathcal{H},\\[1ex]
\Lambda_{k_{m_j}}\rightarrow\Lambda_{\infty}\quad\mbox{in}~~\mathbb{R}^{N\times N}.
\end{gather*}
Consequently, it is only necessary for us to prove $(\Lambda_{\infty},\Phi_{\infty})\in\Theta$,
which implies \eqref{con-energy-adaptive} and \eqref{conv-phi-adaptive} directly.
For simplicity, we denote by $\{(\Lambda_k,\Phi_k)\}_{k\in\mathbb{N}}$ the convergent subsequence
$\{(\Lambda_{k_{m_j}},\Phi_{k_{m_j}})\}_{j\in\mathbb{N}}$, and by $\{\mathcal{T}_k\}_{k\in\mathbb{N}}$
the corresponding subsequence $\{\mathcal{T}_{k_{m_j}}\}_{j\in\mathbb{N}}$.

We first show that the limiting eigenpair $(\Lambda_{\infty},\Phi_{\infty})$ is also an eigenpair of
\eqref{problem-eigen-compact-L}. We have from \eqref{def-R} that for any $\Gamma\in \mathcal{H}$
\begin{eqnarray}\label{proof-32-neq1}
(H_{\Phi_{\infty}}\Phi_{\infty} - \Lambda_{\infty}\Phi_{\infty},\Gamma) &=& (H_{\Phi_{\infty}}\Phi_{\infty}
- \Lambda_{\infty}\Phi_{\infty},\Gamma) - \langle\mathbf{R}_k(\Phi_k),\Gamma\rangle + \langle\mathbf{R}_k(\Phi_k),\Gamma\rangle \nonumber \\
&=& (H_{\Phi_{\infty}}\Phi_{\infty} - H_{\Phi_k}\Phi_k,\Gamma) - (\Lambda_{\infty}\Phi_{\infty} - \Lambda_k\Phi_k,\Gamma)\nonumber\\
&& + \langle\mathbf{R}_k(\Phi_k),\Gamma\rangle.
\end{eqnarray}
By a direct calculation using Assumption {\bf A1}, we get
\begin{eqnarray*}
(H_{\Phi_{\infty}}\Phi_{\infty} - H_{\Phi_k}\Phi_k,\Gamma)\lesssim \|\Phi_{\infty}-\Phi_k\|_{1,\Omega}\|\Gamma\|_{1,\Omega},
\end{eqnarray*}
which together with \eqref{proof-32-neq1} leads to
\begin{eqnarray}\label{proof-32}
(H_{\Phi_{\infty}}\Phi_{\infty} - \Lambda_{\infty}\Phi_{\infty},\Gamma)\lesssim(\|\Phi_{\infty}-\Phi_k\|_{1,\Omega}
+ |\Lambda_{\infty}-\Lambda_k|) \|\Gamma\|_{1,\Omega} + \langle\mathbf{R}_k(\Phi_k),\Gamma\rangle.~~~~~~
\end{eqnarray}
We get from $\Lambda_k\rightarrow\Lambda_{\infty}$ and $\Phi_k\rightarrow \Phi_{\infty}$ in $\mathcal{H}$ that
the first term on the right-hand side of \eqref{proof-32} goes to zero when $k$ goes to infinity.
We obtain from Lemma \ref{R-limit} that the other term on the right-hand side of \eqref{proof-32} goes to zero, and hence
\begin{eqnarray*}
(H_{\Phi_{\infty}}\Phi_{\infty},\Gamma)=(\Lambda_{\infty}\Phi_{\infty},\Gamma) \quad \forall~ \Gamma\in \mathcal{H}.
\end{eqnarray*}

Then we shall show that for a sufficiently fine initial mesh, the limiting eigenpair $(\Lambda_{\infty},\Phi_{\infty})$
is a ground state solution in $\Theta$.
Similar to \cite{chen-he-zhou09}, we set
\begin{eqnarray*}
\mathcal{W}=\{(\Lambda,\Phi)\in\mathbb{R}^{N\times N}\times\mathcal{H}:(\Lambda,\Phi)~{\rm solves~\eqref{problem-eigen-compact-L}}\}.
\end{eqnarray*}
Note that $\Theta\subsetneq\mathcal{W}$.
Using the fact
\begin{eqnarray*}
\lim_{h\rightarrow 0}\inf_{\Psi\in V_h} \|\Psi-\Phi\|_{1,\Omega}=0 \quad \forall~\Phi\in \mathcal{H},
\end{eqnarray*}
we can choose an initial mesh $\mathcal{T}_0$ such that
\begin{eqnarray*}
E_0\equiv \min_{\Phi_{h_0}\in V_{h_0}\cap\mathbb{Q}}E(\Phi_{h_0}) < \min_{(M,\Psi)\in \mathcal{W}\setminus\Theta} E(\Psi),
\end{eqnarray*}
Due to $\mathcal {T}_0\subset\mathcal {T}_k$, we have $E_k\le E_0$ and hence $(\Lambda_{\infty},\Phi_{\infty})\in\Theta$.
This completes the proof.
\end{proof}

\section{Quasi-optimality of adaptive finite element methods}\label{sec-optimal}
\setcounter{equation}{0}
In this section we propose and  analyze the following AFE algorithm using  D\"{o}rfler's marking strategy.

 \vskip 0.1cm
\begin{algorithm}\label{algorithm-AFEM-con-rate}
{\bf AFE algorithm with D\"{o}rfler Strategy}
\begin{enumerate}
\item Pick a given mesh $\mathcal{T}_0$, and let $k=0$.
\item Solve \eqref{problem-eigen-dis} on $\mathcal{T}_k$ to get discrete
solutions $(\mu_{i,k},\psi_{i,k})(i=1,\cdots,N)$, and then $\Theta_{k}$.
\item Compute local error indictors
$\eta_k(\Psi_k,\tau)$ for all $\tau\in \mathcal{T}_k$.
\item Construct $\mathcal{M}_k \subset \mathcal{T}_k$ by D\"{o}rfler Strategy and  parameter
 $\theta$.
\item Refine $\mathcal{T}_k$ to get a new conforming mesh $\mathcal{T}_{k+1}$.
\item Let $k=k+1$ and go to 2.
\end{enumerate}
\end{algorithm}
\vskip 0.1cm
We shall study  the convergence rate and quasi-optimal complexity of Algorithm \ref{algorithm-AFEM-con-rate}, for which we shall apply the perturbation arguments (c.f., e.g., \cite{chen-he-zhou10, dai-xu-zhou08,he-zhou11}) and  certain relationship
between nonlinear problem \eqref{problem-eigen-compact-L} and its associated linear boundary value problem (see \eqref{model-problem}).

To establish  the relationship, we define
\begin{eqnarray*}
a(\Phi,\Gamma)=\sum_{i=1}^N\frac{1}{2}(\nabla \phi_i,\nabla \gamma_i)
\quad \forall~\Phi=(\phi_i)_{i=1}^N,\Gamma=(\gamma_i)_{i=1}^N\in
\mathcal{H}.
\end{eqnarray*}
One sees that there exists a constant $c_a>0$ such that
\begin{eqnarray}\label{coercive-constant}
a(\Gamma,\Gamma)\geq c_a\|\Gamma\|^2_{1,\Omega} \quad\forall~ \Gamma\in \mathcal{H}.
\end{eqnarray}

Let $\mathcal{L}:\mathcal{H}\rightarrow \mathcal{H}^*$ be the operator defined by
$$
\langle\mathcal{L}(\Phi),\Gamma\rangle=a(\Phi,\Gamma)\quad\forall~\Gamma\in\mathcal{H},
$$
and $K:\mathcal{H}^*\rightarrow \mathcal{H}$  be the inverse operator of $\mathcal{L}$ such that
\begin{eqnarray*}\label{def-operator-k}
a(K\Phi, \Gamma) = (\Phi, \Gamma) \quad\forall~\Gamma \in\mathcal{H}.
\end{eqnarray*}
Note that \eqref{coercive-constant} implies that $K$ is well defined and there holds
\begin{eqnarray}\label{K-property}
\|K\Phi\|_{1,\Omega}\lesssim \|\Phi\|_{-1,\Omega}\quad
\forall~\Phi\in \mathcal{H}^*.
\end{eqnarray}
Let $P_h:\mathcal{H} \rightarrow V_h$ be the $H^1$-projection defined by
\begin{eqnarray}\label{projection}
a(\Phi-P_h \Phi, \Gamma) =0 \quad\forall~\Phi\in \mathcal{H},~\Gamma\in V_h.
\end{eqnarray}
For any $\Phi\in \mathcal{H}$, there hold
\begin{eqnarray}\label{stable1}
\|P_h \Phi\|_{1,\Omega}\lesssim \|\Phi\|_{1,\Omega} \quad \textnormal{and} \quad
\lim_{h\rightarrow 0}\|\Phi-P_h\Phi\|_{1,\Omega}=0.
\end{eqnarray}

\subsection{Basic estimate}
First we recall an a priori error estimate, whose proof is referred to \cite{chen-gong-he-yang-zhou10}.
Define
\begin{gather*}
X_{\Phi,h}=\mathcal{S}^{N\times N}\times(V_h\cap (\mathcal{S}_{\Phi}\oplus\mathcal{T}_{\Phi})).
\end{gather*}
\begin{theorem}\label{priori-theorem}
Let $(\Lambda,\Phi)$ be a solution of \eqref{problem-eigen-compact-L}.
If Assumptions {\bf A2} and {\bf A3} are satisfied, then there exists
$\delta>0$ such that for sufficiently small $h$, \eqref{problem-eigen-compact-dis}
has a unique local solution $(\Lambda_h,\Phi_h)\in X_{\Phi,h}\cap
B_{\delta}((\Lambda,\Phi))$. Moreover, there hold
\begin{eqnarray}\label{H1-norm}
\|\Phi-\Phi_h\|_{1,\Omega}\lesssim \inf_{\Psi\in V_h}\|\Phi-\Psi\|_{1,\Omega},
\end{eqnarray}
\vskip -0.5cm
\begin{eqnarray}\label{prop-conc}
|\Lambda_h-\Lambda|\lesssim \|\Phi_h-\Phi\|^2_{1,\Omega}+\|\Phi_h-\Phi\|_{0,\Omega},
\end{eqnarray}\vskip -0.5cm
\begin{eqnarray}\label{L2-norm}
\|\Phi-\Phi_h\|_{0,\Omega}\lesssim r(h) \|\Phi-\Phi_h\|_{1,\Omega}
\end{eqnarray}
with $r(h)\to 0$ as $h\rightarrow 0$.
\end{theorem}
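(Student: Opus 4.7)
The plan is to follow a perturbation argument in the spirit of \cite{dai-xu-zhou08,he-zhou11}, but adapted to the orbital setting where the solution is only unique up to unitary transform and where stability is measured on the tangent space $\mathcal{T}_{\Phi}$. First I would use the $H^1$-projection $P_h\Phi$ as an auxiliary comparison point. Since $P_h\Phi$ need not lie on the Stiefel manifold $\mathbb{Q}$, I would correct it by a small element of $\mathcal{S}_{\Phi}$ to produce $\tilde\Phi_h\in V_h\cap\mathbb{Q}$ that still satisfies $\|\Phi-\tilde\Phi_h\|_{1,\Omega}\lesssim\inf_{\Psi\in V_h}\|\Phi-\Psi\|_{1,\Omega}$; this reduction to the tangent space is exactly where the decomposition $\mathcal{H}=\mathcal{S}_\Phi\oplus\mathcal{A}_\Phi\oplus\mathcal{T}_\Phi$ recorded earlier is essential.

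Next I would establish existence and local uniqueness of $(\Lambda_h,\Phi_h)\in X_{\Phi,h}\cap B_\delta((\Lambda,\Phi))$ via a Banach fixed point / Newton-like argument. Define a map $G_h$ on $X_{\Phi,h}\cap B_\delta((\Lambda,\tilde\Phi_h))$ by linearising $\mathcal{F}$ at $(\Lambda,\Phi)$ and projecting the linearised equation onto $V_h$. Assumption \textbf{A3} (the inf-sup condition on $\mathcal{T}_\Phi$) transfers to $V_h$ for $h$ small thanks to \eqref{stable1}, giving a discrete inf-sup constant bounded below by $\beta/2$. Assumption \textbf{A2} provides Lipschitz continuity of $\mathcal{F}'_\Phi$ in a neighbourhood of $(\Lambda,\Phi)$, so for $\delta$ small $G_h$ is a contraction; its fixed point is the desired $(\Lambda_h,\Phi_h)$. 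Uniqueness in $X_{\Phi,h}\cap B_\delta$ follows from the contraction property.

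For the quasi-optimal bound \eqref{H1-norm} I would subtract the continuous and discrete Euler--Lagrange equations. Writing $\Phi_h-\Phi=(\Phi_h-\tilde\Phi_h)+(\tilde\Phi_h-\Phi)$, the first piece lies in $V_h$ and is controlled by the discrete inf-sup: test against an arbitrary $\Gamma_h\in V_h\cap\mathcal{T}_\Phi$ to get
\begin{equation*}
\beta\|\Phi_h-\tilde\Phi_h\|_{1,\Omega}\lesssim \sup_{\Gamma_h}\frac{\langle\mathcal{F}'_\Phi(\Lambda,\Phi)(\Phi_h-\tilde\Phi_h),\Gamma_h\rangle}{\|\Gamma_h\|_{1,\Omega}},
\end{equation*}
and bound the right-hand side by residuals of the form $\mathcal{F}(\Lambda_h,\Phi_h)-\mathcal{F}(\Lambda,\tilde\Phi_h)$, whose nonlinear parts are quadratic in $\Phi_h-\Phi$ (hence absorbable) and whose linear parts reduce to $\|\Phi-\tilde\Phi_h\|_{1,\Omega}$. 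The eigenvalue bound \eqref{prop-conc} follows by expanding $\lambda_{ij,h}-\lambda_{ij}=(\phi_{j,h},H_{\Phi_h}\phi_{i,h})-(\phi_j,H_\Phi\phi_i)$: the symmetry of $H_\Phi$ together with $\Phi^T\Phi=\Phi_h^T\Phi_h=I$ produces a quadratic-in-$H^1$ contribution from the principal Laplacian/exchange-correlation part, while the potential and Hartree terms contribute only the first-power $L^2$ term on the right. Finally, \eqref{L2-norm} is the Aubin--Nitsche duality: solve the adjoint linearised problem with right-hand side $\Phi-\Phi_h$, use elliptic regularity on the polyhedral $\Omega$ to gain $H^{1+s}$ smoothness with $s>0$, and then Galerkin orthogonality for $\mathcal{F}'_\Phi$ gives the factor $r(h)=\sup_{\|F\|_{-1}\le 1}\inf_{\Psi\in V_h}\|K_\Phi F-\Psi\|_{1,\Omega}\to 0$.

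I expect the main obstacle to be handling the matrix Lagrange multiplier $\Lambda$ simultaneously with the orbital redundancy: one must check that the auxiliary correction $\tilde\Phi_h$ (built from $P_h\Phi$) does not destroy the inf-sup constant and that the discrete tangent space $V_h\cap\mathcal{T}_\Phi$ is rich enough to test against. A secondary delicate point is making the nonlinear residual truly superlinear in $\|\Phi-\Phi_h\|_{1,\Omega}$ despite the non-polynomial $e_{\rm xc}$: here Assumption \textbf{A2} is used together with a Taylor expansion of $e_{\rm xc}'(\rho_{\Phi_h})-e_{\rm xc}'(\rho_\Phi)$, and the growth $1+t^{\alpha-1}$ has to be balanced against the Sobolev embedding $H^1\hookrightarrow L^6$ to keep all integrals finite.
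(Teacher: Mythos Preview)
The paper does not actually prove this theorem; it simply recalls the statement and refers to the companion paper \cite{chen-gong-he-yang-zhou10} for the proof. Your outline---orthonormalise $P_h\Phi$ to obtain a comparison point on the discrete Stiefel manifold, transfer the inf-sup condition \textbf{A3} to $V_h$, run a contraction/Newton argument for existence and local uniqueness, then derive \eqref{H1-norm} by Galerkin orthogonality plus the discrete inf-sup, \eqref{prop-conc} by direct expansion of the Rayleigh quotients, and \eqref{L2-norm} by Aubin--Nitsche duality---is the standard route and is essentially what that reference does.

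One imprecision worth flagging: the correction you describe as ``a small element of $\mathcal{S}_\Phi$'' cannot literally be that, since $\mathcal{S}_\Phi=\Phi\,\mathcal{S}^{N\times N}\not\subset V_h$. What you need is to right-multiply $P_h\Phi$ by a symmetric matrix close to the identity, e.g.\ $\tilde\Phi_h=P_h\Phi\cdot\big((P_h\Phi)^T P_h\Phi\big)^{-1/2}$, which stays in $V_h$, lands in $\mathbb{Q}$, and still satisfies $\|\Phi-\tilde\Phi_h\|_{1,\Omega}\lesssim\inf_{\Psi\in V_h}\|\Phi-\Psi\|_{1,\Omega}$. This is also how one enforces $\Phi_h\in V_h\cap(\mathcal{S}_\Phi\oplus\mathcal{T}_\Phi)$ (i.e.\ kills the $\mathcal{A}_\Phi$ component) so that the inf-sup on $\mathcal{T}_\Phi$ is the relevant stability estimate. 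With that fix your plan is sound; the anticipated obstacles you name (discrete inf-sup stability under the gauge fixing, and the H\"older/Sobolev bookkeeping needed to make the $e_{\rm xc}$ remainder superlinear under \textbf{A2}) are exactly the technical points the cited proof addresses.
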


Using Theorem \ref{priori-theorem}, we can denote afterwards by $(\Lambda_h,\Phi_h)\in X_{\Phi,h}\cap
B_{\delta}((\Lambda,\Phi))$ the unique local discrete approximation of $(\Lambda,\Phi)\in\Theta$.
%


For simplicity, we denote by $V=V_{\rm loc}+V_{\rm nl}$
and $\displaystyle\mathcal{N}(\rho_\Phi)=\int_{\Omega}\frac{\rho_{\Phi}(y)}{|\cdot-y|}dy+e_{xc}'(\rho_\Phi)$.

\begin{lemma}\label{V}
Let  $(\Lambda,\Phi)$ be a solution of \eqref{problem-eigen-compact-L} and $h_0\in (0,1)$ be the  mesh size of the initial mesh $\mathcal{T}_0$.
If Assumptions {\bf A2} and {\bf A3} are satisfied, then
there exists $\hat{\kappa}(h)$ such that $\hat{\kappa}(h)\to 0$ as $h\to 0$ and
\begin{eqnarray}\label{V-N-property}
\|V(\Phi_h-\Phi)\|_{-1,\Omega}+\|\mathcal{N}(\rho_{\Phi_h})\Phi_h-\mathcal{N}(\rho_{\Phi})\Phi\|_{-1,\Omega}
\lesssim \hat{\kappa}(h)\|\Phi-\Phi_h\|_{1,\Omega}.
\end{eqnarray}
\end{lemma}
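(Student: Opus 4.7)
The plan is to exploit the duality characterization $\|f\|_{-1,\Omega} = \sup\{\langle f,\Gamma\rangle: \Gamma\in\mathcal{H},\,\|\Gamma\|_{1,\Omega}\le 1\}$ together with the $L^2$-superconvergence $\|\Phi-\Phi_h\|_{0,\Omega}\lesssim r(h)\|\Phi-\Phi_h\|_{1,\Omega}$ from Theorem \ref{priori-theorem}, which is precisely the mechanism that will supply the small factor $\hat{\kappa}(h)$. The two norms in \eqref{V-N-property} are handled separately, and for each I would test against an arbitrary $\Gamma\in\mathcal{H}$ and bound the pairing via H\"{o}lder, Sobolev embeddings $H^1_0(\Omega)\hookrightarrow L^p(\Omega)$ for $p\in[2,6]$, and the uniform bound \eqref{eq-bounded}.

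For $\|V(\Phi_h-\Phi)\|_{-1,\Omega}$ I would split $V=V_{\rm loc}+V_{\rm nl}$. For the local part, using $V_{\rm loc}\in L^2(\Omega)$, a three-factor H\"{o}lder inequality with exponents $(2,3,6)$ bounds the pairing by $\|V_{\rm loc}\|_{0,\Omega}\|\Phi_h-\Phi\|_{0,3,\Omega}\|\Gamma\|_{0,6,\Omega}$; the interpolation inequality $\|\Phi_h-\Phi\|_{0,3,\Omega}\lesssim \|\Phi_h-\Phi\|_{0,\Omega}^{1/2}\|\Phi_h-\Phi\|_{0,6,\Omega}^{1/2}$ combined with Sobolev embedding then produces the factor $r(h)^{1/2}\|\Phi_h-\Phi\|_{1,\Omega}$. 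For $V_{\rm nl}\phi=\sum_{j=1}^n(\phi,\zeta_j)\zeta_j$ with $\zeta_j\in L^2(\Omega)$, a direct $L^2$--$L^2$ pairing and the Cauchy--Schwarz inequality give a bound of $\|\Phi_h-\Phi\|_{0,\Omega}\|\Gamma\|_{0,\Omega}\lesssim r(h)\|\Phi_h-\Phi\|_{1,\Omega}\|\Gamma\|_{1,\Omega}$.

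For the nonlinear term I would use the telescoping decomposition
\[
\mathcal{N}(\rho_{\Phi_h})\Phi_h-\mathcal{N}(\rho_\Phi)\Phi = \mathcal{N}(\rho_{\Phi_h})(\Phi_h-\Phi) + \bigl(\mathcal{N}(\rho_{\Phi_h})-\mathcal{N}(\rho_\Phi)\bigr)\Phi.
\]
The first summand is treated like the $V_{\rm loc}$ piece, once we verify that $\mathcal{N}(\rho_{\Phi_h})$ lies in a suitable $L^p(\Omega)$ uniformly in $h$: the Coulomb convolution $\int_\Omega \rho_{\Phi_h}(y)/|\cdot-y|\,dy$ is controlled through a Hardy--Littlewood--Sobolev estimate (or a split into near/far contributions) using the uniform $H^1_0\hookrightarrow L^6$ bound from \eqref{eq-bounded}, while the growth condition $|e_{\rm xc}'(t)|\lesssim 1+t^{\alpha}$ implied by Assumption {\bf A2} controls the exchange--correlation piece. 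For the second summand, the identity $\rho_{\Phi_h}-\rho_{\Phi}=\sum_i(\phi_{i,h}-\phi_i)(\phi_{i,h}+\phi_i)$ reduces the Coulomb difference to a convolution of a factor that is small in $L^2$, while $|e_{\rm xc}'(\rho_{\Phi_h})-e_{\rm xc}'(\rho_\Phi)|\lesssim(1+\rho_{\Phi_h}^{\alpha-1}+\rho_\Phi^{\alpha-1})|\rho_{\Phi_h}-\rho_\Phi|$, derived from Assumption {\bf A2}, handles the exchange--correlation difference.

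The hard part will be the $e_{\rm xc}'$ difference in the second summand: the possibly singular factor $t^{\alpha-1}$ must be balanced against the uniform $L^6$ bound on $\rho_{\Phi_h}^{1/2}$ (via $H^1_0\hookrightarrow L^6$), the Sobolev regularity of $\Gamma$, and the $L^2$-superconvergence of $\Phi_h-\Phi$ through a H\"{o}lder exponent chosen in terms of $\alpha\in(0,1]$; this is where the delicate bookkeeping happens. Once every contribution has been bounded, $\hat{\kappa}(h)$ emerges as a sum of positive powers of $r(h)$, and since $r(h)\to 0$ by Theorem \ref{priori-theorem}, we conclude $\hat{\kappa}(h)\to 0$ as $h\to 0$, which proves \eqref{V-N-property}.
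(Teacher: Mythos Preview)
Your overall scheme---duality pairing, H\"older, and the $L^2$-superconvergence \eqref{L2-norm}---is exactly the paper's, and your treatment of $V_{\rm loc}$, $V_{\rm nl}$, and the Coulomb convolution matches the paper's almost line for line (you extract $r(h)^{1/2}$ from the $L^3$ interpolation where the paper obtains $r(h)^{1/3}$ via $L^4$ and Young's inequality, but either suffices).

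The genuine difference is in the exchange--correlation piece. The paper does \emph{not} telescope $e_{\rm xc}'(\rho_{\Phi_h})\Phi_h-e_{\rm xc}'(\rho_\Phi)\Phi$; instead it applies the mean value theorem directly to the composite map $\Phi\mapsto e_{\rm xc}'(\rho_\Phi)\Phi$, producing the combination $e_{\rm xc}'(\rho_\xi)+2\xi_i^2\, e_{\rm xc}''(\rho_\xi)$ at an intermediate $\xi$. The point is that $\xi_i^2\le\rho_\xi$, so the possibly singular factor $|e_{\rm xc}''(t)|\lesssim 1+t^{\alpha-1}$ from Assumption~{\bf A2} always appears multiplied by $\rho_\xi$ and collapses to the harmless pointwise bound $\rho_\xi+\rho_\xi^\alpha$; one H\"older step with exponents $(3/\alpha,2,6/(3-2\alpha))$ then gives $\|\Phi_h-\Phi\|_{0,\Omega}\|\Gamma\|_{1,\Omega}$ cleanly for every $\alpha\in(0,1]$.

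Your telescoping, by contrast, isolates $(e_{\rm xc}'(\rho_{\Phi_h})-e_{\rm xc}'(\rho_\Phi))\phi_i$, and after the mean value theorem the bare factor $e_{\rm xc}''(\rho^*)$ must be controlled using only $|\phi_i|\le\rho_\Phi^{1/2}$. For $\alpha<\tfrac12$ this leaves a net negative power of the density (roughly $\min(\rho_{\Phi_h},\rho_\Phi)^{\alpha-1}\rho_\Phi^{1/2}$) that no choice of H\"older exponents can absorb, since neither the $L^6$ bound on $\rho_{\Phi_h}^{1/2}$ nor the Sobolev regularity of $\Gamma$ provides an additional density factor at the zeros of $\rho_\Phi$ or $\rho_{\Phi_h}$. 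The ``delicate bookkeeping'' you flag is therefore not merely tedious: for the full range $\alpha\in(0,1]$ permitted by {\bf A2} it appears to break down, and the paper's one-step mean value theorem on the composite map is precisely the device that sidesteps this difficulty.
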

\begin{proof}
For any $\Psi \in \mathcal{H}$, by using the  H\"{o}lder inequality and the Young's inequality, we have that for any $\varepsilon>0$, there holds
\begin{eqnarray*}\label{temp-1}
\|\Psi\|_{0,3,\Omega}&\leq& \|\Psi\|_{0, \Omega}^{1/3} \|\Psi\|_{0, 4, \Omega}^{2/3}  =  (\varepsilon^{-2/3}\|\Psi\|_{0, \Omega}^{1/3}) (\varepsilon^{2/3} \|\Psi\|_{0, 4, \Omega}^{2/3}) \nonumber\\
&\lesssim & \frac{\varepsilon^{-2}}{3} \|\Psi\|_{0, \Omega} + \frac{2\varepsilon}{3} \|\Psi\|_{1, \Omega},
\end{eqnarray*}
which together with \eqref{L2-norm} implies that there exists a positive constant $C$ independent of $h$ and $\varepsilon$ such that
\begin{eqnarray*}
\|\Phi-\Phi_h\|_{0,3,\Omega}\le C \left( \varepsilon^{-2} r(h)+ \varepsilon \right)\|\Phi-\Phi_h\|_{1,\Omega}
\quad\forall~h\in (0,h_0].
\end{eqnarray*}
Therefore, by the H\"{o}lder inequality, we get
\begin{eqnarray}\label{proof-e00}
\|V_{\rm loc}(\Phi-\Phi_h)\|_{-1,\Omega}& =& \sup_{\Gamma\in \mathcal{H}}
\frac{\big(V_{\rm loc}(\Phi_h-\Phi),\Gamma\big)}{\|\Gamma\|_{1,\Omega}}
\leq \|V_{\rm loc}\|_{0,\Omega}\|\Phi-\Phi_h\|_{0,3,\Omega} \nonumber\\
& \lesssim &  \left( \varepsilon^{-2} r(h)+ \varepsilon \right) \|\Phi-\Phi_h\|_{1,\Omega}.
\end{eqnarray}

For the nonlocal pseudopotential operator, we derive
\begin{eqnarray*}\label{proof-44-}
\big(V_{\rm nl}(\Phi_h-\Phi),\Gamma\big) \lesssim\|\Phi-\Phi_h\|_{0,\Omega}\|\Gamma\|_{0,\Omega}
\quad\forall~\Gamma\in\mathcal{H}
\end{eqnarray*}
from the fact that
\begin{eqnarray*}
\big(\sum_{j=1}^n(\zeta_j,\phi_{i, h}-\phi_i)\zeta_j,v\big)
\lesssim\|\phi_{i, h}-\phi_i\|_{0,\Omega}\|v\|_{0,\Omega}
\quad\forall~v\in H_0^1(\Omega),\quad i=1,\cdots,N.
\end{eqnarray*}
Therefore, we have
\begin{eqnarray}\label{proof-e1}
\|V_{\rm nl}(\Phi_h-\Phi)\|_{-1,\Omega} =\sup_{\Gamma\in \mathcal{H}}
\frac{\big(V_{\rm nl}(\Phi_h-\Phi),\Gamma\big)}{\|\Gamma\|_{1,\Omega}}
\lesssim
\|\Phi-\Phi_h\|_{0,\Omega}\lesssim r(h)\|\Phi-\Phi_h\|_{1,\Omega}.~~~~~~~~
\end{eqnarray}

For the exchange-correlation part, we have that there exists $\xi=(\xi_1,\cdots,\xi_N)$
with $\xi_i=\delta_i\phi_{i,h}+(1-\delta_i)\phi_i$ and $\delta_i\in [0,1]~(i=1,\cdots,N)$, such that
\begin{eqnarray*}
(e_{\rm xc}'(\rho_{\Phi_h})\Phi_h-e_{\rm xc}'(\rho_{\Phi})\Phi,\Gamma)
=\sum_{i=1}^N\int_{\Omega}(e_{\rm xc}'(\rho_{\xi})+2\xi_i^2e_{\rm xc}''(\rho_{\xi}))(\phi_{i,h}-\phi_i)\gamma_i.
\end{eqnarray*}
This together with Assumption {\bf A2} leads to
\begin{eqnarray}\label{proof-e2}\nonumber
&&(e_{\rm xc}'(\rho_{\Phi_h})\Phi_h-e_{\rm xc}'(\rho_{\Phi})\Phi,\Gamma) ~\lesssim~ \sum_{i=1}^N\int_{\Omega}(\rho_{\xi}+\rho_{\xi}^{\alpha})|\phi_{i,h}-\phi_i|\cdot|\gamma_i| \\\nonumber
&\lesssim& \sum_{i=1}^N\big(\|\rho_{\xi}^{\alpha}\|_{0,3/{\alpha},\Omega}
\|\phi_{i,h}-\phi_{i}\|_{0,\Omega} \|\gamma_i\|_{0,6/(3-2\alpha),\Omega}+\|\rho_{\xi}\|_{0,3,\Omega}
\|\phi_{i,h}-\phi_{i}\|_{0,\Omega} \|\gamma_i\|_{0,6,\Omega}\big) \\
&\lesssim&  \|\Phi_h-\Phi\|_{0,\Omega}\|\Gamma\|_{1,\Omega}\quad\quad \forall~\Gamma\in \mathcal{H},
\end{eqnarray}
where the H\"{o}lder inequality and the fact
\begin{eqnarray*}
\|\rho_{\xi}\|_{0,3,\Omega}\leq \|\xi\|^2_{0,6,\Omega}\leq \|\Phi\|^2_{0,6,\Omega}+\|\Phi_h\|^2_{0,6,\Omega}\leq \bar{C}
\end{eqnarray*}
are used.
For the Coulomb potential, we obtain from the Young's inequality and the Uncertainty Principle \cite{reed-simon75} that
\begin{eqnarray*}
\|r^{-1}*(\rho_{\Phi}-\rho_{\Phi_h})\|_{0,\infty,\Omega} \lesssim
\sum_{i=1}^N\|\nabla(\phi_i+\phi_{i,h})\|_{0,\Omega}\|\phi_i-\phi_{i,h}\|_{0,\Omega}\lesssim \|\Phi-\Phi_h\|_{0,\Omega}.
\end{eqnarray*}
Therefore, we have that for any $v\in H_0^1(\Omega)$ and $1\leq i\leq N$, there holds
\begin{eqnarray*}\label{proof-43}\nonumber
&& \int_{\Omega} \big( (r^{-1}*\rho_{\Phi_h})\phi_{i,h} - (r^{-1}*\rho_{\Phi})\phi_i \big) v\\ \nonumber &=& \int_{\Omega}
(r^{-1}*\rho_{\Phi_h})(\phi_{i,h}-\phi_i)v + \int_{\Omega} r^{-1}*(\rho_{\Phi_h}-\rho_{\Phi})\phi_i v \\ \nonumber &\lesssim&
\|r^{-1}*\rho_{\Phi_h}\|_{0,\infty,\Omega} \|\phi_{i,h}-\phi_i\|_{0,\Omega} \|v\|_{0,\Omega}
+ \|r^{-1}*(\rho_{\Phi_h}-\rho_{\Phi})\|_{0,\infty,\Omega}\|\phi_i\|_{0,\Omega} \|v\|_{0,\Omega}
\\ &\lesssim& \|\phi_i-\phi_{i,h}\|_{0,\Omega}\|v\|_{0,\Omega} + \|\Phi-\Phi_h\|_{0,\Omega}\|v\|_{0,\Omega},
\end{eqnarray*}
which implies
\begin{eqnarray}\label{proof-46}
((r^{-1}*\rho_{\Phi_h})\Phi_h - (r^{-1}*\rho_{\Phi})\Phi,\Gamma) \lesssim
\|\Phi-\Phi_h\|_{0,\Omega}\|\Gamma\|_{0,\Omega}\quad\forall~\Gamma\in \mathcal{H}.
\end{eqnarray}
Consequently, we obtain from \eqref{proof-e2}, \eqref{proof-46} and the definition of $\mathcal{N}$ that
\begin{eqnarray}\label{proof-e5}
\|\mathcal{N}(\rho_{\Phi_h})\Phi_h-\mathcal{N}(\rho_{\Phi})\Phi\|_{-1,\Omega}=\sup_{\Gamma\in
\mathcal{H}}\frac{(\mathcal{N}(\rho_{\Phi_h})\Phi_h-\mathcal{N}
(\rho_{\Phi})\Phi,\Gamma)}{\|\Gamma\|_{1,\Omega}}\lesssim \|\Phi-\Phi_h\|_{0,\Omega}.~~~~~~~~
\end{eqnarray}

Taking,  $\varepsilon = r(h)^{1/3}$ and setting  $\hat{\kappa}(h) =  r(h)^{1/3}$,
 we have that $\hat{\kappa}(h) \to 0$ as $h \to 0$.
Combining \eqref{L2-norm}, \eqref{proof-e00}, \eqref{proof-e1} and \eqref{proof-e5},
we complete the proof of \eqref{V-N-property}.
\end{proof}

We now exploit the relationship between the nonlinear eigenvalue problem and its associated linear boundary
value problem, which will be employed in our analysis.
We rewrite \eqref{problem-eigen-compact-L} and \eqref{problem-eigen-compact-dis}  as
\begin{eqnarray*}\label{operator-K-1}
\Phi=K(\Phi\Lambda -V \Phi-\mathcal{N}(\rho_{\Phi})\Phi),
\end{eqnarray*}
\vskip -0.6cm
\begin{eqnarray}\label{u-w}
\Phi_h=P_hK(\Phi_h\Lambda_h-V
\Phi_h-\mathcal{N}(\rho_{\Phi_h})\Phi_h),
\end{eqnarray}
respectively.
Set $W^h=K(\Phi_h \Lambda_h-V \Phi_h-\mathcal{N}(\rho_{\Phi_h})\Phi_h)$, we have $\Phi_h=P_h W^h$.

\begin{theorem}\label{thm-nonlinear-linear}
Let $(\Lambda,\Phi)$ be a solution of \eqref{problem-eigen-compact-L}. If Assumptions {\bf A2} and {\bf A3} are satisfied, then
there exists $\kappa(h)\in (0,1)$ such that $\kappa(h)\rightarrow 0$ as $h\rightarrow 0$ and
\begin{eqnarray}\label{nonlinear-linear-neq}
\|\Phi-\Phi_h\|_{1,\Omega}= \|W^h - P_h W^h\|_{1,\Omega} +\mathcal {O}(\kappa(h))\|\Phi-\Phi_h\|_{1,\Omega}.
\end{eqnarray}
\end{theorem}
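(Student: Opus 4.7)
The plan is to split $\Phi-\Phi_h$ into two pieces exploiting the identity $\Phi_h=P_h W^h$, and then show that one of the pieces is negligible compared with $\|\Phi-\Phi_h\|_{1,\Omega}$. Concretely, I would write
\begin{eqnarray*}
\Phi-\Phi_h = (\Phi - W^h) + (W^h - P_h W^h),
\end{eqnarray*}
so by the triangle inequality it suffices to establish
\begin{eqnarray*}
\|\Phi - W^h\|_{1,\Omega} \lesssim \kappa(h)\,\|\Phi-\Phi_h\|_{1,\Omega},
\end{eqnarray*}
for some $\kappa(h)\to 0$. Indeed, that bound gives both $\bigl|\|\Phi-\Phi_h\|_{1,\Omega}-\|W^h-P_hW^h\|_{1,\Omega}\bigr|\lesssim \kappa(h)\|\Phi-\Phi_h\|_{1,\Omega}$, which is exactly the asserted identity.

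To bound $\|\Phi-W^h\|_{1,\Omega}$, I would use the continuous representation $\Phi=K(\Phi\Lambda - V\Phi - \mathcal{N}(\rho_\Phi)\Phi)$ together with the definition of $W^h$ to get
\begin{eqnarray*}
\Phi - W^h = K\bigl((\Phi\Lambda - \Phi_h\Lambda_h) - V(\Phi-\Phi_h) - (\mathcal{N}(\rho_\Phi)\Phi - \mathcal{N}(\rho_{\Phi_h})\Phi_h)\bigr),
\end{eqnarray*}
and then apply \eqref{K-property} to move to the $\|\cdot\|_{-1,\Omega}$ norm:
\begin{eqnarray*}
\|\Phi-W^h\|_{1,\Omega} \lesssim \|\Phi\Lambda-\Phi_h\Lambda_h\|_{-1,\Omega} + \|V(\Phi-\Phi_h)\|_{-1,\Omega} + \|\mathcal{N}(\rho_\Phi)\Phi-\mathcal{N}(\rho_{\Phi_h})\Phi_h\|_{-1,\Omega}.
\end{eqnarray*}
The second and third terms are already controlled by Lemma \ref{V}, contributing $\hat{\kappa}(h)\|\Phi-\Phi_h\|_{1,\Omega}$.

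For the remaining eigenvalue term, I would decompose $\Phi\Lambda-\Phi_h\Lambda_h=(\Phi-\Phi_h)\Lambda + \Phi_h(\Lambda-\Lambda_h)$ and estimate the two pieces separately. The first piece satisfies $\|(\Phi-\Phi_h)\Lambda\|_{-1,\Omega}\lesssim |\Lambda|\,\|\Phi-\Phi_h\|_{0,\Omega}\lesssim r(h)\|\Phi-\Phi_h\|_{1,\Omega}$ via \eqref{L2-norm} together with the uniform boundedness \eqref{eq-bounded}. For the second piece, $\|\Phi_h(\Lambda-\Lambda_h)\|_{-1,\Omega}\lesssim |\Lambda-\Lambda_h|$, and Theorem \ref{priori-theorem} gives $|\Lambda-\Lambda_h|\lesssim (\|\Phi-\Phi_h\|_{1,\Omega}+r(h))\|\Phi-\Phi_h\|_{1,\Omega}$. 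Since $\|\Phi-\Phi_h\|_{1,\Omega}\to 0$ as $h\to 0$ by \eqref{H1-norm} and \eqref{stable1}, both contributions are $o(1)\cdot\|\Phi-\Phi_h\|_{1,\Omega}$.

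Finally, I would define $\kappa(h)$ as the maximum of $\hat{\kappa}(h)$, $r(h)$, and $\|\Phi-\Phi_h\|_{1,\Omega}$ (all of which tend to $0$ as $h\to 0$), take $h$ small enough that $\kappa(h)<1$, and the desired identity \eqref{nonlinear-linear-neq} follows from the triangle inequality. The main obstacle is the careful bookkeeping of the $|\Lambda-\Lambda_h|$ contribution, since one must ensure that the $\|\Phi-\Phi_h\|_{1,\Omega}^2$ term from \eqref{prop-conc} really gets absorbed into an $o(1)$ prefactor; this is where \eqref{H1-norm} combined with $\inf_{\Psi\in V_h}\|\Phi-\Psi\|_{1,\Omega}\to 0$ is essential.
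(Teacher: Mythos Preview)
Your proposal is correct and follows essentially the same route as the paper: write $\Phi-\Phi_h=(W^h-P_hW^h)+(\Phi-W^h)$, expand $\Phi-W^h$ via $K$ applied to the difference of the right-hand sides, and bound the three resulting pieces using \eqref{K-property}, Lemma~\ref{V}, and the a~priori estimates \eqref{prop-conc}--\eqref{L2-norm}. The only cosmetic difference is that the paper sets $\kappa(h)=r(h)+\hat\kappa(h)$ and absorbs the quadratic contribution $\|\Phi-\Phi_h\|_{1,\Omega}^2$ from \eqref{prop-conc} somewhat implicitly, whereas you track it explicitly by including $\|\Phi-\Phi_h\|_{1,\Omega}$ as a third summand in $\kappa(h)$; your bookkeeping is slightly more transparent on this point.
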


\begin{proof}
By the definition of $W^h$, we have
\begin{eqnarray}\label{total-term}
\Phi-W^h= K(\Phi\Lambda-\Phi_h \Lambda_h) + KV (\Phi_h-\Phi)
+K(\mathcal{N}(\rho_{\Phi_h})\Phi_h-\mathcal{N}(\rho_{\Phi})\Phi).
\end{eqnarray}
For the first term on the right-hand side of \eqref{total-term}, we obtain from \eqref{K-property} and \eqref{L2-norm} that
\begin{eqnarray}\label{proof-ee1}
\|K(\Phi\Lambda-\Phi_h\Lambda_h)\|_{1,\Omega}
&\leq&\|\Phi\Lambda-\Phi_h \Lambda_h\|_{0,\Omega}
\lesssim\|(\Phi-\Phi_h)\Lambda\|_{0,\Omega}+\|\Phi_h(\Lambda-\Lambda_h)\|_{0,\Omega} \nonumber\\
&\lesssim &\|\Phi-\Phi_h\|_{0,\Omega}|\Lambda|+|\Lambda-\Lambda_h|
\lesssim r(h)\|\Phi-\Phi_h\|_{1,\Omega}. \quad
\end{eqnarray}
Using Lemma \ref{V}, we can estimate the second term on the right-hand side of \eqref{total-term} as follows
\begin{eqnarray}\label{proof-e0}
\|KV(\Phi-\Phi_h)\|_{1,\Omega} ~\lesssim~ \|V(\Phi-\Phi_h)\|_{-1,\Omega} ~\lesssim~
\hat{\kappa}(h) \|\Phi-\Phi_h\|_{1,\Omega}.
\end{eqnarray}
Using \eqref{K-property}, \eqref{L2-norm} and \eqref{V-N-property}, we obtain for the last term of \eqref{total-term} that
\begin{eqnarray}\label{proof-ee3}\nonumber
\|K(\mathcal{N}(\rho_{\Phi_h})\Phi_h-\mathcal{N}(\rho_{\Phi})\Phi)\|_{1,\Omega} \lesssim
\|\mathcal{N}(\rho_{\Phi_h})\Phi_h-\mathcal{N}(\rho_{\Phi})\Phi\|_{-1,\Omega}\lesssim r(h)\|\Phi-\Phi_h\|_{1,\Omega}.
\end{eqnarray}
Set $\kappa(h)=r(h)+\hat{\kappa}(h) $, we derive from \eqref{total-term}, \eqref{proof-ee1}, and \eqref{proof-e0}  that
\begin{eqnarray}\label{thm1-neq1}
\|\Phi-W^h\|_{1,\Omega}\leq \hat{C} \kappa(h)\|\Phi-\Phi_h\|_{1,\Omega},
\end{eqnarray}
with $\hat{C}$ being some constant.
Note that (\ref{u-w}) implies
\begin{eqnarray*}
\Phi - \Phi_h = W^h-P_h W^h +\Phi-W^h,
\end{eqnarray*}
which together with \eqref{thm1-neq1} leads to (\ref{nonlinear-linear-neq}). This completes the proof.
\end{proof}

\subsection{A posteriori error estimates}
Define
\begin{eqnarray}\label{def-kappa-}
\tilde{\kappa}(h_0)=\sup_{h\in (0
,h_0]}\kappa(h)
\end{eqnarray}
and note that $\tilde{\kappa}(h_0) \ll 1$ if $h_0\ll 1$.
Based on the relevant results for linear boundary value problems (see Appendix), we have the following
 estimates for AFE approximations.

\begin{theorem}
\label{theorem-posteriori}
Let $(\Lambda,\Phi)$ be a solution of \eqref{problem-eigen-compact-L}, $h_0 \ll 1$ and $h \in (0, h_0]$. If Assumptions {\bf A2} and {\bf A3} are satisfied,
then there exist positive constants $C_1, C_2$ and $C_3$ depending on the coercivity constant
$c_a$ (in \eqref{coercive-constant}) and the shape regularity constant $\gamma^{\ast}$ (in \eqref{shape-regularity}),
such that
\begin{eqnarray}\label{upper-bound}
\|\Phi-\Phi_h\|^2_{1,\Omega} \leq C_1 \eta^2_h(\Phi_h, \Omega),
\end{eqnarray}
\vskip -0.6cm
\begin{eqnarray}\label{lower-bound}
C_2 \eta^2_h(\Phi_h, \Omega) \le \|\Phi-\Phi_h\|_{1,\Omega}^2+ C_3{\rm osc}_h^2(\Phi_h, \Omega).
\end{eqnarray}
\end{theorem}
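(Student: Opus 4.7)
The plan is to reduce the a posteriori analysis of the nonlinear eigenvalue problem to that of an associated linear boundary value problem, exploiting the bridge provided by Theorem \ref{thm-nonlinear-linear}. Set the auxiliary function $W^h = K(\Phi_h\Lambda_h - V\Phi_h - \mathcal{N}(\rho_{\Phi_h})\Phi_h)$; by \eqref{u-w} the discrete approximation $\Phi_h$ coincides with the $H^1$-projection $P_h W^h$, so $\Phi_h$ is precisely the Galerkin approximation of $W^h$ for the linear elliptic problem $\mathcal{L}W = F$ with right-hand side $F = \Phi_h\Lambda_h - V\Phi_h - \mathcal{N}(\rho_{\Phi_h})\Phi_h$. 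A direct calculation shows that the element residual and the interior jump associated with this linear problem agree (up to sign) with $\mathcal{R}_\tau(\Phi_h)$ and $J_e(\Phi_h)$ from the definition of our estimator; indeed the estimator has been engineered precisely to capture them. Consequently the classical residual-type a posteriori theory for second-order linear elliptic problems recalled in the Appendix yields
\begin{equation*}
\|W^h - P_h W^h\|^2_{1,\Omega} \lesssim \eta^2_h(\Phi_h,\Omega), \qquad \eta^2_h(\Phi_h,\Omega) \lesssim \|W^h - P_h W^h\|^2_{1,\Omega} + {\rm osc}^2_h(\Phi_h,\Omega),
\end{equation*}
with constants depending only on $c_a$ and $\gamma^\ast$.

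To transfer these estimates to the nonlinear error $\|\Phi - \Phi_h\|_{1,\Omega}$, I would split $\Phi - \Phi_h = (W^h - P_h W^h) + (\Phi - W^h)$ and use the intermediate bound $\|\Phi - W^h\|_{1,\Omega} \leq \hat{C}\kappa(h)\|\Phi - \Phi_h\|_{1,\Omega}$ that was actually proved inside Theorem \ref{thm-nonlinear-linear} at \eqref{thm1-neq1}. The triangle inequality then gives
\begin{equation*}
\|\Phi - \Phi_h\|_{1,\Omega} \leq \|W^h - P_h W^h\|_{1,\Omega} + \hat{C}\kappa(h)\|\Phi - \Phi_h\|_{1,\Omega}
\end{equation*}
and, in the reverse direction,
\begin{equation*}
\|W^h - P_h W^h\|_{1,\Omega} \leq (1 + \hat{C}\kappa(h))\|\Phi - \Phi_h\|_{1,\Omega}.
\end{equation*}
Since $\tilde{\kappa}(h_0)$ defined in \eqref{def-kappa-} tends to $0$ as $h_0 \to 0$, we may pick $h_0$ so small that $\hat{C}\tilde{\kappa}(h_0) \leq 1/2$. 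Absorbing the perturbation term into the left-hand side of the first inequality above and squaring yields the upper bound \eqref{upper-bound}; squaring the second inequality and invoking the lower bound of the linear problem above delivers \eqref{lower-bound}.

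The main obstacle is essentially bookkeeping. The linear-problem estimates must be invoked with constants that depend only on $c_a$ and $\gamma^\ast$ (the standard proof via bubble functions and Cl\'ement interpolation furnishes this), and the absorption step must leave the final $C_1, C_2, C_3$ free of $\kappa$-dependence. Since we only need $\hat{C}\tilde{\kappa}(h_0)$ to be bounded away from $1$, this is guaranteed by the assumption $h_0 \ll 1$ and the properties of $\kappa$ established in Theorem \ref{thm-nonlinear-linear}; the resulting prefactors $(1-\hat{C}\tilde{\kappa}(h_0))^{-2}$ and $(1+\hat{C}\tilde{\kappa}(h_0))^{2}$ are harmless universal constants once $h_0$ is fixed, so the dependence structure asserted in the theorem is preserved.
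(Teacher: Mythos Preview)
Your proposal is correct and follows essentially the same route as the paper's proof: reduce to the linear boundary value problem via $W^h$ and $\Phi_h=P_hW^h$, identify the estimators $\tilde\eta_h(P_hW^h,\Omega)=\eta_h(\Phi_h,\Omega)$ and $\widetilde{\rm osc}_h(P_hW^h,\Omega)={\rm osc}_h(\Phi_h,\Omega)$, invoke Theorem~\ref{upper-bound-theorem}, and then use \eqref{thm1-neq1} together with the triangle inequality and absorption to transfer the bounds. The paper records the resulting constants explicitly as $C_1=\tilde C_1(1+\hat C\tilde\kappa(h_0))^2$, $C_2=\tilde C_2(1-\hat C\tilde\kappa(h_0))^2$, $C_3=\tilde C_3(1-\hat C\tilde\kappa(h_0))^2$, which matches your absorption step up to the harmless discrepancy between $(1-\hat C\tilde\kappa(h_0))^{-2}$ and $(1+\hat C\tilde\kappa(h_0))^{2}$.
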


\begin{proof}
Due to $\mathcal{L} W^h = \Phi_h \Lambda_h-V\Phi_h-\mathcal{N}(\rho_{\Phi_h})\Phi_h$,
we obtain from (\ref{boundary-upper}) and (\ref{boundary-lower}) that
\begin{eqnarray}\label{auxiliary-boundary-problem-upper}
\|W^h - P_h W^h \|^2_{1,\Omega} \leq \tilde{C}_1 \tilde{\eta}^2_h (P_h W^h, \Omega),
\end{eqnarray}
\vskip -0.6cm
\begin{eqnarray}\label{auxiliary-boundary-problem-lower}
\tilde{C}_2 \tilde{\eta}^2_h (P_h W^h, \Omega) \le \|W^h-P_h W^h\|_{1,\Omega}^2
+\tilde{C}_3\widetilde{{\rm osc}}^2_h(P_h W^h, \Omega),
\end{eqnarray}
where the constants $\tilde C_1$, $\tilde C_2$ and $\tilde C_3$ are given in Theorem \ref{upper-bound-theorem},
$\tilde{\eta}^2_h (P_h W^h, \Omega)$ and $\widetilde{{\rm osc}}^2_h(P_h W^h, \Omega)$ are defined by
\eqref{error-indicator} and \eqref{local-oscillation} with $\Gamma$ being replaced by $P_h W^h$.
 It is easy to see that $\tilde{\eta}_h(P_h W^h, \Omega) = \eta_h(\Phi_h, \Omega) $ and $\widetilde{{\rm osc}}_h(P_h W^h, \Omega) =  {\rm osc}_h(\Phi_h, \Omega)$ from their definitions and the fact that $\Phi_h = P_h W^h$.

We have from \eqref{nonlinear-linear-neq} and \eqref{def-kappa-} that
\begin{eqnarray*}\label{temp-ineq-1}
\|\Phi-\Phi_h \|_{1,\Omega}\leq (1+\hat{C} \tilde{\kappa}(h_0))\|W^h-P_hW^h\|_{1,\Omega},
\end{eqnarray*}
which together with \eqref{auxiliary-boundary-problem-upper} leads to \eqref{upper-bound} by taking the constant
\begin{eqnarray}\label{C1}
C_1 = \tilde{C}_1 (1+ \hat{C} \tilde{\kappa}(h_0))^2.
\end{eqnarray}

Similarly, we get (\ref{lower-bound}) from (\ref{u-w}), (\ref{nonlinear-linear-neq})
and (\ref{auxiliary-boundary-problem-lower}). In particular, we may choose $C_2$ and $C_3$ by
\begin{eqnarray}\label{coef-eigen-bound}
C_2=\tilde{C}_2 (1 - \hat{C} \tilde{\kappa}(h_0))^2, ~~C_3=\tilde{C}_3 (1 - \hat{C}\tilde{\kappa}(h_0))^2.
\end{eqnarray}
This completes the proof.
\end{proof}

We shall now present the following property  that will be used  in
our analysis.
\begin{lemma}\label{lemma-eta-etah-T}
Let $(\Lambda_h, \Psi_h)$ be solution of (\ref{problem-eigen-compact-dis}). For any $\Psi_h' =  \Psi_h U$ with $U$ being some orthogonal matrix, there hold
\begin{eqnarray}\label{lemma-eta-etah-T-conc1}
\frac{1}{N} \eta_h^2(\Psi_h', \tau) \leq    \eta_h^2(\Psi_h, \tau) \leq N \eta_h^2(\Psi_h', \tau), ~~~\forall \tau \in \mathcal{T}_h,
\end{eqnarray}
and
\begin{eqnarray}\label{lemma-eta-etah-T-conc2}
 \frac{1}{N} osc_h^2(\Psi_h', \tau) \leq  osc_h^2(\Psi_h, \tau) \leq N osc_h^2(\Psi_h', \tau), ~~~\forall \tau \in \mathcal{T}_h.
\end{eqnarray}
\end{lemma}

\begin{proof}
We write $U = (\alpha_{i,j})_{i,j=1}^N$. Since $U$ is orthogonal, we have $\sum_{l=1}^{N} \alpha_{i, l} 　\alpha_{j,l}   = \sum_{l=1}^{N} \alpha_{l, i} 　\alpha_{l, j} 　= \delta_{ij}$ 　for $i,j=1, \cdots, N$.

On the one hand, we obtain from $\Psi_h' =\Psi_h  U $ that
\begin{eqnarray*}
\psi_{i, h}' = \sum_{j=1}^N  \alpha_{j,i}   \psi_{j,h}, ~~i = 1, \cdots, N.
\end{eqnarray*}　
Denote the Lagrange multiplier corresponding to $\Psi_h'$  by $\Lambda_h'$. Since $\Psi_h' =\Psi_h  U $ implies  $H_{\Psi_h'} = H_{\Psi_h}$,  we get
\begin{eqnarray*}
\Lambda_h' = (\Psi_h')^T H_{\Psi_h'} \Psi_h' = (\Psi_h  U)^T H_{\Psi_h} \Psi_h  U = U^T \Psi_h^T H_{\Psi_h} \Psi_h U  = U^T \Lambda_h U.
\end{eqnarray*}
Therefore,
\begin{eqnarray*}
 \Psi_h' \Lambda_h' = \Psi_h U U^T \Lambda_h U = \Psi_h \Lambda_h U,
\end{eqnarray*}
that is,
\begin{eqnarray*}
\sum_{j=1}^{N}\lambda_{ij,h}' \psi_{j, h}' = \sum_{l,j =1}^N　\alpha_{l,i}  \lambda_{lj,h} \psi_{j,h}, ~ i = 1, \cdots, N.
\end{eqnarray*}
Consequently,  for any $\tau \in \mathcal{T}_h$
\begin{eqnarray*}
\eta^2_h(\Psi_h', \tau) &=& h_\tau^2\|\mathcal{R}_{\tau}(\Psi_h')\|_{0,\tau}^2 + \sum_{e\in
\mathcal{E}_h,e\subset\partial \tau} h_e \|J_e(\Psi_h')\|_{0,e}^2 \nonumber\\
&=&　\sum_{i=1}^N  \Big(h_\tau^2 \|H_{\Psi_h'}\psi_{i,h}'-\sum_{j=1}^N\lambda_{ij,h}'\psi_{j,h}' \|_{0,\tau}^2 + \sum_{e\in
\mathcal{E}_h,e\subset\partial \tau} h_e　\|j_e(\psi_{i,h}') \|_{0,e}^2\Big)\nonumber\\
&=& \sum_{i=1}^N　\Big(h_\tau^2 \| H_{\Psi_h} \sum_{l=1}^{N} \alpha_{l, i}\psi_{l,h} - \sum_{l,j =1}^N　\alpha_{l,i}  \lambda_{lj,h} \psi_{j,h}\|_{0,\tau}^2　\nonumber\\
&& +　\sum_{e\in
\mathcal{E}_h,e\subset\partial \tau} h_e　\|j_e(\sum_{l=1}^{N} \alpha_{l,i}\psi_{l,h}) \|_{0,e}^2\Big).
\end{eqnarray*}
Thus, by triangle inequality and H\"{o}lder inequality, we may estimate as follows
\begin{eqnarray*}
\eta^2_h(\Psi_h', \tau)　
&\leq& \sum_{i=1}^N　\Big(h_\tau^2 \big(\sum_{l=1}^{N} \alpha_{l,i}　\| H_{\Psi_h}　\psi_{l,h} - \sum_{j=1}^N　 \lambda_{lj, h} \psi_{j,h}\|_{0,\tau}\big)^2　\nonumber\\
&& +　\sum_{e\in
\mathcal{E}_h,e\subset\partial \tau} h_e　\big(\sum_{l=1}^{N} \alpha_{l,i}\|j_e(\psi_{l,h}) \|_{0,e}\big)^2\Big)\nonumber\\
 &\leq&  \sum_{i=1}^N　\Big( \big(\sum_{l=1}^{N} \alpha_{l,i}^2\big) \big(　h_\tau^2\sum_{l=1}^{N} \| H_{\Psi_h}　\psi_{l,h} - \sum_{j=1}^N　  \lambda_{lj, h} \psi_{j,h}\|_{0,\tau}^2　
 \nonumber\\
&& +　\sum_{l=1}^{N}  \sum_{e\in
\mathcal{E}_h,e\subset\partial \tau} h_e \|j_e(\psi_{l,h}) \|_{0,e}^2\big)\Big)\nonumber\\
 &=&  \sum_{i=1}^N　\Big(  \sum_{l=1}^{N} h_\tau^2 \| H_{\Psi_h}　\psi_{l,h} - \sum_{j=1}^N　  \lambda_{lj, h} \psi_{j,h}\|_{0,\tau}^2　+　\sum_{l=1}^{N} \sum_{e\in
\mathcal{E}_h,e\subset\partial \tau} h_e \|j_e(\psi_{l,h}) \|_{0,e}^2 \Big)\nonumber\\
 &=& N \eta^2_h(\Psi_h, \tau), ~\forall \tau \in \mathcal{T}_h,
\end{eqnarray*}
where the fact $\sum_{l=1}^{N} \alpha_{l,i}^2 = 1$ is used.
That is,
\begin{eqnarray}\label{lemma-eta-etah-T-ineq1}
\eta_h^2(\Psi_h',\tau)\leq  N   \eta_h^2(\Psi_h, \tau), ~\forall \tau \in \mathcal{T}_h.
\end{eqnarray}

On the other  hand, $\Psi_h' =\Psi_h  U $ implies $\Psi_h =\Psi_h'  U^T $. Hence,
\begin{eqnarray*}
\psi_{i,h} = \sum_{j=1}^N \alpha_{i,j}  \psi_{j,h}', \quad i=1, \cdots, N.
\end{eqnarray*}
By the similar process we obtain that
\begin{eqnarray}\label{lemma-eta-etah-T-ineq2}
\eta^2_h(\Psi_h, \tau)　
 &\leq& N \eta^2_h(\Psi_h', \tau), ~~\forall \tau \in \mathcal{T}_h.
\end{eqnarray}

Similarly, there have
\begin{eqnarray}\label{lemma-eta-etah-T-ineq3}
osc_h^2(\Psi_h', \tau)\leq  N   osc_h^2(\Psi_h, \tau),  ~~\forall \tau \in \mathcal{T}_h
\end{eqnarray}
and
\begin{eqnarray}\label{lemma-eta-etah-T-ineq4}
osc^2_h(\Psi_h, \tau)　
 \leq N osc^2_h(\Psi_h', \tau), ~~\forall \tau \in \mathcal{T}_h.
\end{eqnarray}

 We  obtain (\ref{lemma-eta-etah-T-conc1}) from (\ref{lemma-eta-etah-T-ineq1}) and  (\ref{lemma-eta-etah-T-ineq2}), and get  (\ref{lemma-eta-etah-T-conc2})  from  (\ref{lemma-eta-etah-T-ineq3}) and  (\ref{lemma-eta-etah-T-ineq4}). This completes the proof.
\end{proof}

Thanks to Lemma \ref{lemma-eta-etah-T}, we can get the bounds of $\| \Phi - \Phi_h\|_{1, \Omega}$ by computable terms  $\eta^2_h(\Psi_h, \Omega)$ and ${\rm osc}_h^2(\Psi_h, \Omega)$, other than the uncomputable term   $\eta^2_h(\Phi_h, \Omega)$ and ${\rm osc}_h^2(\Phi_h, \Omega)$ as in  Theorem \ref{theorem-posteriori},
 and then get the a posteriori error estimate for distance between the ground states and its approximation as follows.
%
%
%
\begin{theorem}
\label{theorem-posteriori-3}{\em (a posteriori error estimate)}
Suppose $h_0 \ll 1$ and $h \in (0, h_0]$. Let $(\bbmu_h, \Psi_h)$ be solution of (\ref{problem-eigen-dis}), if Assumptions {\bf A2} and {\bf A3} are satisfied,
then there hold
\begin{eqnarray}\label{upper-bound-dist2}
d^2_{\mathcal{H}}(\overline{\Theta}_h, \Theta)  \lesssim  \eta^2_h(\Psi_h, \Omega),
\end{eqnarray}
\vskip -0.6cm
\begin{eqnarray}\label{lower-bound-dist2}
\eta^2_h(\Psi_h, \Omega) \lesssim d^2_{\mathcal{H}}(\overline{\Theta}_h, \Theta)  +  {\rm osc}_h^2(\Psi_h, \Omega),
\end{eqnarray}
here $\overline{\Theta}_h = \left\{(\Lambda_h,\Phi_h)\in\mathbb{R}^{N\times N}\times (\mathbb{Q}\cap V_h):
\Phi_h \in [\Psi_h], ~\mbox{and}~ \Lambda_h = \Phi_h^T H_{\Phi_h} \Phi_h  \right\} \subseteq \Theta_h$.
\end{theorem}

Our analysis is based on the following crucial technical result, which can be obtain directly from Lemma \ref{lemma-eta-etah-T}.
\begin{lemma}\label{eta-etah}
Let $(\Lambda_h,\Phi_h)$ be any solution of  (\ref{problem-eigen-compact-dis}). If  there exists  constant $\theta \in (0, 1)$ satisfying
\begin{eqnarray}\label{lemma-eta-etah-cond}
\sum_{\tau\in \mathcal{M}_h} \eta_h^2(\Phi_h, \tau) \geq \theta \eta_h^2(\Phi_h, \Omega),
\end{eqnarray}
 then for any $\Phi_h' =  \Phi_h U$ with $U$ being some orthogonal matrix,
there exists a constant $\theta' \in (0, 1)$, such that
\begin{eqnarray}\label{lemma-eta-etah-conc}
\sum_{\tau\in \mathcal{M}_h} \eta_h^2(\Phi_h', \tau) \geq \theta' \eta_h^2(\Phi_h', \Omega).
\end{eqnarray}
In further, we have $\theta' = \frac{\theta}{N^2}$.
\end{lemma}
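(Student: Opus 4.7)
The plan is to establish the pointwise identity $\eta_k(\Phi_k,\tau)=\eta_k(\Psi_k,\tau)$ for every $\tau\in\mathcal{T}_k$, from which \eqref{lemma-eta-etah-conc} follows immediately with $\theta'=\theta$. The underlying reason is that the Kohn-Sham Hamiltonian depends on the orbitals only through the density $\rho_{\Psi_k}$, which is itself invariant under any orthogonal transformation of the orbital row-vector.

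First I would verify the two structural facts $\rho_{\Phi_k}=\rho_{\Psi_k}$ (hence $H_{\Phi_k}=H_{\Psi_k}$) and $\Lambda_k^{\Phi}=U_k^{T}\Lambda_k^{\Psi}U_k$ by writing $\phi_{i,k}=\sum_{a}u_{ai}\psi_{a,k}$ and using $U_k^{T}U_k=I$. Substituting into the definition of the element residual and collecting terms gives the transformation law
\begin{equation*}
(\mathcal{R}_\tau(\Phi_k))_i=\sum_{a=1}^{N}u_{ai}\,(\mathcal{R}_\tau(\Psi_k))_a, \qquad (J_e(\Phi_k))_i=\sum_{a=1}^{N}u_{ai}\,(J_e(\Psi_k))_a,
\end{equation*}
where the second identity is immediate from linearity of the normal derivative. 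Orthogonality of $U_k$ then yields the pointwise invariance of the Euclidean norms in the orbital index,
\begin{equation*}
\sum_{i=1}^{N}|(\mathcal{R}_\tau(\Phi_k))_i(x)|^{2}=\sum_{a,b}(U_kU_k^{T})_{ab}(\mathcal{R}_\tau(\Psi_k))_a(x)(\mathcal{R}_\tau(\Psi_k))_b(x)=\sum_{a=1}^{N}|(\mathcal{R}_\tau(\Psi_k))_a(x)|^{2},
\end{equation*}
and analogously for $J_e$. Integrating and multiplying by $h_\tau^{2}$ and $h_e$ gives $\eta_k^{2}(\Phi_k,\tau)=\eta_k^{2}(\Psi_k,\tau)$ for every $\tau\in\mathcal{T}_k$; plugging this into \eqref{lemma-eta-etah-cond} produces \eqref{lemma-eta-etah-conc} with $\theta'=\theta$.

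I do not expect any genuine obstacle: the entire argument is the orthogonal invariance of the sum-of-squares norm on $\mathbb{R}^N$ applied orbital-wise. The only bookkeeping subtlety is to track that the right action of $U_k$ on $\Phi_k$ induces conjugation on the Lagrange multiplier matrix, so that in the residual the two contributions $H_{\Phi_k}\phi_{i,k}$ and $\sum_{j}\lambda^{\Phi}_{ij}\phi_{j,k}$ both factor cleanly as $\sum_{a}u_{ai}(\cdot)_a$; once this is checked the cancellation of the $U_k$'s in the norm is automatic.
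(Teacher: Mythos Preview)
Your argument is correct and in fact sharper than the paper's. The paper does \emph{not} exploit the exact orthogonal invariance you identify; instead it writes each component indicator $\eta_k((\Phi_k)_i,\cdot)$ as a linear combination of the single-orbital indicators $\eta_k(\psi_{j,k},\cdot)$ via the entries of $U_k$, then applies the triangle inequality followed by Cauchy--Schwarz. This yields only the crude two-sided bounds $\eta_k^2(\Phi_k,\Omega)\le N\,\eta_k^2(\Psi_k,\Omega)$ and $\sum_{\tau\in\mathcal{M}_k}\eta_k^2(\Psi_k,\tau)\le N\sum_{\tau\in\mathcal{M}_k}\eta_k^2(\Phi_k,\tau)$, and hence the weaker constant $\theta'=\theta/N^2$. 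Your observation that the residual and jump vectors transform as $\mathcal{R}_\tau(\Phi_k)=U_k^{T}\mathcal{R}_\tau(\Psi_k)$ and $J_e(\Phi_k)=U_k^{T}J_e(\Psi_k)$, so that their orbital-wise $\ell^2$ norms are preserved pointwise, gives the elementwise identity $\eta_k(\Phi_k,\tau)=\eta_k(\Psi_k,\tau)$ and the optimal constant $\theta'=\theta$. Beyond elegance, this pays off downstream: in Theorem~\ref{thm-error-reduction} the D\"orfler parameter passed to the auxiliary linear problem can be taken equal to $\theta$ rather than $\theta/N^2$, which tightens the contraction factor $\tilde\xi$ and the ensuing complexity constants.
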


\subsection{Convergence rate}
Now we turn to analyze the convergence rate of Algorithm   \ref{algorithm-AFEM-con-rate}.
Similar to \cite{chen-he-zhou10,  dai-xu-zhou08}, we shall first establish some relationships between two level finite element approximations. We use $\mathcal{T}_H$ to denote a coarse mesh and
$\mathcal{T}_h$ to denote a refined mesh of $\mathcal{T}_H$.

\begin{lemma}\label{lemma-bound-general}
Let $h, H \in (0, h_0]$ and $(\Lambda,\Phi)$ be a solution of \eqref{problem-eigen-compact-L}.  If Assumptions {\bf A2} and {\bf A3} are satisfied, then
\begin{eqnarray}\label{lemma-bound-general-conc-1}
\|\Phi  - \Phi_h  \|_{1,\Omega}&=& \|W^{H}-P_h W^{H} \|_{1,\Omega}
+ \mathcal{O} (\tilde{\kappa}(h_0)) \left( \|\Phi - \Phi_H \|_{1,\Omega}+ \|\Phi
- \Phi_h\|_{1,\Omega}\right),~~~~~
\end{eqnarray}
\vskip -0.6cm
\begin{eqnarray}\label{lemma-bound-general-conc-2}
{\rm osc}_h(\Phi_h,\Omega)&=& \widetilde{{\rm osc}}_h(P_h W^{H}, \Omega) +
\mathcal{O} (\tilde{\kappa}(h_0)) \left( \|\Phi  - \Phi_H \|_{1,\Omega} + \|\Phi-\Phi_h\|_{1,\Omega}\right),
\end{eqnarray}
and
\begin{eqnarray}\label{lemma-bound-general-conc-3}
\eta_{h}(\Phi_h, \Omega) &=&  \tilde{\eta}_{h}(P_h W^{H}, \Omega) + \mathcal{O}
(\tilde{\kappa}(h_0)) \left( \|\Phi  - \Phi_H \|_{1,\Omega} + \|\Phi - \Phi_h\|_{1,\Omega}\right).
\end{eqnarray}
\end{lemma}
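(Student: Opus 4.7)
My overall plan is to reduce all three identities to a single quantitative control of $\|W^h-W^H\|_{1,\Omega}$, exploiting the key identity
\[
\Phi_h-P_hW^H \;=\; P_h(W^h-W^H),
\]
which follows from $\Phi_h=P_hW^h$ (equation \eqref{u-w}). Once $\|W^h-W^H\|_{1,\Omega}$ is bounded by $\tilde\kappa(h_0)(\|\Phi-\Phi_h\|_{1,\Omega}+\|\Phi-\Phi_H\|_{1,\Omega})$, each of \eqref{lemma-bound-general-conc-1}--\eqref{lemma-bound-general-conc-3} follows by a direct comparison of the nonlinear quantities at $\Phi_h$ with the linear ones at $P_hW^H$.

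First I would establish the auxiliary bound for $\|W^h-W^H\|_{1,\Omega}$. Writing
\[
W^h-W^H = K\bigl((\Phi_h\Lambda_h-\Phi_H\Lambda_H)-V(\Phi_h-\Phi_H)-(\mathcal{N}(\rho_{\Phi_h})\Phi_h-\mathcal{N}(\rho_{\Phi_H})\Phi_H)\bigr),
\]
I would rerun the argument of Theorem \ref{thm-nonlinear-linear} (with $(\Phi,\Phi_h)$ replaced by $(\Phi_H,\Phi_h)$), using the $H^{-1}$-continuity \eqref{K-property}, Lemma \ref{V} applied separately to $\Phi-\Phi_h$ and $\Phi-\Phi_H$, and Theorem \ref{priori-theorem} for $\|\Phi_h-\Phi_H\|_{0,\Omega}$ and $|\Lambda_h-\Lambda_H|$, to conclude
\[
\|W^h-W^H\|_{1,\Omega} \;\lesssim\; \tilde\kappa(h_0)\bigl(\|\Phi-\Phi_h\|_{1,\Omega}+\|\Phi-\Phi_H\|_{1,\Omega}\bigr).
\]

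For \eqref{lemma-bound-general-conc-1}, I would decompose
\[
\Phi-\Phi_h \;=\; (\Phi-W^H) + (W^H-P_hW^H) - P_h(W^h-W^H),
\]
bound $\|\Phi-W^H\|_{1,\Omega}\lesssim \kappa(H)\|\Phi-\Phi_H\|_{1,\Omega}$ via the internal estimate \eqref{thm1-neq1} from the proof of Theorem \ref{thm-nonlinear-linear}, use the $H^1$-stability \eqref{stable1} of $P_h$ together with the auxiliary bound above, and then apply the (reverse) triangle inequality to transfer these remainders to the $\|\cdot\|_{1,\Omega}$-equality claimed. For \eqref{lemma-bound-general-conc-2}--\eqref{lemma-bound-general-conc-3}, the comparison at the estimator level relies on the two pointwise identities
\[
J_e(\Phi_h)-\tilde J_e(P_hW^H) \;=\; \tfrac12\bigl[\nabla P_h(W^h-W^H)\cdot n\bigr],
\]
\[
\mathcal{R}_\tau(\Phi_h)-\tilde{\mathcal{R}}_\tau(P_hW^H) \;=\; -\tfrac12\Delta P_h(W^h-W^H)+(f^H-f^h),
\]
where $f^h=\Phi_h\Lambda_h-V\Phi_h-\mathcal{N}(\rho_{\Phi_h})\Phi_h$ (and similarly for $f^H$). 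The jump contribution is controlled by $\|W^h-W^H\|_{1,\Omega}$ using standard trace plus inverse inequalities on piecewise polynomials; the $\Delta P_h(W^h-W^H)$-part of the element residual, weighted by $h_\tau$, is bounded the same way via the inverse inequality $h_\tau\|\Delta v_h\|_{0,\tau}\lesssim \|v_h\|_{1,\tau}$. The forcing-difference piece $h_\tau\|f^H-f^h\|_{0,\tau}$ is estimated element-wise using the arguments underlying Lemma \ref{V} and Step 1. Applying $a^2-b^2=(a+b)(a-b)$ together with Cauchy--Schwarz to $\eta_h(\Phi_h,\Omega)^2-\tilde\eta_h(P_hW^H,\Omega)^2$ (and similarly for oscillations, using that the $L^2$-projection on polynomials is a contraction) finally yields \eqref{lemma-bound-general-conc-2} and \eqref{lemma-bound-general-conc-3}.

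The main obstacle is bounding $h_\tau\|V(\Phi_h-\Phi_H)\|_{0,\tau}$ (and the corresponding nonlocal/exchange-correlation pieces) element-by-element, since $V_{\rm loc}$ is only assumed $L^2$: the clean $H^{-1}$-dualities of Lemma \ref{V} cannot be invoked directly on a single element. Overcoming this requires combining H\"older's inequality on $\tau$ with an inverse inequality (as in the proof of Lemma \ref{estimator-stability}) together with the $L^2$-closeness of $\Phi_h$ and $\Phi_H$ to $\Phi$ from \eqref{L2-norm}, so that the accumulated local bounds still sum to $\tilde\kappa(h_0)\bigl(\|\Phi-\Phi_h\|_{1,\Omega}+\|\Phi-\Phi_H\|_{1,\Omega}\bigr)$ without losing powers of $h_\tau$.
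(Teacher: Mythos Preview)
Your approach is essentially correct and, for \eqref{lemma-bound-general-conc-1} and \eqref{lemma-bound-general-conc-2}, matches the paper's proof: the decomposition $\Phi-\Phi_h=(\Phi-W^H)+(W^H-P_hW^H)-P_h(W^h-W^H)$ together with \eqref{thm1-neq1} and the $H^1$-stability of $P_h$ is exactly what the paper uses, and for the oscillation the paper also reduces to bounding the contribution of $P_h(W^h-W^H)$ via the same element-wise residual splitting you propose (cf.\ \eqref{temp4}--\eqref{temp_osc}). The ``main obstacle'' you identify---controlling $\sum_\tau h_\tau^2\|V(\Phi_h-\Phi_H)\|_{0,\tau}^2$ with only $V_{\rm loc}\in L^2$---is precisely the delicate step the paper encounters in \eqref{rightterm}, and your outline (H\"older on $\tau$, inverse inequality on the piecewise-polynomial difference $\Phi_h-\Phi_H$, and the $L^2$/$L^3$-closeness from Theorem~\ref{priori-theorem} and Lemma~\ref{V}) is the right toolkit.

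Where you diverge from the paper is in \eqref{lemma-bound-general-conc-3}. You compare $\eta_h(\Phi_h,\cdot)$ and $\tilde\eta_h(P_hW^H,\cdot)$ directly through the pointwise identities for $\mathcal{R}_\tau$ and $J_e$, bounding the jump and Laplacian pieces by inverse/trace inequalities and the forcing difference $f^H-f^h$ as in the oscillation step. The paper instead bypasses the jump analysis entirely: it applies the \emph{lower} a~posteriori bound \eqref{boundary-lower} to the auxiliary linear problem with data $\mathcal{L}(W^h-W^H)$, obtaining
\[
\tilde\eta_h\bigl(P_h(W^h-W^H),\Omega\bigr)\;\lesssim\;\|(W^h-W^H)-P_h(W^h-W^H)\|_{1,\Omega}+\widetilde{\rm osc}_h\bigl(P_h(W^h-W^H),\Omega\bigr),
\]
and then recycles \eqref{thm1-neq1} and the oscillation bound \eqref{temp5} already established. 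This is tidier---it reduces \eqref{lemma-bound-general-conc-3} to \eqref{lemma-bound-general-conc-2} plus an $H^1$-error that is trivially controlled---whereas your route is more hands-on but equally valid. Both approaches ultimately bottleneck at the same local forcing estimate, so neither avoids the obstacle you flagged.
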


\begin{proof}
First, we obtain (\ref{lemma-bound-general-conc-1}) from \eqref{stable1}, (\ref{thm1-neq1}) and the identity
\begin{eqnarray*}\label{lem-bound-general2}
\Phi-\Phi_h = W^{H} - P_h W^{H} + P_h(W^{H} -W^{h})+\Phi-W^{H}.
\end{eqnarray*}

For the estimate of \eqref{lemma-bound-general-conc-2}, we get from $\Phi_h = P_h W^{H}+P_h(W^{h}-W^{H})$ that
\begin{eqnarray}\label{temp6}
\widetilde{{\rm osc}}_h(P_h W^{h},\Omega)\leq\widetilde{{\rm osc}}_h(P_h W^{H},\Omega)
+ \widetilde{{\rm osc}}_h(P_h (W^{h}-W^{H}),\Omega),
\end{eqnarray}
where $\widetilde{\rm osc}$ is given in Appendix.
Using \eqref{u-w} and the fact $\widetilde{{\rm osc}}_h(\Phi_h,\Omega)={\rm osc}_h(\Phi_h,\Omega)$,
we know that it is only necessary to estimate $\widetilde{{\rm osc}}_h(P_h (W^{h}-W^{H}),\Omega)$.

Since $\mathcal{L}W^{h}=\Phi_h \Lambda_h -V \Phi_h - \mathcal{N}(\rho_{\Phi_h})\Phi_h$
and $\mathcal{L}W^{H}=\Phi_H \Lambda_H -V \Phi_H-\mathcal{N}(\rho_{\Phi_H})\Phi_H$,
we obtain
$$
\mathcal{L}(W^{h} - W^{H})=\Phi_h \Lambda_h -\Phi_H \Lambda_H
+V(\Phi_H-\Phi_h) +\mathcal{N}(\rho_{\Phi_H})\Phi_H -\mathcal{N}(\rho_{\Phi_h})\Phi_h.
$$
Let $G=P_h(W^{h}-W^{H})$ and $\tilde{\mathcal{R}}_{\tau}(G)$ be defined by \eqref{residual-modelproblem}
with $\Gamma$ being replaced by $G$. We have
\begin{eqnarray*}
\tilde{\mathcal{R}}_{\tau}(G) = \Phi_h \Lambda_h -\Phi_H \Lambda_H +V(\Phi_H-\Phi_h)
+\mathcal{N}(\rho_{\Phi_H})\Phi_H-\mathcal{N}(\rho_{\Phi_h})\Phi_h -\mathcal{L}G
\end{eqnarray*}
and
\begin{eqnarray}\label{temp4}
&&\widetilde{{\rm osc}}^2_h(P_h(W^{h}-W^{H}),\Omega) = \sum_{\tau\in \mathcal{T}_h}
\widetilde{{\rm osc}}^2_h(G,\tau) = \sum_{\tau\in \mathcal{T}_h}h^2_{\tau}\|\tilde{\mathcal{R}}_{\tau}
(G)-\overline{\tilde{\mathcal{R}} _{\tau}(G)}\|^2_{0,\tau} \nonumber \\
&\leq & \sum_{\tau\in \mathcal{T}_h}h^2_{\tau}\|\tilde{\mathcal{R}}_{\tau}(G)
+\mathcal{L}G-\overline{(\tilde{\mathcal{R}}_{\tau}(G) +\mathcal{L}G)}\|^2_{0,\tau}
+\sum_{\tau\in\mathcal{T}_h}h^2_{\tau}\|\mathcal{L}G -\overline{\mathcal{L}G}\|^2_{0,\tau}. \quad
\end{eqnarray}

Using  the inverse inequality, and the fact that
$\Phi_h\Lambda_h$ and $\Phi_H\Lambda_H$ are piecewise
polynomials vectors over $\mathcal{T}_h$ and $\mathcal{T}_H$ respectively, \eqref{L2-norm}, and \eqref{V-N-property}, we may estimate as follows
\begin{eqnarray}\label{rightterm}
& &\big(\sum_{\tau\in
\mathcal{T}_h}h^2_{\tau}\|\tilde{\mathcal{R}}_{\tau}
(G)+\mathcal{L}G- \overline{(\tilde{\mathcal{R}}_{\tau}(G)
+\mathcal{L}G)}\|^2_{0,\tau}\big)^{1/2}\nonumber\\
&\lesssim& \sum_{\tau\in \mathcal{T}_h}h_{\tau}\big(\|V(\Phi_H-\Phi_h)\|_{0,\tau}+
\|\mathcal{N}(\rho_{\Phi_H})\Phi_H-\mathcal{N}(\rho_{\Phi_h}) \Phi_h\|_{0,\tau}\big)\nonumber\\
&\lesssim&\tilde{\kappa}(h_0)\left( \|\Phi - \Phi_h\|_{1, \Omega} + \|\Phi - \Phi_H\|_{1, \Omega}\right).
\end{eqnarray}
Combining the inverse inequality, \eqref{stable1} and (\ref{thm1-neq1}), we arrive at
\begin{eqnarray}\label{temp_osc}
&& \big(\sum_{\tau\in\mathcal{T}_h}h^2_{\tau}\|\mathcal{L}G -\overline{\mathcal{L}G}\|^2_{0,\tau}\big)^{1/2}
\lesssim \big(\sum_{\tau\in\mathcal{T}_h}h^2_{\tau}\|\mathcal{L}G \|^2_{0,\tau}\big)^{1/2} \lesssim \|G\|_{1, \Omega} \nonumber\\
&=& \|P_h(W^{h}-W^{H})\|_{1,\Omega} \lesssim \tilde{\kappa}(h_0)\left( \|\Phi - \Phi_h\|_{1,\Omega}
+ \|\Phi - \Phi_H\|_{1, \Omega}\right). \quad
\end{eqnarray}
Taking \eqref{temp4}, \eqref{rightterm} and \eqref{temp_osc} into account, we have
\begin{eqnarray}\label{temp5}
\widetilde{{\rm osc}}_h(P_h(W^{h}-W^{H}),\Omega)\lesssim \tilde{\kappa}(h_0)
\left( \|\Phi - \Phi_h\|_{1, \Omega} + \|\Phi - \Phi_H\|_{1,\Omega}\right),
\end{eqnarray}
which together with \eqref{temp6} leads to (\ref{lemma-bound-general-conc-2}).

Finally, we shall prove (\ref{lemma-bound-general-conc-3}).
We obtain from (\ref{boundary-lower}), (\ref{thm1-neq1}) and (\ref{temp5}) that
\begin{eqnarray*}\label{lem-bound-general5}
\tilde{\eta}_h(P_h( W^{h} - W^{H}), \Omega) &\lesssim&
\| (W^{h} - W^{H}) -P_h(W^{h} - W^{H})\|_{1, \Omega} \nonumber\\
&& +
\widetilde{{\rm osc}}_h(P_h(W^{h} - W^{H}), \Omega)  \nonumber\\
&\lesssim &\tilde{\kappa}(h_0)\left( \|\Phi - \Phi_h\|_{1,\Omega} + \|\Phi - \Phi_H\|_{1,\Omega}\right).
\end{eqnarray*}
This together with the fact
\begin{eqnarray*}
\tilde{\eta}_h(P_h W^{h}, \Omega) = \tilde{\eta}_h(P_h W^{H} + P_h( W^{h} - W^{H}), \Omega)
\end{eqnarray*}
leads to
\begin{eqnarray*}\label{proof-6}
\tilde{\eta}_h(P_h W^{h}, \Omega) = \tilde{\eta}_h(P_h W^{H}, \Omega)
+ \mathcal{O} (\tilde{\kappa}(h_0))\left( \|\Phi - \Phi_h\|_{1,\Omega} + \|\Phi -
\Phi_H\|_{1,\Omega}\right),
\end{eqnarray*}
which is nothing but (\ref{lemma-bound-general-conc-3}). This
completes the proof.
\end{proof}

For the convenience of the statement of the following results, we need some definition.
For $(\Lambda,\Phi)\in\Theta$ and $\Phi_h\in V_h$,
we say the equivalence class $[\Phi_h]$ approximate the equivalence class $[\Phi]$ if
$$
D_{\mathcal{H}}([\Phi_h],[\Phi]) <  D_{\mathcal{H}}([\Phi_h],[\tilde{\Phi}]), ~~~~\forall (\tilde{\Lambda},\tilde{\Phi})\in\Theta ~~\mbox{and}~ [\Phi] \neq [\tilde{\Phi}],
$$
the distance between sets
$X,Y\subset \mathcal{H}$ is defined by
\begin{eqnarray*}
D_{\mathcal{H}}(X,Y)=\sup_{\Phi \in X}\inf_{\Psi \in Y} \|\Phi-\Psi\|_{1,\Omega}.
\end{eqnarray*}

Thanks to Theorem \ref{convergence-boundary}, Lemma \ref{eta-etah}, and Lemma \ref{lemma-bound-general}, by using the similar argument in \cite{chen-he-zhou10, dai-he-zhou12, dai-xu-zhou08}, we get the following theorem.

\begin{theorem}\label{thm-error-reduction}{\em (error reduction)}
Let  $\theta \in (0, 1)$ and $h_0 \ll 1$.
Let $\{\Psi_k\}_{k\in\mathbb{N}_0}$ be a sequence of finite
element solutions  corresponding to a sequence of nested finite element spaces
$\{V_k\}_{k\in \mathbb{N}_0}$ produced by  Algorithm \ref{algorithm-AFEM-con-rate}. Assume  $[\Psi_{k_i}]$ is an approximation of some $[\Phi]$ with $\Phi$ being one solution of (\ref{problem-eigen-compact-L}),
    denote $k_{i+1}( > {k_i})$ the minimal index among all indexes $k(>k_i)$  which satisfy that $[\Psi_{k}]$ approximates  $[\Phi]$.
If Assumption {\bf A2} is true and $(\Lambda, \Phi)$ satisfies Assumption {\bf A3}, then
\begin{eqnarray}\label{error-reduction-neq1-1}
\|\Phi - \Phi_{k_{i+1}} \|_{1,\Omega}^2+\gamma  \eta_{k_{i+1}}^2(\Phi_{k_{i+1}},
\mathcal{T}_{k_{i+1}}) \leq \xi^2 \left(  \| \Phi- \Phi_{k_{i}}\|_{1,\Omega}^2
+\gamma  \eta_{k_{i}}^2(\Phi_{k_{i}} , \mathcal{T}_{k_{i}})\right)~~~~~
\end{eqnarray}
with $\Phi_{k_{i+1}}\in  X_{\Phi,k_{i+1}}$ and $\Phi_{k_{i}}\in
  X_{\Phi,k_i}$ satisfying the a priori error estimates \eqref{H1-norm} and
\eqref{L2-norm} when $h$ is replaced by $h_{k_{i+1}}$ and $h_{k_i}$, respectively,
 $\gamma>0$ and $\xi \in
(0,1)$ some constants depending only on the coercivity constant $c_a$, the shape regularity constant
$\gamma^{\ast}$, and the marking parameter $\theta$.
\end{theorem}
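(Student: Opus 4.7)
The plan is to reduce the nonlinear eigenvalue contraction to the error reduction for the auxiliary linear boundary value problem $\mathcal{L}W^{k_i}=\Phi_{k_i}\Lambda_{k_i}-V\Phi_{k_i}-\mathcal{N}(\rho_{\Phi_{k_i}})\Phi_{k_i}$, and then absorb the nonlinear perturbations from Lemma \ref{lemma-bound-general} by choosing $h_0$ so small that $\tilde{\kappa}(h_0)$ is negligible compared to the contraction factor of the linear theory.

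First I would bridge the marking. Algorithm \ref{algorithm-AFEM-con-rate} applies D\"orfler's strategy with parameter $\theta$ to $\{\eta_{k_i}(\Psi_{k_i},\tau)\}$, whereas the linear theory is naturally formulated for the representative $\Phi_{k_i}\in[\Psi_{k_i}]\cap X_{\Phi,k_i}$ selected by Theorem \ref{priori-theorem}. Since $\Phi_{k_i}=\Psi_{k_i}U_{k_i}$ for some orthogonal $U_{k_i}$, Lemma \ref{eta-etah} immediately delivers a D\"orfler-type condition
$$\sum_{\tau\in\mathcal{M}_{k_i}}\eta_{k_i}^2(\Phi_{k_i},\tau)\ \ge\ \theta'\,\eta_{k_i}^2(\Phi_{k_i},\mathcal{T}_{k_i})$$
with $\theta'=\theta/N^2$. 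Using $\eta_{k_i}(\Phi_{k_i},\cdot)=\tilde{\eta}_{k_i}(P_{k_i}W^{k_i},\cdot)$ and $\eta_{k_{i+1}}(\Phi_{k_{i+1}},\cdot)$ viewed as $\tilde{\eta}_{k_{i+1}}(P_{k_{i+1}}W^{k_i},\cdot)$ (up to perturbations), this is precisely the marking hypothesis required by the linear error reduction result.

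Second, I would invoke Theorem \ref{convergence-boundary} from the Appendix applied to $W^{k_i}$, which produces constants $\tilde{\gamma}>0$ and $\tilde{\xi}\in(0,1)$ depending only on $c_a$, $\gamma^{\ast}$ and $\theta'$ such that
\begin{equation*}
\|W^{k_i}-P_{k_{i+1}}W^{k_i}\|_{1,\Omega}^2+\tilde{\gamma}\,\tilde{\eta}_{k_{i+1}}^2(P_{k_{i+1}}W^{k_i},\mathcal{T}_{k_{i+1}})\ \le\ \tilde{\xi}^2\bigl(\|W^{k_i}-P_{k_i}W^{k_i}\|_{1,\Omega}^2+\tilde{\gamma}\,\tilde{\eta}_{k_i}^2(P_{k_i}W^{k_i},\mathcal{T}_{k_i})\bigr).
\end{equation*}
Then I would translate both sides into the nonlinear quantities: on the coarse level, Theorem \ref{thm-nonlinear-linear} and $P_{k_i}W^{k_i}=\Phi_{k_i}$ give $\|W^{k_i}-P_{k_i}W^{k_i}\|_{1,\Omega}=\|\Phi-\Phi_{k_i}\|_{1,\Omega}+\mathcal{O}(\tilde{\kappa}(h_0))\|\Phi-\Phi_{k_i}\|_{1,\Omega}$ and the estimator is already equal; on the fine level, Lemma \ref{lemma-bound-general} (with $H=h_{k_i}$ and $h=h_{k_{i+1}}$) converts $\|W^{k_i}-P_{k_{i+1}}W^{k_i}\|_{1,\Omega}$ and $\tilde{\eta}_{k_{i+1}}(P_{k_{i+1}}W^{k_i},\mathcal{T}_{k_{i+1}})$ into $\|\Phi-\Phi_{k_{i+1}}\|_{1,\Omega}$ and $\eta_{k_{i+1}}(\Phi_{k_{i+1}},\mathcal{T}_{k_{i+1}})$ respectively, up to additive errors of size $\mathcal{O}(\tilde{\kappa}(h_0))(\|\Phi-\Phi_{k_i}\|_{1,\Omega}+\|\Phi-\Phi_{k_{i+1}}\|_{1,\Omega})$.

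Finally, squaring, applying Young's inequality to the cross terms, and collecting, the substitution yields
\begin{equation*}
\|\Phi-\Phi_{k_{i+1}}\|_{1,\Omega}^2+\gamma\,\eta_{k_{i+1}}^2(\Phi_{k_{i+1}},\mathcal{T}_{k_{i+1}})\ \le\ \bigl(\tilde{\xi}^2+C\tilde{\kappa}(h_0)\bigr)\bigl(\|\Phi-\Phi_{k_i}\|_{1,\Omega}^2+\gamma\,\eta_{k_i}^2(\Phi_{k_i},\mathcal{T}_{k_i})\bigr)+C\tilde{\kappa}(h_0)\|\Phi-\Phi_{k_{i+1}}\|_{1,\Omega}^2
\end{equation*}
for a suitable $\gamma\in(0,\tilde{\gamma}]$. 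Choosing $h_0$ small enough so that $C\tilde{\kappa}(h_0)<\min\{1/2,(1-\tilde{\xi}^2)/2\}$ allows the last term to be absorbed into the left-hand side and the coefficient on the right-hand side to stay strictly below $1$, producing \eqref{error-reduction-neq1-1} with $\xi^2$ and $\gamma$ depending only on $c_a$, $\gamma^{\ast}$ and $\theta$. The main obstacle is bookkeeping: the coefficient $\gamma$ must be fixed \emph{after} the linear constants $\tilde{\gamma},\tilde{\xi}$ but \emph{before} the size of $h_0$ is chosen, so that the Young-inequality splitting of the $\mathcal{O}(\tilde{\kappa}(h_0))$ cross terms preserves a strict contraction; this is also where the nestedness $V_{k_i}\subset V_{k_{i+1}}$ and the minimality of $k_{i+1}$ enter to guarantee that $\Phi_{k_{i+1}}\in[\Psi_{k_{i+1}}]\cap X_{\Phi,k_{i+1}}$ lies sufficiently close to $\Phi_{k_i}$ for Lemma \ref{lemma-bound-general} to apply with a single $\tilde{\kappa}(h_0)$.
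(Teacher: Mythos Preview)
Your proposal is correct and follows essentially the same route as the paper: transfer D\"orfler marking from $\Psi_{k_i}$ to $\Phi_{k_i}$ via Lemma~\ref{eta-etah}, apply the linear contraction of Theorem~\ref{convergence-boundary} to the auxiliary problem $W^{k_i}$, translate both sides back via Theorem~\ref{thm-nonlinear-linear} and Lemma~\ref{lemma-bound-general}, and absorb the $\mathcal{O}(\tilde\kappa(h_0))$ remainders using Young's inequality and $h_0\ll1$. The only cosmetic discrepancy is that the paper takes $\gamma=\tilde\gamma/(1-\hat C_3\delta_1^{-1}\tilde\kappa^2(h_0))\ge\tilde\gamma$ rather than $\gamma\le\tilde\gamma$, and the minimality of $k_{i+1}$ plays no role in the argument --- what matters is only that $\mathcal{T}_{k_{i+1}}$ refines $\mathcal{T}_{k_i}$ through the marked set $\mathcal{M}_{k_i}$ and that both $\Phi_{k_i},\Phi_{k_{i+1}}$ lie in the respective neighborhoods $X_{\Phi,\cdot}\cap B_\delta$ guaranteed by Theorem~\ref{priori-theorem}.
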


\begin{proof}
For convenience,
we use $\Phi_h$, $\Phi_H$ to denote $\Phi_{k_{i+1}}$ and $\Phi_{k_i}$,
respectively. Then it is sufficient to prove that for $\Phi_h$
and $\Phi_H$, there holds,
\begin{eqnarray*}\label{thm-error-reduction-neq2}
\|\Phi-\Phi_h\|_{1,\Omega}^2 + \gamma \eta^2_{h}(\Phi_h, \Omega)\leq \xi^2
\big(\|\Phi-\Phi_H\|_{1,\Omega}^2 +\gamma \eta^2_{H}(\Phi_H,
\Omega)\big).
\end{eqnarray*}

Note that $\Phi_H$ and $\Psi_H$ are solutions of (\ref{problem-eigen-compact-dis}) and (\ref{problem-eigen-dis}), respectively. From the relationship of (\ref{problem-eigen-compact-dis}) and (\ref{problem-eigen-dis}), we have that if $[\Phi_H]$ and $[\Psi_H]$ approximate the same $[\Phi]$, then $\Phi_H =  \Psi_H U_H$ with $U_H$ being
some unitary transform.  Therefore, we obtain from Lemma \ref{eta-etah}  that D\"{o}rfler Marking strategy in Algorithm \ref{algorithm-AFEM-con-rate}
implies that there exists a constant $\theta'=\frac{\theta}{N^2} \in (0,1)$, such that
\begin{eqnarray*}
\sum_{\tau\in \mathcal{M}_H} \eta_H^2(\Phi_H, \tau) \geq \theta' \eta_H^2(\Phi_H, \Omega).
\end{eqnarray*}
Thus,   from
$ W^{H}=K(\Phi_H \Lambda_H-V \Phi_H-\mathcal{N}(\rho_{\Phi_H})\Phi_H)$ and $\Phi_H=P_H W^{H}$, we have that D\"{o}rfler strategy is satisfied for $W^{H}$
with $\theta' = \frac{\theta}{N^2}$.
So we conclude from Theorem \ref{convergence-boundary}
that there exist constants $\tilde{\gamma}>0$ and $\tilde{\xi}\in (0,1)$
satisfying
\begin{eqnarray}\label{error-reduction-neq-4}
&&  \|W^{H}-P_h W^{H}\|_{1,\Omega}^2 +\tilde{\gamma}\tilde{\eta}^2_h(P_h W^{H}, \Omega) \leq
\tilde{\xi}^2  \big( \|W^{H}- \Phi_H \|_{1,\Omega}^2+ \tilde{\gamma} \eta^2_H(\Phi_H, \Omega)\big),~~~~
\end{eqnarray}
where the fact $\tilde{\eta}_H(P_H W^H, \Omega) = \eta_H(\Phi_H, \Omega)$ is used.

From (\ref{thm1-neq1}), we get that there exists constant $\hat{C}_1>0$ such that
\begin{eqnarray}\label{convergence-neq2}
\Big(1+\hat{C}_1 \tilde{\kappa} (h_0)\Big) \|\Phi-\Phi_H\|_{1,\Omega}^2 + \tilde{\gamma}  \eta^2_{H}(\Phi_H, \Omega) \ge \|W^{H}- P_H W^H\|_{1,\Omega}^2
+ \tilde{\gamma}  \eta^2_H(\Phi_H, \Omega).  ~~~~~~~
\end{eqnarray}
We obtain from Lemma \ref{lemma-bound-general} and the Young's inequality  that there exists
constant $ \hat{C}_2 >0$ such that

\begin{eqnarray}\label{convergence-neq3}
\|\Phi-\Phi_h\|_{1,\Omega}^2 +  \tilde{\gamma} \eta^2_{h}(\Phi_h,\Omega) &\le& (1+\delta_1) \|W^{H}-P_h W^{H}\|_{1,\Omega}^2
+  (1+\delta_1) \tilde{\gamma} \tilde{\eta}^2_h(P_h W^{H}, \Omega) ~~~~\nonumber \\
&+&\hat{C}_2(1 + \delta_1^{-1}) \tilde{\kappa}^2(h_0) \big(\|\Phi-\Phi_h\|_{1,\Omega}^2 + \|\Phi-\Phi_H\|_{1,\Omega}^2\big).\quad
\end{eqnarray}
where $\delta_1 \in (0, 1)$ satisfies $(1+\delta_1) \xi <1$.

Combining~\eqref{error-reduction-neq-4}, \eqref{convergence-neq2} with \eqref{convergence-neq3}, we have that
\begin{eqnarray*}
&&\Big(1 - \hat{C}_2(1 + \delta_1^{-1}) \tilde{\kappa}^2(h_0)\Big)\|\Phi-\Phi_h\|_{1,\Omega}^2+ \tilde{\gamma} \eta^2_{h}(\Phi_h, \Omega) \nonumber\\
& \leq & \Big((1+\delta_1)\tilde\xi^2 +  (1+\delta_1)\tilde\xi^2 \hat{C}_1 \tilde{\kappa}(h_0)  + \hat{C}_2(1+\delta_1^{-1})\tilde{\kappa}^2(h_0)\Big) \|\Phi-\Phi_H\|_{1,\Omega}^2 \nonumber\\
&&+(1+\delta_1))\tilde\xi^2 \tilde{\gamma} \eta^2_{H}(\Phi_H, \Omega). \quad
\end{eqnarray*}
Since $h_0\ll 1$ implies ${\tilde k}(h_0)\ll 1$, there holds
\begin{eqnarray*}
&&\|\Phi-\Phi_h\|_{1,\Omega}^2+\frac{\tilde{\gamma}}{1-\hat{C}_3\delta_1^{-1} \tilde{\kappa}^2(h_0)}\eta^2_{h}(\Phi_h, \Omega) \\
&\leq& \frac{(1+\delta_1) \tilde \xi^2+\hat{C}_3 \tilde{\kappa}(h_0)}{1-\hat{C}_3\delta_1^{-1}\tilde{\kappa}^2(h_0)}
\left(\|\Phi-\Phi_H\|_{1,\Omega}^2 +\frac{\tilde\xi^2\tilde{\gamma}}{(1+\delta_1) \tilde \xi^2
+\hat{C}_3 \tilde{\kappa}(h_0)}\eta^2_{H}(\Phi_H, \Omega)\right),
\end{eqnarray*}
with $\hat{C}_3$ some constant depending on $\hat{C}_1$ and $\hat{C}_2$.
Note that $h_0\ll 1$ implies ${\tilde k}(h_0)\ll 1$, we see that the constant $\xi$ defined by
\begin{eqnarray*}
\xi = \left(\frac{ (1+ \delta_1) \tilde{\xi}^2 +  \hat{C}_3
\tilde{\kappa}(h_0)}{1 - \hat{C}_3 \delta_1^{-1} \tilde{\kappa}^2(h_0)}\right)^{1/2}
\end{eqnarray*}
satisfies $\xi \in (0,1)$ when $h_0\ll 1$.

Finally, we arrive at (\ref{error-reduction-neq1-1}) by using the fact that
\begin{eqnarray}\label{gamma}
\frac{ \tilde{\xi}^2 \tilde{\gamma}}{ (1+ \delta_1)
\tilde{\xi}^2 + \hat{C}_3\tilde{\kappa}(h_0) }<\gamma
\quad{\rm with}\quad
\gamma = \frac{\tilde{\gamma}}{1 - \hat{C}_3 \delta_1^{-1}\tilde{\kappa}^2(h_0)}.
\end{eqnarray}
This completes the proof.
\end{proof}

We have from Theorem \ref{theorem-convergence-adaptive}  that
if $\{\Psi_k\}$ is obtained by Algorithm \ref{algorithm-AFEM-con-rate},
then there exists a subsequence $\{[\Psi_{k_i}]\}$ that converge to some equivalent class $[\Phi]$,
where $\Phi$ is a solution of (\ref{problem-eigen-compact-L}).
Here, a sequence $\{[\Psi_{k_i}]\}$ converges to a equivalent class $[\Phi]$ means that there exist
unitary  matrices $U_{k_i}\in\mathcal{O}^{N\times N}$, such that
$$
\lim_{i\rightarrow\infty}\Psi_{k_i} U_{k_i}=\Phi.
$$
Therefore, combining Theorem \ref{thm-error-reduction}, we  have the following theorem.
\begin{theorem}\label{theorem-convergence-rate} {\em (convergence rate)}
Let  $\theta \in (0, 1)$ and $h_0 \ll 1$.
Let $\{\Psi_k\}_{k\in\mathbb{N}_0}$ be a sequence of finite element approximations
obtained by Algorithm \ref{algorithm-AFEM-con-rate} and $\{[\Psi_{k_i}]\}$ be the subsequence
that converges to some  $[\Phi]$, where $\Phi$ is a solution of (\ref{problem-eigen-compact-L}).
If Assumptions {\bf A2} and {\bf A3} are satisfied, then there holds
%
\begin{eqnarray}\label{error-reduction-neq1}
\|\Phi - \Phi_{k_{i+1}} \|_{1,\Omega}^2+\gamma  \eta_{k_{i+1}}^2(\Phi_{k_{i+1}},
\mathcal{T}_{k_{i+1}}) \leq \xi^2 \left(  \| \Phi- \Phi_{k_{i}}\|_{1,\Omega}^2
+\gamma  \eta_{k_{i}}^2(\Phi_{k_{i}} , \mathcal{T}_{k_{i}})\right),~~~~
\end{eqnarray}
where $\Phi_{k_{i+1}}\in  X_{\Phi,k_{i+1}}$ and $\Phi_{k_{i}}\in
  X_{\Phi,k_i}$ satisfy the a priori error estimates (\ref{H1-norm}) and (\ref{L2-norm}) with $h$ being replaced by $h_{k_{i+1}}$ and $h_{k_i}$, respectively,
 $\gamma>0$ and $\xi \in (0,1)$ are constants depending only on the coercivity  constant $c_a$,
the shape regularity constant $\gamma^{\ast}$ and the marking parameter $\theta$. Therefore, the $k_m$-th iteration solution of
Algorithm \ref{algorithm-AFEM-con-rate} satisfies
\begin{eqnarray}\label{error-reduction-neq2}
\|\Phi - \Phi_{k_{m}} \|_{1,\Omega}^2+\gamma  \eta_{k_{m}}^2(\Phi_{k_{m}},
\mathcal{T}_{k_{m}}) \leq \xi^{2m} \left(  \| \Phi- \Phi_{k_{0}}\|_{1,\Omega}^2
+\gamma  \eta_{k_{0}}^2(\Phi_{k_{0}} , \mathcal{T}_{k_{0}})\right),
\end{eqnarray}
and
\begin{eqnarray}\label{error-reduction-neq3}
|\Lambda - \Lambda_{k_{m}} |  \lesssim \xi^{2m}.
\end{eqnarray}

In further, we have
\begin{eqnarray}\label{error-reduction-space}
d_{\mathcal{H}}(\Theta_{k_m}, \Theta)   \lesssim \xi^{2m}.
\end{eqnarray}
\end{theorem}

%
%

\subsection{Complexity}
Finally, we study the complexity of Algorithm 4.1 in a class of functions. Following \cite{cascon-kreuzer-nochetto-siebert08, dai-xu-zhou08}, define
\begin{eqnarray*}
\mathcal{A}_{\gamma}^s=\{\Psi \in \mathcal{H}: |\Psi|_{s,\gamma} < \infty \},
\end{eqnarray*}
where $\gamma>0$ is some constant and
\begin{eqnarray*}
|\Psi|_{s,\gamma} = \sup_{\varepsilon
>0}\varepsilon \inf_{\{\mathcal{T} \subset \mathcal{T}_0:~
\inf_{\Psi_{\mathcal{T}} \in V_{\mathcal{T}}}(\|\Psi-\Psi_{\mathcal{T}}\|_{1,\Omega}^2 + (\gamma +1)
{\rm osc}^2_\mathcal{T}(\Psi_{\mathcal{T}}, \mathcal{T}))^{1/2} \leq \varepsilon\}} \big(\#\mathcal{T} - \#
\mathcal{T}_0\big)^s
\end{eqnarray*}
and  $\mathcal{T}\subset \mathcal{T}_0$ means $\mathcal{T}$ is a refinement of $\mathcal{T}_0$.
We see that, for all $\gamma>0$,
$\mathcal{A}_{\gamma}^s = \mathcal{A}_1^s$. For simplicity,
we use $\mathcal{A}^s$ to stand for $\mathcal{A}_1^s$, and use $|\Psi|_{s}$ to denote
 $|\Psi|_{s, \gamma}$. So $\mathcal{A}^s$ is the class of functions that can be approximated within a given tolerance
$\varepsilon$ by continuous piecewise polynomial functions over a
partition $\mathcal{T}_k$ with number of degrees of freedom satisfying
$\#\mathcal{T}_k-\# \mathcal{T}_0 \lesssim \varepsilon^{-1/s} |\Psi|_{s}^{1/s}$.

\begin{lemma}\label{optimal-marking}
Suppose    $\theta \in (0, 1)$ and $h_0\ll 1$. Let $\Psi_H$ and $\Psi_h$ be the solutions of \eqref{problem-eigen-dis} over a conforming mesh
$\mathcal{T}_H$ and its refinement $\mathcal{T}_h$, and $[\Psi_H]$ and $[\Psi_h]$ approximate the same
solution class $[\Phi]$,
where $\Phi$ is a solution of (\ref{problem-eigen-compact-L}).
Suppose Assumption {\bf A2} is true. If for some $\Phi \in [\Phi]$ satisfying \eqref{assumption-a3}, we have
\begin{eqnarray}\label{temp-3}
\|\Phi  - \Phi_h\|_{a,\Omega}^2 + \gamma_{\ast} {\rm osc}^2_h(\Phi_h,\Omega)
\leq \beta_{\ast}^2  \big(\|\Phi -\Phi_H\|_{a,\Omega}^2 + \gamma_{\ast} {\rm osc}^2_H(\Phi_H,\Omega)\big)
\end{eqnarray}
with   $\Phi_{h}\in  X_{\Phi, h}$ and $\Phi_{H}\in
  X_{\Phi, H}$ satisfying the a priori error estimates \eqref{H1-norm} and
\eqref{L2-norm} when $h$ is replaced by $h$ and $H$, respectively,
 $\gamma_{\ast}>0$ and $\beta_{\ast}\in
(0,\sqrt{\frac{1}{2}})$.
Then, the set $\mathcal{R}=\mathcal{R}_{\mathcal{T}_H\rightarrow\mathcal{T}_h}$ satisfies the following inequality
\begin{eqnarray*}\label{optimal-marking-neq1}
\sum_{\tau \in \mathcal{R}} \eta^2_H(\Phi_H, \tau) \geq \hat{\theta}  \sum_{\tau \in \mathcal{T}_H} \eta^2_H(\Phi_H, \tau),
\end{eqnarray*}
here
$\hat{\theta} =\frac{\tilde{C}_2(1-2\tilde{\beta}_{\ast}^2)}
{\tilde{C}_0(\tilde{C}_1+(1+2C_{\ast}^2\tilde{C}_1)
\tilde{\gamma}_{\ast})}$ ,
with $\tilde{C}_0, \tilde{\beta_{\ast}}, C_{\ast}$ and
$\tilde{\gamma_{\ast}}$ being
constants defined in the proof. 
\end{lemma}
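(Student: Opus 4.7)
The plan is to reduce the claim to the classical optimal marking theorem for linear self-adjoint elliptic problems (Stevenson, Cascon--Kreuzer--Nochetto--Siebert) by routing through the linearized auxiliary boundary-value problems $\mathcal{L}W^H=\Phi_H\Lambda_H-V\Phi_H-\mathcal{N}(\rho_{\Phi_H})\Phi_H$ and $\mathcal{L}W^h=\Phi_h\Lambda_h-V\Phi_h-\mathcal{N}(\rho_{\Phi_h})\Phi_h$. The identities $\Phi_H=P_HW^H$ and $\Phi_h=P_hW^h$ (see \eqref{u-w}) say that $\Phi_H$, $\Phi_h$ are the Galerkin projections of $W^H$, $W^h$, so the linear theory in the Appendix applies directly once the discrepancy between $W^H$ and $W^h$ is controlled. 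The strict hypothesis $\beta_*^2<1/2$ is exactly what is needed so that, after absorbing the nonlinear perturbations, the corresponding linear-problem contraction constant $\tilde\beta_*$ still satisfies $1-2\tilde\beta_*^2>0$.

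I would proceed in three steps. Step one: apply Lemma~\ref{lemma-bound-general} together with \eqref{thm1-neq1} and the identities $\Phi_\bullet=P_\bullet W^\bullet$ to rewrite the hypothesis \eqref{temp-3} in terms of the linearized quantities $\|W^H-P_\bullet W^H\|_{1,\Omega}^2+\tilde\gamma_*\widetilde{\mathrm{osc}}_\bullet^2(P_\bullet W^H,\Omega)$, picking up only $O(\tilde\kappa(h_0))$ perturbations and producing a slightly inflated contraction constant $\tilde\beta_*\in(0,1/\sqrt{2})$ and enlarged weight $\tilde\gamma_*>\gamma_*$, both small by \eqref{def-kappa-}. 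Step two: run the linear optimal marking argument on the problem $\mathcal{L}W^H=\cdots$; its core is Galerkin orthogonality $\|W^H-P_HW^H\|_a^2=\|W^H-P_hW^H\|_a^2+\|P_hW^H-P_HW^H\|_a^2$, combined with the localized efficiency from Theorem~\ref{upper-bound-theorem} applied on the unrefined set $\mathcal{T}_H\setminus\mathcal{R}$ (where the local meshes and polynomial spaces coincide), which together with the upper bound yield a D\"orfler-type inequality for $\mathcal{R}$ against the linear estimator $\tilde\eta_H(P_HW^H,\cdot)$ with threshold expressed through $\tilde C_0,\tilde C_1,\tilde C_2,C_*,\tilde\gamma_*,\tilde\beta_*$. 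Step three: transfer back using the identity $\tilde\eta_H(P_HW^H,\tau)=\eta_H(\Phi_H,\tau)$ (immediate from the definitions, as already observed in the proof of Theorem~\ref{theorem-posteriori}), which converts the linear conclusion into the desired bound on $\sum_{\tau\in\mathcal{R}}\eta_H^2(\Phi_H,\tau)$.

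The main obstacle will be the bookkeeping of the nonlinear perturbation terms generated whenever one switches between $\Phi_\bullet$-based and $P_\bullet W^H$-based quantities; each such switch contributes a factor of order $O(\tilde\kappa(h_0))(\|\Phi-\Phi_H\|_{1,\Omega}+\|\Phi-\Phi_h\|_{1,\Omega})$, and these must be absorbed into the dominant side without crossing the delicate threshold $\beta_*^2<1/2$. The assumption $h_0\ll 1$, which gives $\tilde\kappa(h_0)\ll 1$, is what makes this absorption possible, and tracking the resulting constants through Theorem~\ref{upper-bound-theorem} and Lemma~\ref{lemma-bound-general} produces the explicit formula $\hat\theta=\tilde C_2(1-2\tilde\beta_*^2)/[\tilde C_0(\tilde C_1+(1+2C_*^2\tilde C_1)\tilde\gamma_*)]$ stated in the lemma.
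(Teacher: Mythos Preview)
Your proposal is correct and follows essentially the same route as the paper: linearize via $W^H$ and Lemma~\ref{lemma-bound-general}, absorb the $O(\tilde\kappa(h_0))$ perturbations into slightly inflated constants $\tilde\beta_*,\tilde\gamma_*$, run the standard CKNS optimal-marking argument on the linear problem, then translate back using $\tilde\eta_H(P_HW^H,\tau)=\eta_H(\Phi_H,\tau)$. One small imprecision in your Step~two: the paper does not invoke localized \emph{efficiency} on the unrefined set $\mathcal{T}_H\setminus\mathcal{R}$, but rather the global lower bound \eqref{boundary-lower}, Galerkin orthogonality, and the localized \emph{upper} bound of Theorem~\ref{localized-upper-bound} on the refined set $\mathcal{R}$, together with an oscillation-perturbation estimate on $\mathcal{T}_H\cap\mathcal{T}_h$; this yields the same D\"orfler inequality but the mechanism is slightly different from what you describe.
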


\begin{proof}
For
$W^{H} = K\Big( \Phi_H \Lambda_{H} - V\Phi_H-\mathcal{N}(\rho_{\Phi_H})\Phi_H \Big)$,
we observe from Lemma \ref{lemma-bound-general} that
\begin{eqnarray*}
\|\Phi  - \Phi_h  \|_{1,\Omega}&=& \|W^{H}-P_h W^{H} \|_{1,\Omega} \nonumber\\
&&+ \mathcal{O} (\tilde{\kappa}(h_0)) \left( \|W^{H}-P_H W^{H} \|_{1,\Omega}+ \|W^{H}-P_h W^{H}\|_{1,\Omega}\right),\nonumber\\
{\rm osc}_h(\Phi_h,\Omega)&=& \widetilde{{\rm osc}}_h(P_h W^{H}, \Omega) +
\mathcal{O} (\tilde{\kappa}(h_0)) \left( \|W^{H}-P_H W^{H}  \|_{1,\Omega} + \|W^{H}-P_H W^{H} \|_{1,\Omega}\right).
\end{eqnarray*}

Proceeding the similar procedure as in the proof of Theorem  \ref{thm-error-reduction}, we have
\begin{eqnarray}\label{lemma-complexity-eigen-bound-conc2}
&& \|W^H-P_h W^H\|_{a,\Omega}^2 + \tilde{\gamma}_{\ast} \widetilde{{\rm osc}}_h^2 (P_h W^H, \Omega) \nonumber\\
&\leq & \tilde{\beta}_{\ast}^2 \big(\|W^H-P_H W^H\|_{a,\Omega}^2 +
\tilde{\gamma}_{\ast} \widetilde{{\rm osc}}_H^2 (P_H W^H, \Omega)\big)
\end{eqnarray}
with
\begin{eqnarray}\label{complexity-eigen-boundary-beta-gamma}
\tilde{\beta}_{\ast} = \left(\frac{\beta_{\ast}^2(1+\delta_1)+\hat{C}_4
\tilde{\kappa}(h_0)}{1-\hat{C}_4 \delta_1^{-1} \tilde{\kappa}^2(h_0)}\right)^{1/2}, \quad
\tilde{\gamma}_{\ast} =
 \frac{\gamma_{\ast}}{1 -\hat{C}_4 \delta_1^{-1} \tilde{\kappa}^2(h_0)},
\end{eqnarray}
where $\hat{C}_4 $ is some positive constant and $\delta_1 \in (0, 1)$ is some constant as shown
in the proof of Theorem \ref{thm-error-reduction}.

Set
$\tilde{C}_0=\max\{1,\frac{\tilde{C}_3}{\tilde{\gamma}_{\ast}}\}$,
we get from (\ref{boundary-lower}) that
\begin{eqnarray*}
(1-2\tilde{\beta_{\ast}}^2) \tilde{C}_2 \tilde{\eta}^2_{H}(P_H W^{H}, \Omega) \leq (1 -
2\tilde{\beta_{\ast}}^2) \left(\|W^{H} - P_H W^{H}\|^2_{a, \Omega} + \tilde{C}_3 \widetilde{{\rm osc}}_H^2(P_H W^{H},\Omega)
\right) \nonumber\\
\leq  \tilde{C}_0 (1 - 2 \tilde{\beta_{\ast}}^2) \left(\|W^{H} - P_H W^{H}\|^2_{a, \Omega} + \tilde{\gamma}_{\ast}
\widetilde{{\rm osc}}_H^2(P_H W^{H},\Omega)\right),
\end{eqnarray*}
which together with \eqref{lemma-complexity-eigen-bound-conc2}
produces
\begin{eqnarray}\label{optimal-mariking-neq1}
\frac{\tilde{C}_2}{\tilde{C}_0} (1-2\tilde{\beta_{\ast}}^2)  \sum_{\tau \in \mathcal{T}_H} \tilde{\eta}^2_H(P_H W^{H}, \tau) \leq
\big(\|W^{H} - P_H W^{H}\|^2_{a, \Omega} +
\tilde{\gamma}_{\ast}\widetilde{{\rm osc}}_H^2(P_H W^{H},\Omega)  \nonumber\\
 -\|W^{H} - P_h W^{H}\|^2_{a, \Omega} -2 \tilde{\gamma}_{\ast}\widetilde{{\rm osc}}_h^2(P_h W^{H},\Omega)\big).~~~~~~
\end{eqnarray}
Thus using equality
\begin{eqnarray*}
\|W^{H}-P_H W^{H}\|_{a,\Omega}^2-\|W^{H}-P_h W^{H}\|_{a,\Omega}^2 = \|P_HW^{H}-P_h W^{H}\|_{a,\Omega}^2
\end{eqnarray*}
and Theorem \ref{localized-upper-bound}, we obtain that
\begin{eqnarray}\label{temp-4}
\|W^{H}-P_H W^{H}\|_{a,\Omega}^2-\|W^{H}-P_h W^{H}\|_{a,\Omega}^2\leq \tilde{C}_1 \sum_{\tau \in \mathcal{R}}\tilde{\eta}_H^2(P_H W^{H},\tau).
\end{eqnarray}
By  the triangle inequality,  the inverse inequality, and the Young's inequality,
we get
\begin{eqnarray*}
\sum_{\tau \in \mathcal{T}_H\cap \mathcal{T}_h}\widetilde{{\rm osc}}_H^2(P_H W^{H},\tau)\leq 2 \sum_{\tau \in \mathcal{T}_H\cap
\mathcal{T}_h}\widetilde{{\rm osc}}_h^2(P_h W^{H},\tau) +2C_{\ast}^2 \|P_H W^{H}-P_h W^{H}\|_{a,\Omega}^2
\end{eqnarray*}
where $C_{\ast}$ is a positive constant depending on the shape regularity constant $\gamma^{\ast}$.
Hence, using the fact
\begin{eqnarray*}
\widetilde{{\rm osc}}_H^2(P_H W^{H},\tau) \leq  \tilde{\eta}^2_H(P_H W^{H}, \tau)\quad \forall \tau \in \mathcal{T}_H,
\end{eqnarray*}
we may estimate as follows
\begin{eqnarray}\label{optimal-mariking-neq2}
&& \widetilde{{\rm osc}}_H^2(P_H W^{H},\Omega)
-2 \widetilde{{\rm osc}}_h^2(P_h W^{H},\Omega)\nonumber\\
&\leq&  \sum_{\tau \in \mathcal{R}} \tilde{\eta}^2_H(P_H W^{H}, \tau) + \sum_{\tau \in
\mathcal{T}_H\cap \mathcal{T}_h}\widetilde{{\rm osc}}_H^2(P_H
W^{H},\tau) -2 \sum_{\tau \in \mathcal{T}_H\cap \mathcal{T}_h}
\widetilde{{\rm osc}}_h^2(P_h W^{H},\tau) \nonumber\\
&\leq&  \sum_{\tau \in \mathcal{R}} \tilde{\eta}^2_H(P_H W^{H}, \tau)
+ 2 C_{\ast}^2  \|P_H W^{H}-P_h W^{H}\|^2_{a,\Omega} \nonumber\\
&\leq& (1 +  2 C_{\ast}^2 \tilde{C}_1)  \sum_{\tau \in \mathcal{R}} \tilde{\eta}^2_H(P_H W^{H}, \tau).
\end{eqnarray}
Combining \eqref{optimal-mariking-neq1}, \eqref{temp-4} and (\ref{optimal-mariking-neq2}), we then arrive at
\begin{eqnarray*}
~~~~\frac{\tilde{C}_2}{\tilde{C}_0} (1-2\tilde{\beta_{\ast}}^2) \sum_{\tau \in \mathcal{T}_H} \tilde{\eta}^2_H(P_H W^{H}, \tau)
\leq (\tilde{C}_1 + (1 + 2 C_{\ast}^2 \tilde{C}_1) \tilde{\gamma}_{\ast} ) \sum_{\tau \in \mathcal{R}} \tilde{\eta}^2_H(P_H W^{H}, \tau),
\end{eqnarray*}
that is,
\begin{eqnarray*}
\sum_{\tau \in \mathcal{R}} \eta^2_H(\Phi_H, \tau) \geq \hat{\theta}  \sum_{\tau \in \mathcal{T}_H}
\eta^2_H(\Phi_H, \tau)
\end{eqnarray*}
with
\begin{eqnarray*}
\hat{\theta} & = &
\frac{\tilde{C}_2(1-2\tilde{\beta_{\ast}}^2)}{\tilde{C}_0(
\tilde{C}_1  + (1 +  2 C_{\ast}^2 \tilde{C}_1)
\tilde{\gamma}_{\ast})}.
\end{eqnarray*}
This completes the proof.
\end{proof}

Similar for  the boundary value problem \cite{cascon-kreuzer-nochetto-siebert08} and the  linear eigenvalue problems \cite{dai-he-zhou12}, to analyze the complexity of Algorithm \ref{algorithm-AFEM}, we need more requirements than for the convergence rate.

\begin{assumption}\label{assump-5.1}
\begin{enumerate}
\item  The marking parameter $\theta$ satisfies $\theta \in (0, \theta_{\ast})$, with
\begin{eqnarray*}
\theta_{\ast} = \frac{1}{N^2} \frac{C_2 \gamma}{C_3 ( C_1 + (1 + 2C_{\ast}^2 C_1)\gamma )}).
\end{eqnarray*}
\item The marked $\mathcal{M}_{h_k}$ satisfy (\ref{mark-strategy}) with minimal cardinality.
\item The distribution of refinement edges on $\mathcal{T}_{h_0}$ satisfies condition (b) of section 4
in \cite{stevenson08}.
\end{enumerate}
\end{assumption}

We mention that D\"{o}rfler  Marking Strategy selects the marked set
$\mathcal{M}_k$ with minimal cardinality.
\begin{lemma}
  Let  $\theta \in (0, 1)$ and $h_0\ll 1$, $\{\Psi_k\}_{k\in\mathbb{N}_0}$ be a sequence of finite element solutions
corresponding to a sequence of nested finite element
spaces $\{V_k\}_{k\in \mathbb{N}_0}$ produced by  Algorithm \ref{algorithm-AFEM-con-rate}.
 Suppose Assumption {\bf A2} is true.
If~$[\Psi_k]$ approximates the solution class
$[\Phi]$,
where $\Phi$ is a solution of (\ref{problem-eigen-compact-L}), then
 for any $\Phi\in [\Phi] \cap \mathcal{A}^s$ satisfying \eqref{assumption-a3}, we have
\begin{eqnarray}\label{DOF}
\# \mathcal{M}_k \lesssim \left(\|\Phi-\Phi_k\|_{a,\Omega}^2 +
\gamma osc^2_k(\Phi_k, \Omega)\right)^{-1/2s} |\Phi|_{s}^{1/s},
\end{eqnarray}
where $\Phi_{k}\in   X_{\Phi,k}$ satisfies the a priori error estimates \eqref{H1-norm} and
\eqref{L2-norm} with $h$ being replaced by  $h_{k_i}$, and the hidden constant depends on the discrepancy between the
marking parameter  $\frac{1}{N^2} \frac{C_2 \gamma}{C_3( C_1 + (1 +
2C^2_{\ast}C_1)\gamma)}$ and $\theta$.
\end{lemma}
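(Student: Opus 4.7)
The plan is to follow the by--now standard optimal--marking argument of Stevenson / Cascon--Kreuzer--Nochetto--Siebert, adapted through the perturbation framework already set up in this section, and to use Lemma~\ref{optimal-marking} as the bridge between approximability in $\mathcal{A}^s$ and Dörfler's criterion.

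First I would fix a tolerance $\varepsilon>0$ that will be chosen later as a small multiple of the current total error, and invoke the definition of $\mathcal{A}^s$ to produce a conforming refinement $\mathcal{T}_\varepsilon\subset\mathcal{T}_0$ together with $\Phi_\varepsilon\in V_\varepsilon$ such that
\begin{equation*}
\|\Phi-\Phi_\varepsilon\|_{1,\Omega}^2+(\gamma+1)\,\mathrm{osc}_\varepsilon^2(\Phi_\varepsilon,\mathcal{T}_\varepsilon)\le\varepsilon^2,\qquad \#\mathcal{T}_\varepsilon-\#\mathcal{T}_0\lesssim\varepsilon^{-1/s}|\Phi|_s^{1/s}.
\end{equation*}
Then I would consider the smallest common refinement $\mathcal{T}_\star:=\mathcal{T}_k\oplus\mathcal{T}_\varepsilon$; by the standard overlay estimate $\#\mathcal{T}_\star-\#\mathcal{T}_k\le\#\mathcal{T}_\varepsilon-\#\mathcal{T}_0$. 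Let $\Psi_\star$ be a discrete ground state on $\mathcal{T}_\star$ whose equivalence class $[\Psi_\star]$ approximates $[\Phi]$, and pick $\Phi_\star\in[\Psi_\star]\cap X_{\Phi,\star}$ via Theorem~\ref{priori-theorem}. Using the quasi-optimal $H^1$ estimate \eqref{H1-norm} and the standard oscillation Lipschitz-type bound, I would show that on the finer mesh $\mathcal{T}_\star$ we have
\begin{equation*}
\|\Phi-\Phi_\star\|_{1,\Omega}^2+\gamma_\ast\,\mathrm{osc}_\star^2(\Phi_\star,\Omega)\lesssim\varepsilon^2.
\end{equation*}

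Next I would calibrate $\varepsilon$: choose $\varepsilon^2=\mu^2\bigl(\|\Phi-\Phi_k\|_{1,\Omega}^2+\gamma\,\mathrm{osc}_k^2(\Phi_k,\Omega)\bigr)$ with $\mu\in(0,1)$ small enough that the resulting quasi-orthogonality constant $\beta_\ast$ lies strictly below $1/\sqrt2$ and moreover the value $\hat\theta$ produced by Lemma~\ref{optimal-marking} exceeds $\theta$; this is possible precisely because the hypothesis $\theta<\tfrac{C_2\gamma}{C_3(C_1+(1+2C_\ast^2C_1)\gamma)}$ leaves a positive gap between the target $\hat\theta$ and $\theta$. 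Applying Lemma~\ref{optimal-marking} to the pair $(\Phi_k,\Phi_\star)$ on the meshes $(\mathcal{T}_k,\mathcal{T}_\star)$ yields that the set of refined elements $\mathcal{R}_{\mathcal{T}_k\to\mathcal{T}_\star}$ satisfies the Dörfler bulk condition for $\Phi_k$ with parameter $\hat\theta>\theta$.

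Since $\mathcal{M}_k$ is by construction a \emph{minimal} subset of $\mathcal{T}_k$ satisfying \eqref{mark-strategy} for parameter $\theta$, we conclude $\#\mathcal{M}_k\le\#\mathcal{R}_{\mathcal{T}_k\to\mathcal{T}_\star}\le\#\mathcal{T}_\star-\#\mathcal{T}_k\le\#\mathcal{T}_\varepsilon-\#\mathcal{T}_0\lesssim\varepsilon^{-1/s}|\Phi|_s^{1/s}$. Substituting the chosen $\varepsilon$ gives \eqref{DOF}, with the hidden constant blowing up only as $\theta\uparrow\tfrac{C_2\gamma}{C_3(C_1+(1+2C_\ast^2C_1)\gamma)}$, i.e.\ as the gap that determines $\mu$ closes.

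The main obstacle, as in the linear theory, is the calibration of $\varepsilon$ together with the verification of the hypotheses of Lemma~\ref{optimal-marking}: one must propagate the approximability information from $\Phi$ on $\mathcal{T}_\varepsilon$ to the nonlinear discrete solution $\Phi_\star$ on the overlay $\mathcal{T}_\star$, and simultaneously control the oscillation term and the nonlinearity-induced perturbations $\mathcal{O}(\tilde\kappa(h_0))$ from Lemma~\ref{lemma-bound-general}. The smallness assumption $h_0\ll1$ is what absorbs these perturbations into the constants and keeps $\tilde\beta_\ast<1/\sqrt2$.
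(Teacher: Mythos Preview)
Your proposal is correct and follows essentially the same route as the paper: invoke $\mathcal{A}^s$ at a tolerance proportional to the current total error, pass to the overlay $\mathcal{T}_\star=\mathcal{T}_k\oplus\mathcal{T}_\varepsilon$, verify the reduction hypothesis \eqref{temp-3} of Lemma~\ref{optimal-marking}, and use minimality of $\mathcal{M}_k$ together with the overlay cardinality bound. The only notable difference is in how the reduction on the overlay is established: the paper routes explicitly through the linearized auxiliary function $W^\varepsilon=K(\Phi_\varepsilon\Lambda_\varepsilon-V\Phi_\varepsilon-\mathcal{N}(\rho_{\Phi_\varepsilon})\Phi_\varepsilon)$, exploiting Galerkin orthogonality for the linear problem and then translating back via the perturbation identities of Lemma~\ref{lemma-bound-general}, whereas you appeal directly to the quasi-optimality estimate~\eqref{H1-norm} and an oscillation Lipschitz bound. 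Both approaches lead to the same conclusion; the paper's detour through $W^\varepsilon$ makes the tracking of the constants (and the role of $h_0\ll1$) more transparent, while your formulation is closer in spirit to the standard linear argument of Cascon--Kreuzer--Nochetto--Siebert.
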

\begin{proof}
Let $\alpha, \alpha_1 \in (0,1)$ satisfy $\alpha_1\in (0,\alpha) $
and
$$
\theta < \frac{1}{N^2} \frac{C_2 \gamma}{ C_3(C_1 + (1 + 2C_{\ast}^2C_1)\gamma)
}(1-\alpha^2).
$$
We choose $\delta_1\in (0,1)$ to satisfy $(1 + \delta_1) \tilde{\xi}^2 <1$
 and
\begin{eqnarray}\label{thm-complexity-delta-cond-1}
(1+\delta_1)^2 \alpha_1^2 \leq \alpha^2,
\end{eqnarray}
which implies
\begin{eqnarray}\label{thm-complexity-delta-cond-3}
(1+\delta_1) \alpha_1^2 <1.
\end{eqnarray}

Define
$$
\varepsilon = \frac{1}{\sqrt{2}} \alpha_1
\big(\|\Phi-\Phi_k\|_{a,\Omega}^2 + \gamma osc^2_k(\Phi_k,
\Omega)\big)^{1/2}
$$
and let $\mathcal{T}_{\varepsilon}$ be a refinement of
$\mathcal{T}_0$ with minimal degrees of freedom satisfying
\begin{eqnarray}\label{complexity-optimal-neq0}
\|\Phi - \Phi_{\varepsilon}\|_{a,\Omega}^2 + (\gamma + 1)
osc^2_{\varepsilon}(\Phi_{\varepsilon}, \Omega) \leq \varepsilon^2.
\end{eqnarray}
We get from  $\Phi\in\mathcal{A}^s$ that
\begin{eqnarray*}\label{upper-bound-dof-neq1}
\#\mathcal{T}_{\varepsilon} - \# \mathcal{T}_0 \lesssim
{\varepsilon}^{-1/s} |\Phi|_{s}^{1/s}.
\end{eqnarray*}
Let $\mathcal{T}_{\ast}$ be the smallest  common refinement of $\mathcal{T}_k$
and $\mathcal{T}_{\varepsilon}$. Since
$W^{\varepsilon}=K(\Phi_\varepsilon\Lambda_\varepsilon -V
\Phi_\varepsilon-\mathcal{N}(\rho_{\Phi_\varepsilon})\Phi_\varepsilon)$,
we obtain from the triangle inequality, the inverse inequality, and the Young's
inequality that
\begin{eqnarray*}
\widetilde{osc}^2_{\ast}(P_{\ast}W^{\varepsilon},\Omega) \leq
2\widetilde{osc}^2_{\ast}(P_{\varepsilon}W^{\varepsilon},\Omega)
+2C^2_{\ast}\|P_{\varepsilon}W^{\varepsilon}-
P_{\ast}W^{\varepsilon}\|^2_{a,\Omega},
\end{eqnarray*}
where $P_{\varepsilon}$ and $P_{\ast}$ are Galerkin projections on
$\mathcal{T}_{\varepsilon}$ and $\mathcal{T}_{\ast}$ defined by
\eqref{projection}.
Note that
\begin{eqnarray*}\label{ortho-relation}
\|W^{\varepsilon}-P_{\ast}W^{\varepsilon}\|^2_{a, \Omega} =
\|W^{\varepsilon} - P_{\varepsilon}W^{\varepsilon}\|^2_{a, \Omega} -
\|P_{\ast}W^{\varepsilon} - P_{\varepsilon}W^{\varepsilon}\|_{a,
\Omega}^2,
\end{eqnarray*}
we have
\begin{eqnarray*}
\| W^{\varepsilon} - P_{\ast} W^{\varepsilon}\|_{a,\Omega}^2 +
\frac{1}{2 C_{\ast}^2}\widetilde{osc}^2_{\ast}
(P_{\ast}W^{\varepsilon},\Omega) ~\leq~  \|W^{\varepsilon} -
P_{\varepsilon} W^{\varepsilon}\|_{a,\Omega}^2+ \frac{1}{C_{\ast}^2}
osc^2_{\varepsilon}(P_{\varepsilon}W^{\varepsilon},\Omega).
\end{eqnarray*}
Since (\ref{gamma-boundary}) implies $\tilde{\gamma} \leq \frac{1}{2
C_{\ast}^2}$,  we get that
\begin{eqnarray*}
\| W^{\varepsilon} - P_{\ast} W^{\varepsilon}\|_{a,\Omega}^2 +
\tilde{\gamma}\widetilde{osc}^2_{\ast}(P_{\ast}W^{\varepsilon},\Omega)
&\leq & \|W^{\varepsilon} - P_{\varepsilon}
W^{\varepsilon}\|_{a,\Omega}^2 + \frac{1}{ C_{\ast}^2}
osc^2_{\varepsilon}(P_{\varepsilon}W^{\varepsilon},\Omega)\nonumber\\
&\leq & \|W^{\varepsilon} - P_{\varepsilon}
W^{\varepsilon}\|_{a,\Omega}^2+ (\tilde{\gamma} + \sigma)
osc^2_{\varepsilon}(P_{\varepsilon}W^{\varepsilon},\Omega),
\end{eqnarray*}
where $\sigma = \frac{1}{C_{\ast}^2} - \tilde{\gamma} \in (0, 1)$.
We may conclude from using the similar argument as that in proof of Theorem \ref{thm-error-reduction}  that
\begin{eqnarray}\label{complexity-optimal-neq2}
\|\Phi - \Phi_{\ast}\|_{a,\Omega}^2 + \gamma
osc^2_{\ast}(\Phi_{\ast}, \Omega) &\leq & \alpha_0^2\left(
\|\Phi-\Phi_{\varepsilon}\|_{a,\Omega}^2 + (\gamma +
\sigma)osc^2_{\varepsilon}(P_{\varepsilon}
W^{\varepsilon},\Omega)\right) ~~~\nonumber\\
&\leq & \alpha_0^2\left(\|\Phi - \Phi_{\varepsilon}\|_{a,\Omega}^2 +
(\gamma + 1) osc^2_{\varepsilon}
(P_{\varepsilon}W^{\varepsilon},\Omega)\right),
\end{eqnarray}
where
\begin{eqnarray*}
\alpha_0^2 = \frac{ (1+ \delta_1)  +  \hat{C}_3\tilde{\kappa}(h_0) }{1 -
\hat{C}_3  \delta_1^{-1} \tilde{\kappa}^2(h_0)}
\end{eqnarray*}
and $\hat{C}_3$ is the constant appearing in the proof of Theorem
\ref{thm-error-reduction}. We derive from
(\ref{complexity-optimal-neq0}) and (\ref{complexity-optimal-neq2})
that
\begin{eqnarray*} \|\Phi - \Phi_{\ast}\|_{a,\Omega}^2
+ \gamma osc^2_{\ast}(\Phi_{\ast}, \mathcal{T}_{\ast}) \leq
\check{\alpha}^2 \big(\|\Phi-\Phi_k\|^2_{a,\Omega} + \gamma
osc^2_k(\Phi_k, \mathcal{T}_k)\big)
\end{eqnarray*}
with  $\check{\alpha} = \frac{1}{\sqrt{2}} \alpha_0 \alpha_1$. Using
(\ref{thm-complexity-delta-cond-3}), we obtain $\check{\alpha}^2\in
(0,\frac{1}{2})$ when $h_0\ll 1$.
 Set $\check{\theta} =
\frac{\tilde{C}_2(1-2\hat{\alpha}^2)}{\tilde{C}_0( \tilde{C}_1 +
(1+2 C_{\ast}^2 \tilde{C}_1) \hat{\gamma})}$, $\hat{\gamma}=
\frac{\gamma}{1 - \hat{C}_4 \delta_1^{-1}  \tilde{\kappa}^2(h_0)}$,
$\tilde{C}_0 = \max(1, \frac{\tilde{C}_3}{\hat{\gamma}})$, and
$\hat{\alpha}^2= \frac{ (1+ \delta_1)\check{\alpha}^2 + \hat{C}_4
\tilde{\kappa}(h_0) }{1 - \hat{C}_4 \delta_1^{-1} \tilde{\kappa}^2(h_0)}.$
Denote $\mathcal{R}=\mathcal{R}_{\mathcal{T}_k\rightarrow
\mathcal{T}_{\ast}}$ the refined elements from $\mathcal{T}_k$ to
$\mathcal{T}_{\ast}$, we obtain from Lemma \ref{optimal-marking} that $\mathcal{T}_{\ast}$
satisfies
\begin{eqnarray*}
\sum_{\tau\in \mathcal{R}}\eta^2_k(\Phi_k, \tau) \geq
\check{\theta}\sum_{\tau\in \mathcal{T}_k} \eta^2_k(\Phi_k,\tau).
\end{eqnarray*}

 Similar to the illustration in proof of \ref{thm-error-reduction}, from the relationship of (\ref{problem-eigen-compact-dis}) and (\ref{problem-eigen-dis}), we also have that
$\Psi_{k} = \Phi_{k} U_{k}$ with $U_k$ being some unitary matrix. Therefore, from Lemma \ref{eta-etah}, we have that there exists $\check{\theta}' =  \frac{\check{\theta}}{N^2}$, such that
\begin{eqnarray}\label{lemma-4.10-neq1}
\sum_{\tau\in \mathcal{R}}\eta^2_k(\Psi_k, \tau) \geq
\check{\theta}'\sum_{\tau\in \mathcal{T}_k} \eta^2_k(\Psi_k,\tau).
\end{eqnarray}

We obtain from the definition of $\gamma$
(see \eqref{gamma}) and $\tilde{\gamma}$ (see
\eqref{gamma-boundary}) that $\frac{\tilde{C}_3}{\hat{\gamma}} \geq \tilde{C}_3 C_{\ast}^2$. Note that
 $\tilde{C}_3$ and $C_{\ast}$ are constants appeared in upper bound, without loss of generality, we can assume
 $\tilde{C}_3 \geq 1$ and $C_{\ast} \geq 1$. Hence we have ${\tilde
C}_0 = \frac{\tilde{C}_3}{\hat{\gamma}}.$  Since $h_0 \ll 1$, we get
that $\hat{\gamma}>\gamma$ and $\hat{\alpha}\in
(0,\frac{1}{\sqrt{2}}\alpha)$ from
(\ref{thm-complexity-delta-cond-1}). We observe from \eqref{C1},
(\ref{coef-eigen-bound}) and $\hat{\gamma}>\gamma$ that
\begin{eqnarray*}
\check{\theta}'&=& \frac{1}{N^2}\frac{\tilde{C}_2(1-2\hat{\alpha}^2)}
{\frac{\tilde{C}_3}{\hat{\gamma}} (\tilde{C}_1 + (1 + 2
C_{\ast}^2\tilde{C}_1)\hat{\gamma} )} \geq \frac{1}{N^2}
\frac{\tilde{C}_2}{\tilde{C}_3( \frac{\tilde{C}_1}{\hat{\gamma}} + 1
+ 2 C_{\ast}^2\tilde{C}_1)}(1-\alpha^2)\nonumber\\
&=& \frac{1}{N^2} \frac{\frac{C_2}{(1-\tilde{C}\tilde{\kappa}
(h_0))^2}}{\frac{C_3}{(1-\tilde{C}\tilde{\kappa}(h_0))^2}
(\frac{C_1}{\hat{\gamma}((1+\tilde{C}\tilde{\kappa}(h_0))^2)}+1
+2C_{\ast}^2\frac{C_1}{(1+\tilde{C}\tilde{\kappa}(h_0))^2})}(1-\alpha^2)
\nonumber\\
&\geq&  \frac{1}{N^2} \frac{C_2 }{C_3 ( \frac{C_1}{\gamma} + (1 + 2
C_{\ast}^2 C_1) )}(1-\alpha^2) = \frac{1}{N^2} \frac{C_2 \gamma }{C_3 ( C_1 + (1 +
2 C_{\ast}^2 C_1)\gamma )}(1-\alpha^2)> \theta
\end{eqnarray*}
when $h_0 \ll 1$.

Therefore, from (\ref{lemma-4.10-neq1}), we deduce
\begin{eqnarray}\label{lemma-4.10-neq2}
\sum_{\tau\in \mathcal{R}}\eta^2_k(\Psi_k, \tau) \geq
\theta \sum_{\tau\in \mathcal{T}_k} \eta^2_k(\Psi_k,\tau).
\end{eqnarray}
Since $\mathcal{M}_k$ satisfies (\ref{lemma-4.10-neq2}) with minimal cardinality,  we arrive at
\begin{eqnarray*}
\#\mathcal{M}_k &\leq& \#\mathcal{R}_{\mathcal{T}_{\ast} \to \mathcal{T}_k} \leq \#\mathcal{T}_{\ast} -
\#\mathcal{T}_k
\leq \#\mathcal{T}_{\varepsilon}- \#\mathcal{T}_0 \nonumber\\
&\lesssim &(\frac{1}{\sqrt{2}}\alpha_1)^{-1/s} \left(\|\Phi -
\Phi_k\|_{a,\Omega}^2 + \gamma osc^2_k(\Phi_k, \mathcal{T}_k)\right)
^{-1/2s} |\Phi|_{s}^{1/s},
\end{eqnarray*}
which is nothing but (\ref{DOF}) with an explicit
dependence on the discrepancy between $\theta$ and $\frac{C_2
\gamma}{C_3( C_1 + (1 + 2C_{\ast}^2C_1)\gamma )}$ via $\alpha_1$.
This completes the proof.
\end{proof}

\begin{theorem}
\label{theorem-optimal-complexity} {\em (optimal complexity)}
Let  $\theta \in (0, 1)$ and $h_0 \ll 1$.   Assume that Assumption {\bf A2} is satisfied
and \eqref{problem-eigen-compact-L}  has m solutions (up to the invariance of unitary transform),
which are denoted as $[\Phi^{(l)}](l=1,\cdots,m)$ where $m$ can be chosen to be $\infty$.  Let $\{\Psi_k\}_{k\in\mathbb{N}_0}$ be a
sequence of finite element solutions  corresponding to a sequence of nested finite element spaces $\{V_k\}_{k\in \mathbb{N}_0}$
produced by  Algorithm \ref{algorithm-AFEM-con-rate}.
Then  the following quasi-optimal bound is valid
\begin{eqnarray}
\#\mathcal{T}_n-\#\mathcal{T}_0 \lesssim \sum_{l=1}^{m} \left(\|\Phi^{l}-\Phi^{l}_{k_{n_{l}}}
\|_{1,\Omega}^2 + \gamma {\rm osc}^2_{k_{n_{l}}} (\Phi^{l}_{k_{n_{l}}} , \Omega)\right)^{-1/2s},
\end{eqnarray}
where $\Phi^{l} \in [\Phi^{(l)}] \cap \mathcal{A}^s$ satisfies \eqref{assumption-a3}, $\Phi_{k_{n_l}}^l\in
 X_{\Phi^l,k_{n_l}}$ satisfies the a priori error estimates \eqref{H1-norm} and
\eqref{L2-norm} with $h$ being replaced by $h_{k_{n_l}}$, and the hidden constant depends on the exact solution $\Phi^{l}$
and the discrepancy between $\theta$ and $\frac{1}{N^2} \frac{C_2 \gamma}{C_3( C_1 + (1 + 2 C_{\ast}^2 C_1)\gamma) }$.
Here, $n_{l}$ and  $k_{n_{l}}$ are the total number and the maximal index
of iteration which approximate $[\Phi^{(l)}](l=1, \cdots, m)$ among the $n$ iteration, respectively.
\end{theorem}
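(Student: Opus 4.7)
The plan is to combine three ingredients: the standard bisection complexity estimate, the single-class marked-set bound (\ref{DOF}), and the contraction of Theorem \ref{theorem-convergence-rate}. By the Binev-Dahmen-DeVore/Stevenson theory for newest-vertex bisection,
\[
\#\mathcal{T}_n-\#\mathcal{T}_0 \;\lesssim\; \sum_{k=0}^{n-1} \#\mathcal{M}_k.
\]
I would first partition the index set $\{0,1,\dots,n-1\}$ into $m$ groups according to the class $[\Phi^{(l)}]$ approximated at step $k$; writing the $l$-th group as $k_1^{(l)}<k_2^{(l)}<\cdots<k_{n_l}^{(l)}=k_{n_l}$, this becomes
\[
\#\mathcal{T}_n-\#\mathcal{T}_0 \;\lesssim\; \sum_{l=1}^m \sum_{i=1}^{n_l} \#\mathcal{M}_{k_i^{(l)}}.
\]

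For each class $l$ I apply (\ref{DOF}) with $\Phi=\Phi^l$ at every $k_i^{(l)}$ to obtain
\[
\#\mathcal{M}_{k_i^{(l)}} \;\lesssim\; |\Phi^l|_s^{1/s}\Bigl(\|\Phi^l-\Phi^l_{k_i^{(l)}}\|_{1,\Omega}^2+\gamma\,{\rm osc}^2_{k_i^{(l)}}(\Phi^l_{k_i^{(l)}},\Omega)\Bigr)^{-1/(2s)},
\]
and iterate the contraction (\ref{error-reduction-neq1}) backwards inside the same subsequence to get, for every $i\le n_l$,
\[
\|\Phi^l-\Phi^l_{k_{n_l}}\|_{1,\Omega}^2+\gamma\,\eta^2_{k_{n_l}}(\Phi^l_{k_{n_l}},\Omega) \;\le\; \xi^{\,2(n_l-i)}\Bigl(\|\Phi^l-\Phi^l_{k_i^{(l)}}\|_{1,\Omega}^2+\gamma\,\eta^2_{k_i^{(l)}}(\Phi^l_{k_i^{(l)}},\Omega)\Bigr).
\]
Because ${\rm osc}_k\le\eta_k$ pointwise and, by (\ref{upper-bound})-(\ref{lower-bound}), the quantities $\|\Phi^l-\Phi^l_k\|_{1,\Omega}^2+\gamma\,\eta_k^2$ and $\|\Phi^l-\Phi^l_k\|_{1,\Omega}^2+\gamma\,{\rm osc}_k^2$ are comparable with constants depending only on $C_1,C_2,C_3,\gamma$, this upgrades to
\[
\Bigl(\|\Phi^l-\Phi^l_{k_i^{(l)}}\|_{1,\Omega}^2+\gamma\,{\rm osc}^2_{k_i^{(l)}}\Bigr)^{-1/(2s)} \;\lesssim\; \xi^{(n_l-i)/s}\Bigl(\|\Phi^l-\Phi^l_{k_{n_l}}\|_{1,\Omega}^2+\gamma\,{\rm osc}^2_{k_{n_l}}\Bigr)^{-1/(2s)}.
\]
Summing the geometric series $\sum_{i=1}^{n_l}\xi^{(n_l-i)/s}\le(1-\xi^{1/s})^{-1}$ over $i$ (with ratio strictly less than $1$), and then over $l$, produces the stated bound, with the hidden constant absorbing $|\Phi^l|_s^{1/s}$, $(1-\xi^{1/s})^{-1}$, and the constants from the above equivalences.

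The principal obstacle is really bookkeeping of perturbation constants: both the contraction factor $\xi$ and the weight $\gamma$ coming from Theorem \ref{theorem-convergence-rate}, as well as the a posteriori constants $C_1,C_2,C_3$ entering the $\eta$-versus-${\rm osc}$ comparison, are all $\tilde{\kappa}(h_0)$-perturbed via (\ref{C1})-(\ref{coef-eigen-bound}); one must verify, under $h_0\ll 1$ and with $\theta$ in the admissible range identified in the DOF lemma, that a single choice of constants controls every $k_i^{(l)}$ and every $l$ simultaneously. A secondary point is that Assumption \textbf{A3} must be invoked class-by-class at each $\Phi^l$, so the hidden constant also inherits the individual inf-sup constants $\beta$ and the discrepancy between $\theta$ and $\frac{C_2\gamma}{C_3(C_1+(1+2C_\ast^2 C_1)\gamma)}$---dependencies already acknowledged in the theorem statement.
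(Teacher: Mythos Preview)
Your proposal is correct and follows essentially the same route as the paper: partition $\{0,\dots,n-1\}$ by the class $[\Phi^{(l)}]$ being approximated, invoke the Stevenson bound $\#\mathcal{T}_n-\#\mathcal{T}_0\lesssim\sum_k\#\mathcal{M}_k$, apply (\ref{DOF}) within each class, use the contraction (\ref{error-reduction-neq1}) to produce a geometric series in $\xi^{1/s}$, and switch between the ${\rm osc}$- and $\eta$-weighted quantities via (\ref{lower-bound}) and the trivial inequality ${\rm osc}_k\le\eta_k$. The only cosmetic difference is that the paper does the ${\rm osc}\to\eta$ conversion before inserting the contraction and the $\eta\to{\rm osc}$ conversion afterwards, whereas you package both into a single equivalence; the resulting constants are the same.
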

\begin{proof}
Assume that among the iterate solution spaces $\{[\Psi_i]\}_{i=1}^{n}$,
there are $n_{l}$ approximations for $[\Phi^{(l)}](l=1, \cdots, m),$
which are denoted by $[\Psi_{k_i}]$ ($i=1, \cdots, n_{l}$). Here, $\sum_{l=1}^{m} n_{l} = n $, and $n_{l}$ can be $0$.
Recall that (see Theorem 6.1 in \cite{stevenson08})
\begin{eqnarray*}
& &\#\mathcal{T}_n - \#\mathcal{T}_0 \lesssim \sum_{l=1}^{m}\sum_{i=1}^{n_l}
\#\mathcal{M}_{k_i},
\end{eqnarray*}
we obtain from (\ref{DOF}) that
\begin{eqnarray*}
\#\mathcal{T}_n - \#\mathcal{T}_0 \lesssim \sum_{l=1}^{m}\sum_{i=1}^{n_l}
\left( \|\Phi^{l} - \Phi^{l}_{k_i} \|_{1,\Omega}^2+\gamma {\rm osc}^2_{k_i}(\Phi^{l}_{k_i},
\Omega)\right)^{-1/2s}(|\Phi^{l}|_{s}^{1/s}).
\end{eqnarray*}
Note that (\ref{lower-bound}) implies
\begin{eqnarray*}
\|\Phi^{l} - \Phi^{l}_{k_i}\|^2_{1,\Omega} + \gamma \eta^2_{k_i}(\Phi^{l}_{k_i} , \Omega)
\leq \check{C}  \big( \|\Phi^{l} - \Phi^{l}_{k_i}\|^2_{1,\Omega} +
\gamma {\rm osc}^2_{k_i}(\Phi^{l}_{k_i}, \Omega)\big),
\end{eqnarray*}
where $ \check{C} = \max(1 + \frac{\gamma}{C_2}, \frac{C_3}{C_2}),$
we conclude
\begin{eqnarray*}
\#\mathcal{T}_n - \#\mathcal{T}_0 &\lesssim& \sum_{l=1}^{m}\sum_{i=1}^{n_l}
\left( \big(\|\Phi^{l} - \Phi^{l}_{k_i}\|_{1,\Omega}^2 + \gamma\eta^2_{k_i}(\Phi^{l}_{k_i},
\Omega)\big)\right)^{-1/2s}(|\Phi^{l}|_{s}^{1/s}).
\end{eqnarray*}

Since  (\ref{error-reduction-neq1}) yields
\begin{eqnarray*}
\|\Phi^{l}-\Phi^{l}_{k_{n_{l}}}\|_{1,\Omega}^2 + \gamma \eta^2_{k_{n_{l}}}(\Phi^{l}_{k_{n_{l}}}, \Omega)
 \leq \xi^{2 (n_{l} - i)}  \left(\|\Phi^{l}- \Phi^{l}_{k_{i}} \|_{1,\Omega}^2
 + \gamma \eta^2_{k_{i}}(\Phi^{l}_{k_{i}}, \Omega)\right),
\end{eqnarray*}
we arrive at
\begin{eqnarray*}
\#\mathcal{T}_n-\#\mathcal{T}_0 &\lesssim& \sum_{l=1}^{m}\Big(|\Phi^{l}|_{s}^{1/s}
\big(\|\Phi^{l}-\Phi^{l}_{k_{n_{l}}}\|_{1,\Omega}^2 + \gamma\eta^2_{k_{n_{l}}} (\Phi^{l}_{k_{n_{l}}},
\Omega)\big)^{-1/2s} \sum_{i=1}^{n_l}\xi^{\frac{n_l-i}{s}}\Big) \nonumber\\
&\lesssim& \sum_{l=1}^{m}\Big(|\Phi^{l}|_{s}^{1/s} \big(\|\Phi^{l}-\Phi^{l}_{k_{n_{l}}}\|_{1,\Omega}^2
+ \gamma \eta^2_{k_{n_{l}}} (\Phi^{l}_{k_{n_{l}}}, \Omega)\big)^{-1/2s}\Big),
\end{eqnarray*}
where the fact $\xi<1$ is used.

Thus we obtain from ${\rm osc}_k(\Phi^{l}_k, \Omega) \leq  \eta_k(\Phi^{l}_k,\Omega)$ that
\begin{eqnarray*}
\#\mathcal{T}_n-\#\mathcal{T}_0 \lesssim \sum_{l=1}^{m} \left(\|\Phi^{l}-\Phi^{l}_{k_{n_{l}}} \|_{1,\Omega}^2
+ \gamma {\rm osc}^2_{k_{n_{l}}}(\Phi^{l}_{k_{n_{l}}}, \Omega)\right)^{-1/2s}.
\end{eqnarray*}
This completes the proof.
\end{proof}

\section{Numerical examples} \label{sec-numerical}
\setcounter{equation}{0}

In this section, we shall present some numerical simulations for three typical molecular systems:
$C_9H_8O_4$(Aspirin), $C_5H_9O_2 N$($\alpha$ amino acid), and $C_{60}$(fullerene), which support our theory.
Due to the length limitation for the paper,  we only show the results for pseudopotential approximations
for illustration.

Our numerical experiments are carried out on LSSC-III in
the State Key Laboratory of Scientific and Engineering Computing, Chinese Academy of Sciences, and our package RealSPACES
(Real Space Parallel Adaptive Calculation of Electronic Structure)  that are based on the toolbox PHG \cite{phg} of the State
Key Laboratory of Scientific and Engineering Computing, Chinese Academy of Sciences.

In our computations, we use the norm-conserving pseudopotential obtained by fhi98PP software and
the LDA exchange-correlation potential. We use Algorithm 4.1 and  apply the standard quadratic finite element discretizations.
Since the analytic solutions are not known even for the simplest systems,
we only show the convergence curve of the a posteriori error estimator $\eta_k^2(\Psi_k,\Omega)$ in our figures.
%
The mesh and density illustrations are drawn using ParaView.\vskip 0.2cm

{\bf Example 1}: Aspirin $C_9H_8O_4$.

The ground state energy obtained by SIESTA is $-119.621~a.u.$.
In our computations, we choose the computational domain to be $\Omega =[-20.0,20.0]^3$.

The atomic configuration, the calculated ground state charge density and the associated computational mesh
are shown in Figure \ref{fig:C9H8O4}.  First, comparing the configuration figure (the left one of Figure \ref{fig:C9H8O4})
and the charge density figure (the middle one of Figure \ref{fig:C9H8O4}), we can see qualitatively that our calculations are correct,
the carbon-hydrogen bonds, carbon-oxygen bonds, and the oxygen-hydrogen bonds are preserved very well.
If we take a detailed look at the charge density figure, we can further see that the charge is more concentrative
around the oxygen than around the carbon. We also see from the mesh figure (the right one of Figure \ref{fig:C9H8O4})
and the charge density figure that our error estimator can catch the oscillations of the charge density very well,
which qualitatively confirms  that our error estimator is efficient.

\begin{figure}[!htbp]
  \begin{center}
    \includegraphics[width=4cm]{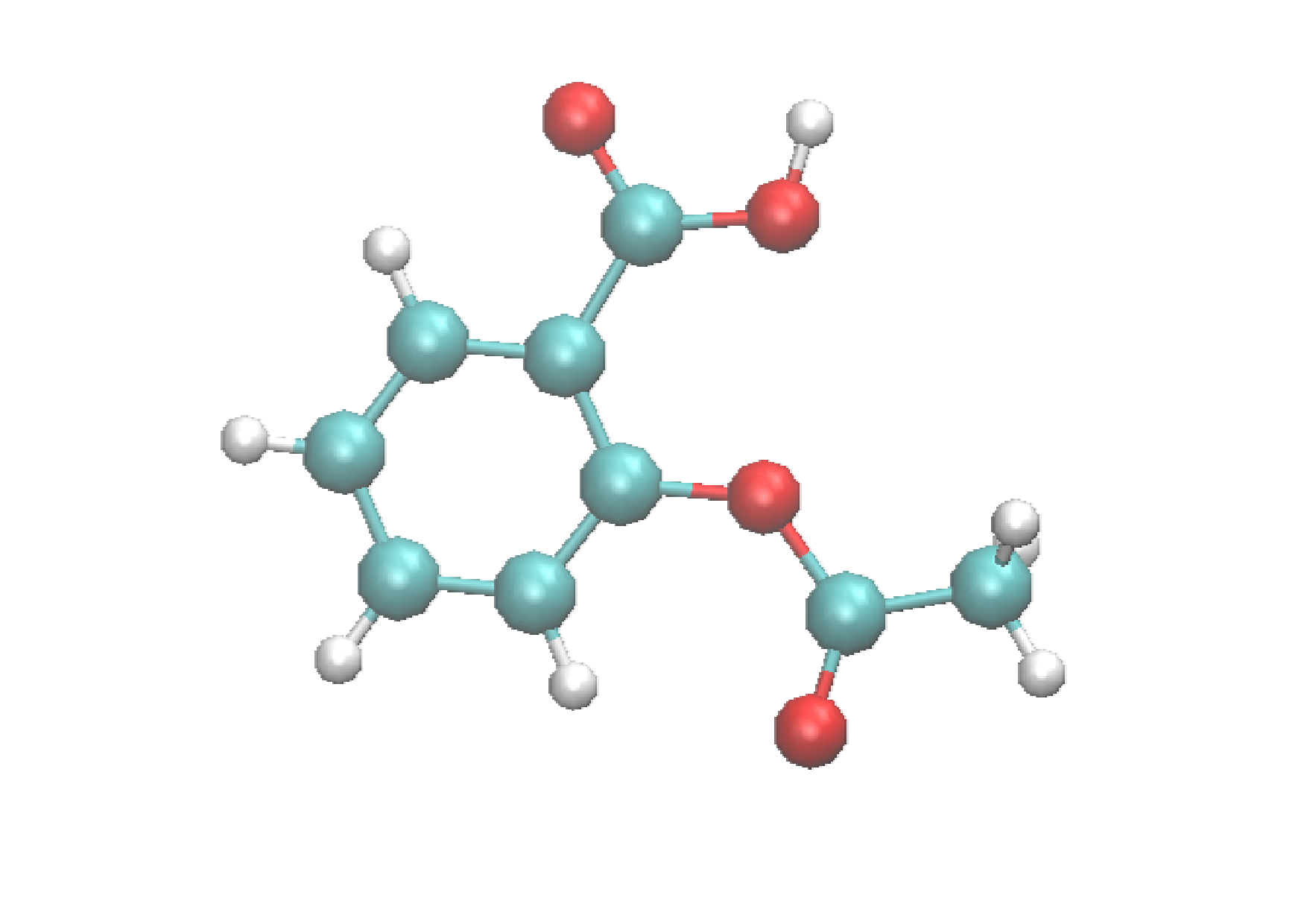}
    \hspace{0.2cm}
    \includegraphics[width=3.6cm]{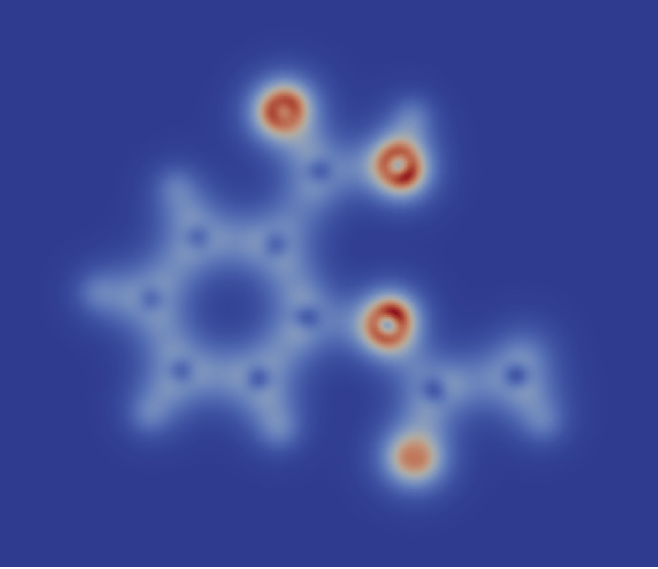}
    \hspace{0.2cm}
    \includegraphics[width=3.6cm]{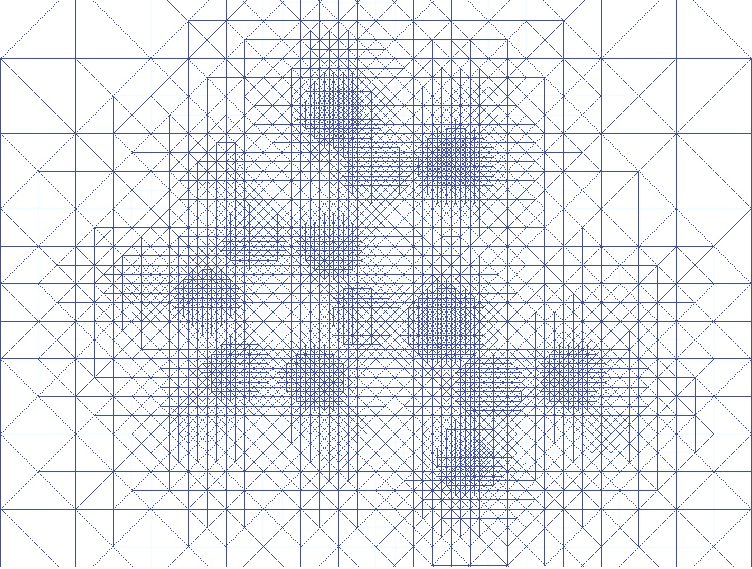}
    \caption{$C_9H_8O_4$: configuration, charge density and mesh on plane $z=0$.}
    \label{fig:C9H8O4}
  \end{center}
\end{figure}

We now turn to analyze some quantitative behavior of our calculations. The convergence curve of
the ground state energy is shown in the left of Figure \ref{figure-error-C9H8O4}.
We observe that the ground state energy approximations converge to $-119.918 ~a.u.$,
which is very close to the value given by SIESTA. This result validates our calculations quantitatively.
We see from the right of Figure \ref{figure-error-C9H8O4} that the convergence curve of the a posteriori error estimator
is parallel to the line with slope $-\frac{2}{3}$, which means that it reaches the optimal convergence rate.
From the analysis result for the a posteriori error estimator(Theorem 4.3)
the optimal convergence of the a posteriori error estimator also indicates that the approximation of 
the eigenfunction space have reached the optimal convergence rate, which coincides with our theory in Section 4.

\begin{figure}[htb]
  \centering
  \subfloat[ground state energy]{
    \includegraphics[width=4.3cm,angle=-90]{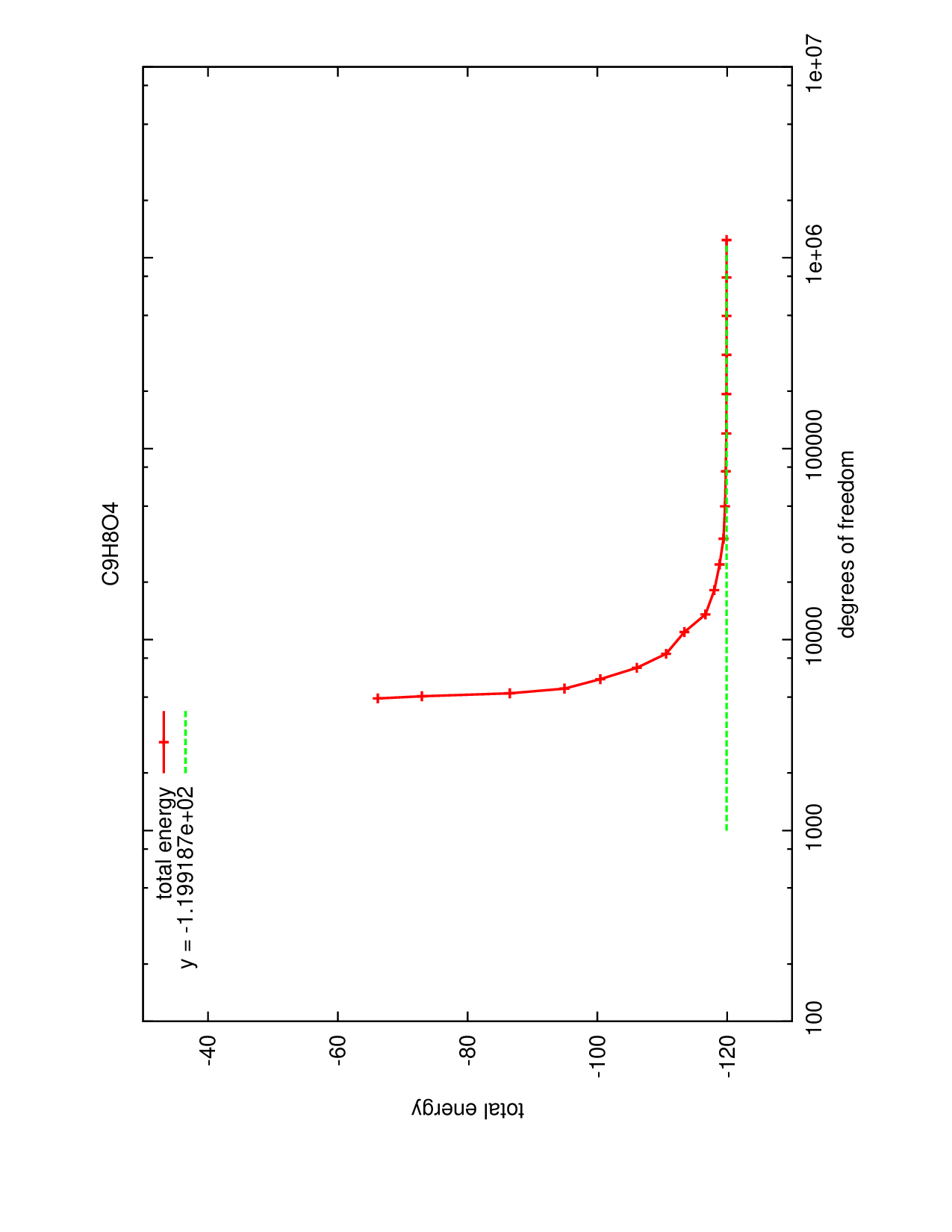}
  }
  \,
  \subfloat[$\eta_h(\Psi_h, \Omega)$]{
    \includegraphics[width=4.3cm,angle=-90]{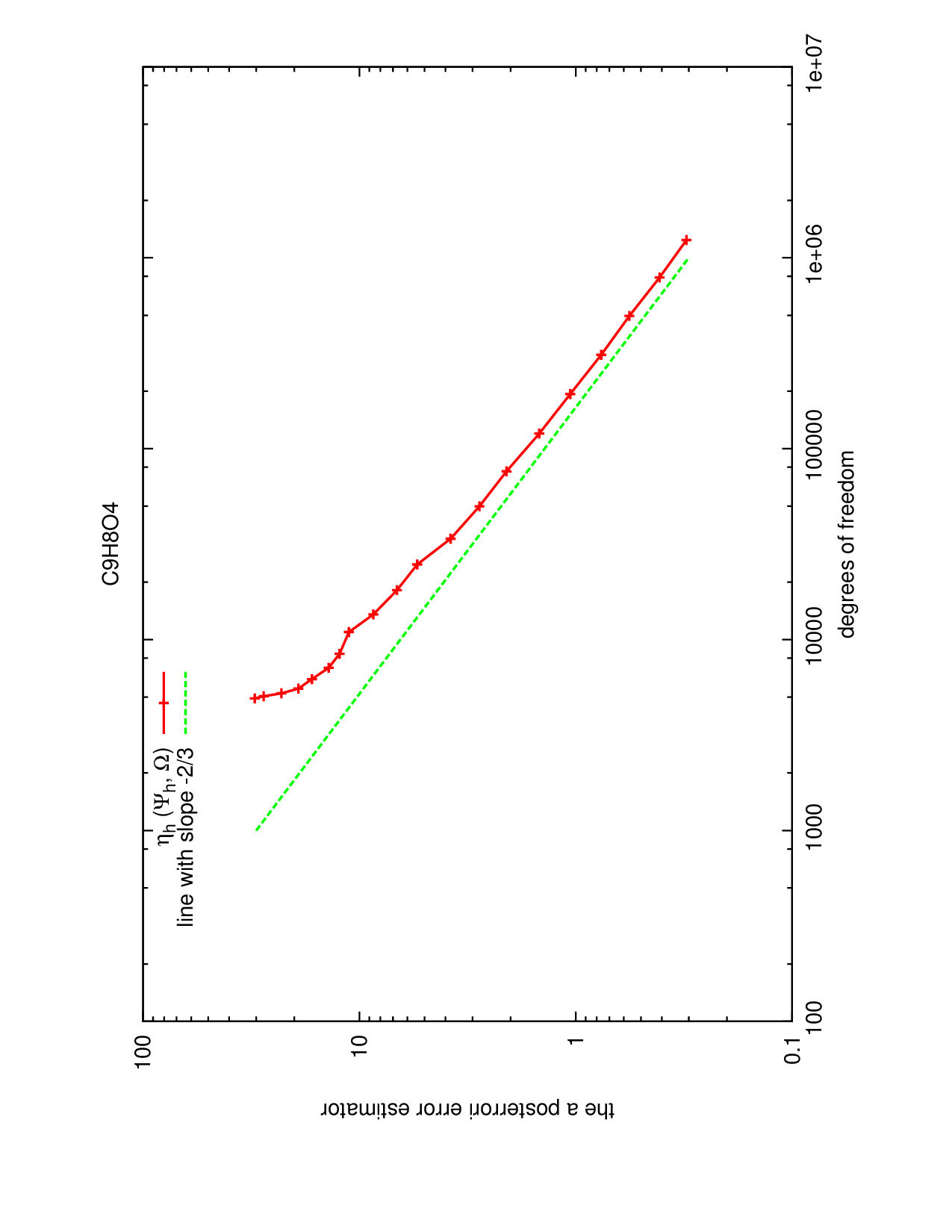}
  }
\caption{The convergence curves of the ground state energy and $\eta_h(\Psi_h,\Omega)$.}
  \label{figure-error-C9H8O4}
\end{figure}\vskip 0.2cm

{\bf Example 2}: $\alpha$ amino acid $C_5H_9O_2 N$.

The ground state energy obtained by SIESTA is $-75.494 ~a.u.$.
In our computations, we choose the computational domain to be $\Omega =[-10.0,10.0]^3$.

The atomic configuration, the calculated ground state charge density and the associated computational mesh are shown in
Figure \ref{fig:C5H9O2N}. We have to point out that for $C_5H_9O_2 N$, not more than $2$ atoms stay in the same plane. Therefore, it is very difficult to find a plane  where  the configuration and the charge density coincide very well
with each other as  Example 1. Similar to Example 1, we also choose the plane $z=0$ as our viewpoint. Anyway, we can see from
the figure for charge density and the figure for the adaptive mesh that our  error indicator is very efficient.
These results can validate our computations.

\begin{figure}[!htbp]
  \begin{center}
    \includegraphics[width=3.6cm]{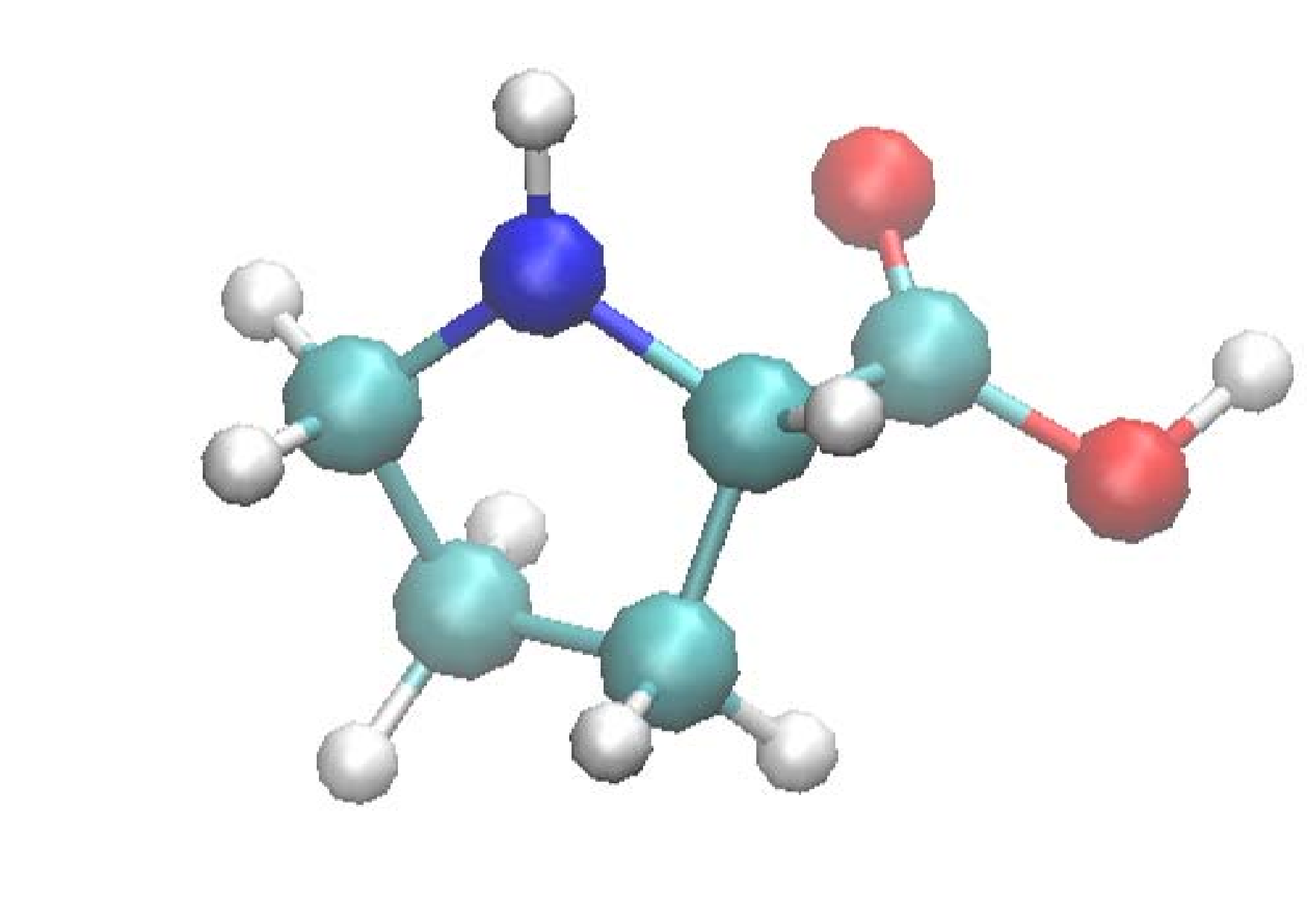}
    \hspace{0.2cm}
    \includegraphics[width=3.6cm]{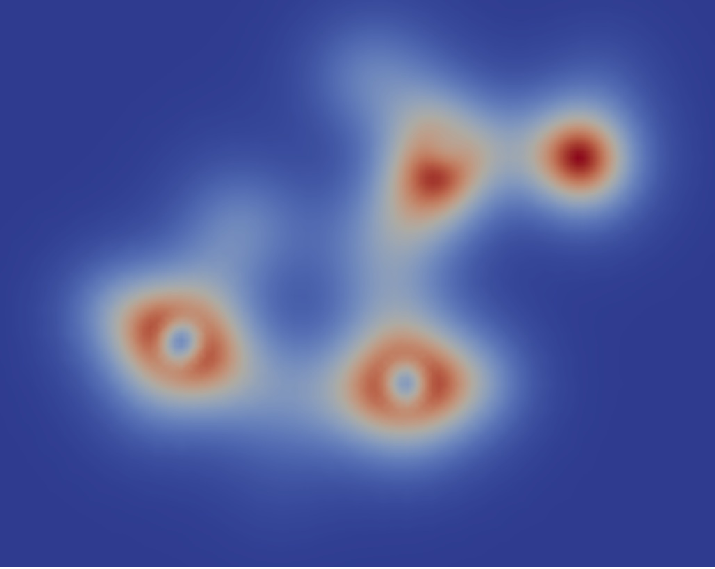}
    \hspace{0.2cm}
    \includegraphics[width=3.6cm]{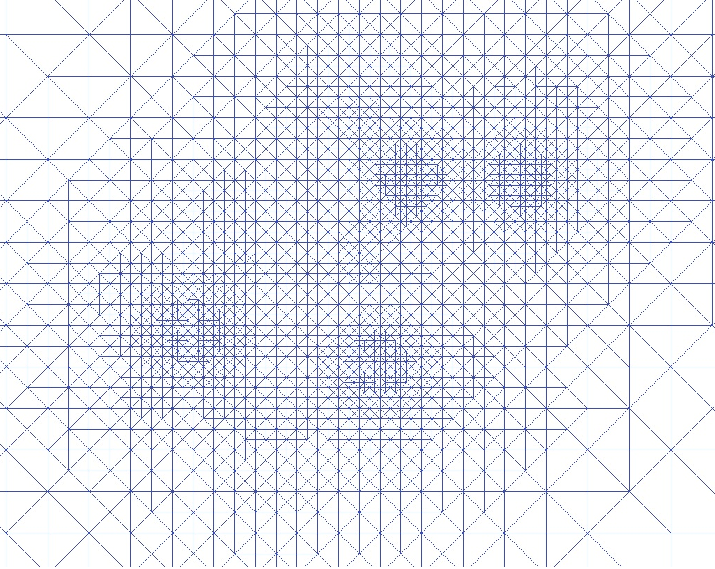}
    \caption{$C_5H_9O_2 N$: configuration, charge density and mesh on plane $z=0$.}
    \label{fig:C5H9O2N}
  \end{center}
\end{figure}

The convergence curves of the ground state energy and the a posteriori error estimator  $\eta_k(\Psi_k,\Omega)$
  obtained by the quadratic finite elements are shown in Figure \ref{figure-error-C5H9O2N},
from which we observe that the ground state energy approximations converge to $-75.494 ~a.u.$,
and the a posteriori error estimator decays with a rate $-\frac{2}{3}$.
This implies the similar conclusions as those for Example 1.

\begin{figure}[htb]
  \centering
  \subfloat[ ground state energy]{
    \includegraphics[width=4.3cm,angle=-90]{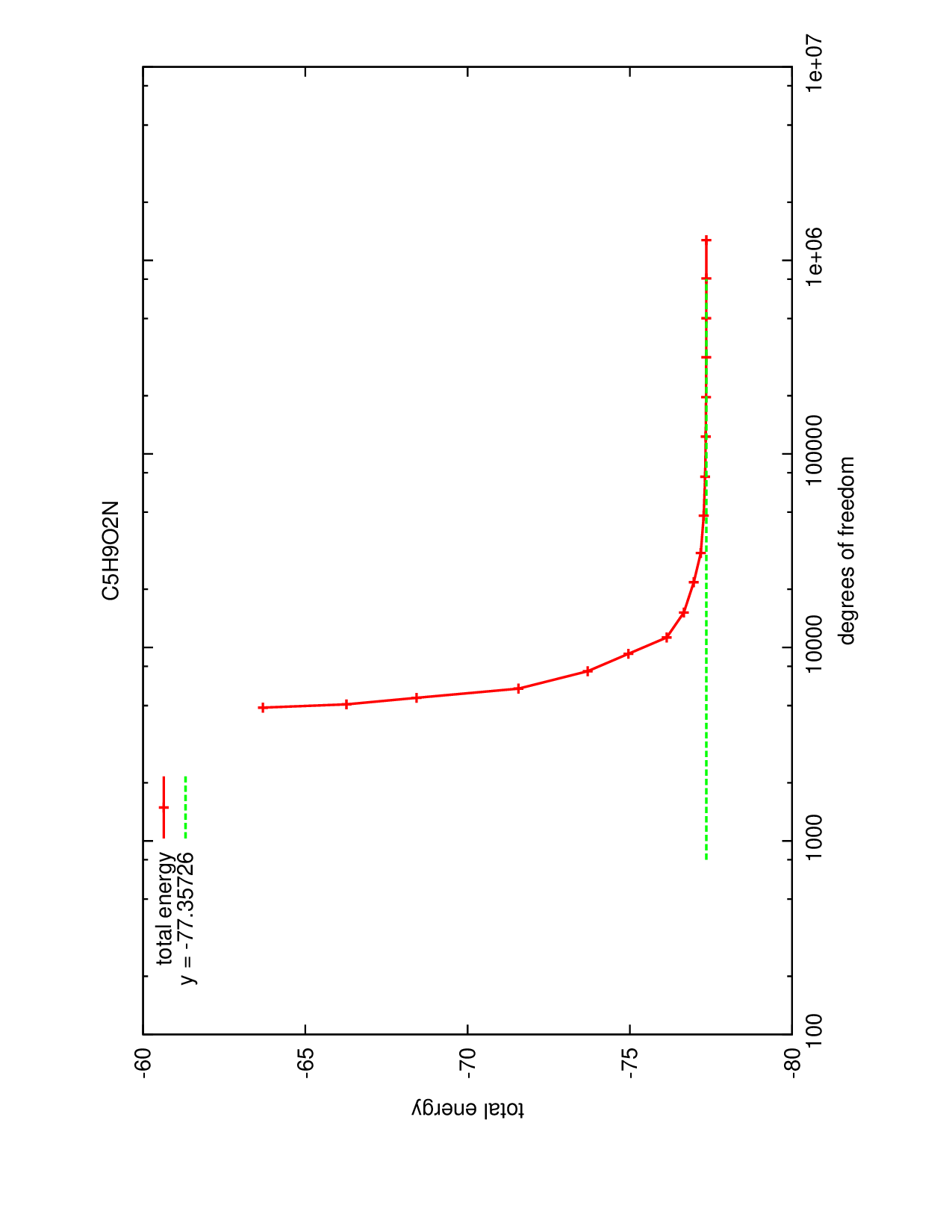}
  }
  \,
  \subfloat[ $\eta_h(\Psi_h, \Omega)$]{
    \includegraphics[width=4.3cm,angle=-90]{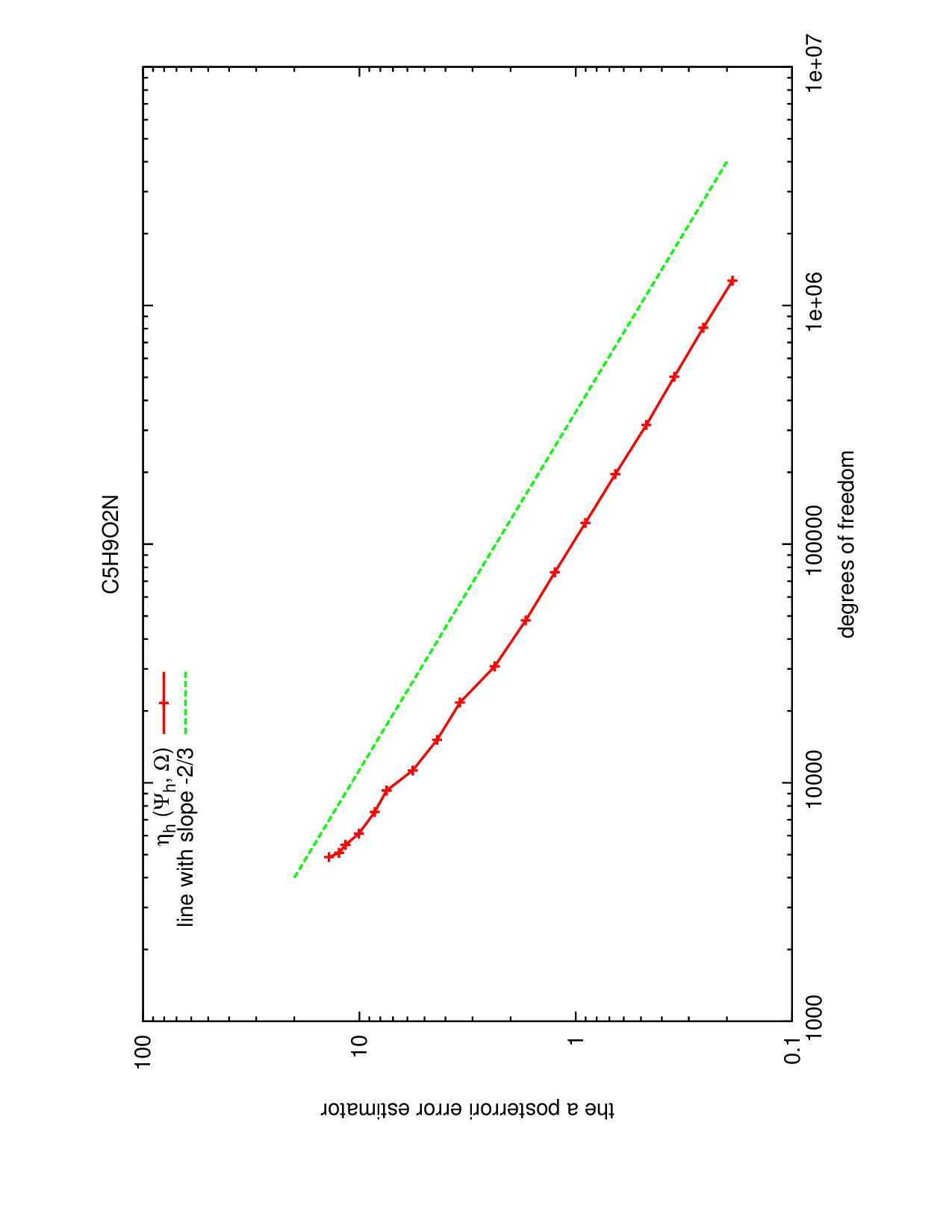}
  }
\caption{The convergence curves of the ground state energy and $\eta_h(\Psi_h,\Omega)$.}
  \label{figure-error-C5H9O2N}
\end{figure}
\vskip 0.2cm

{\bf Example 3}: Fullerene  $C_{60}$.

The ground state energy obtained by SIESTA is $-341.340 ~a.u.$. In
our computations, we choose  $\Omega =[-30.0, 16.0] \times [-23.0,
22.0] \times [-24.0, 21.0]$ to be the computational domain.

We can see the preservation of carbon-hydrogen bonds in Figure
\ref{fig:C60-illustration}, which validates our calculations.
Figure \ref{fig:C60-illustration}, Figure \ref{fig:C60-3d} and Figure \ref{fig:C60-2d} show that more mesh points are placed around the atoms.

\begin{figure}[htbp]
  \centering
  \includegraphics[width=4.5cm]{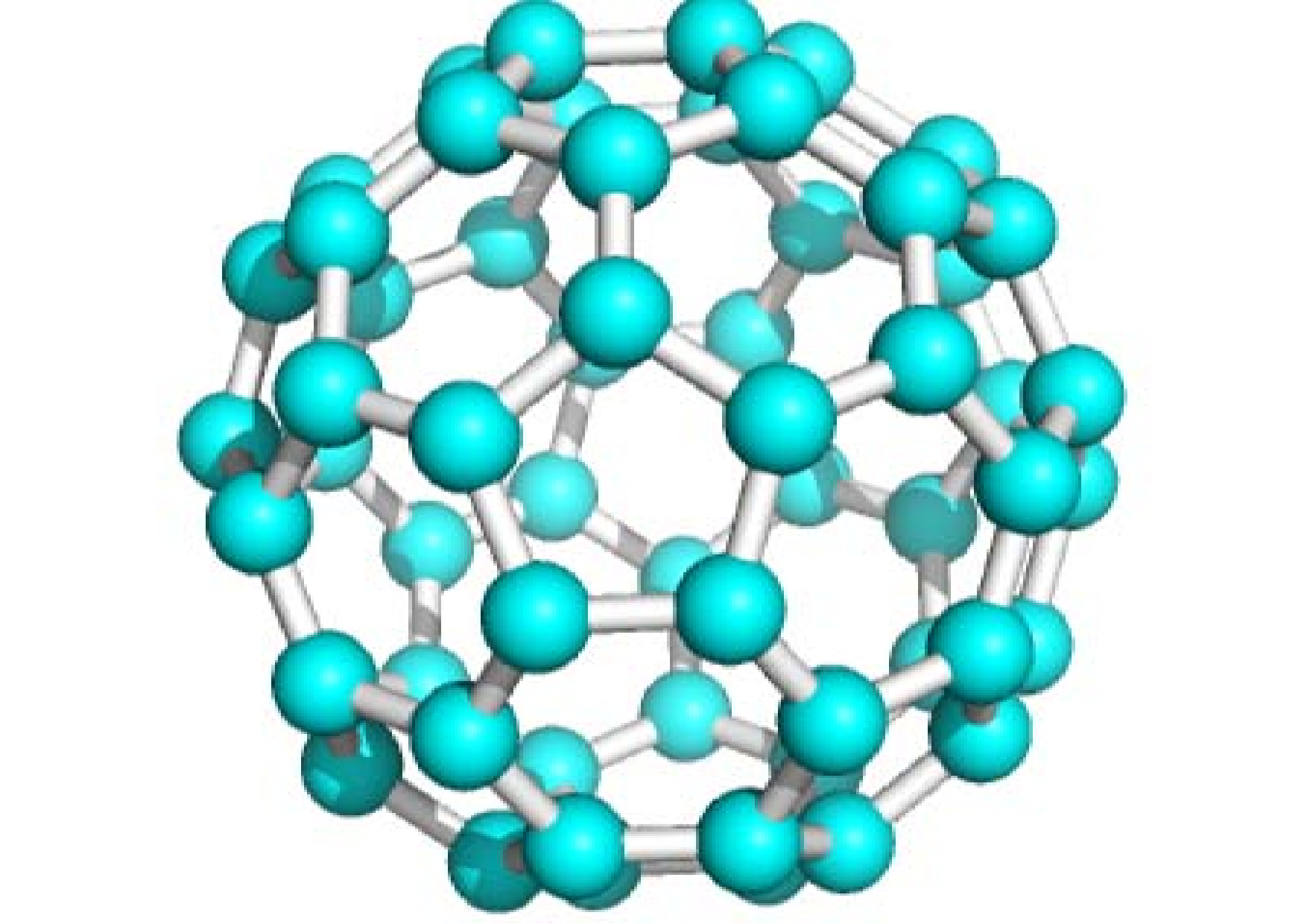}
  \hspace{0.2cm}
  \includegraphics[width=4.0cm]{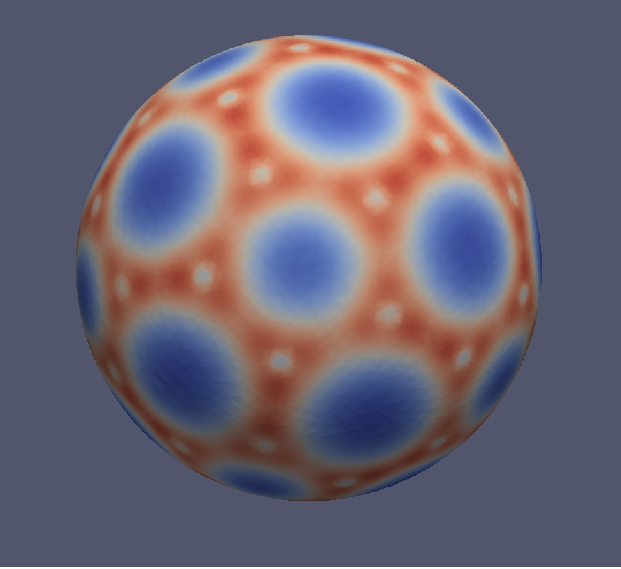}
 \caption{$C_{60}$: configuration and charge density on a sphere.}
  \label{fig:C60-illustration}
\end{figure}

\begin{figure}[htbp]
  \centering
  \includegraphics[width=4.0cm]{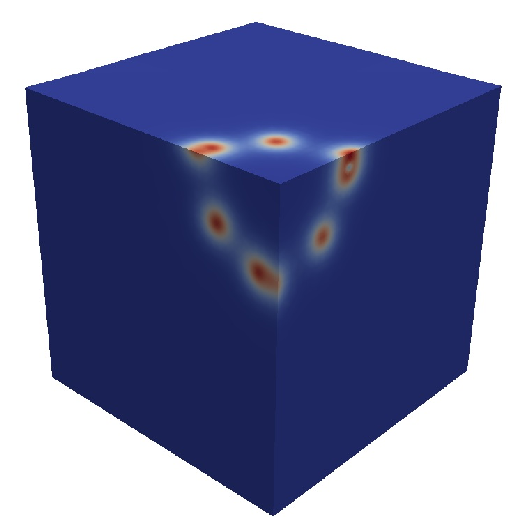}
  \hspace{0.2cm}
  \includegraphics[width=4.0cm]{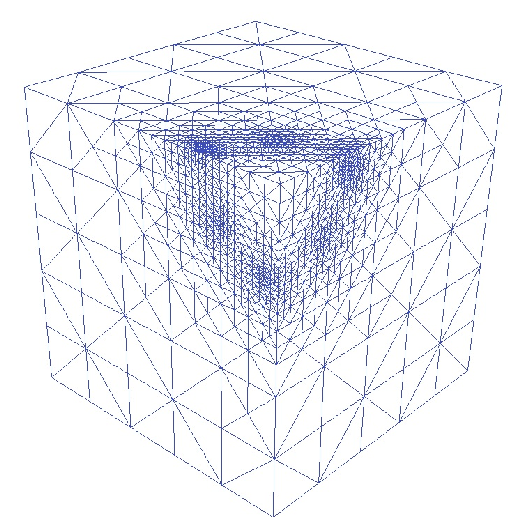}
  \caption{$C_{60}$: charge density and mesh on an interior cross-section.}
  \label{fig:C60-3d}
\end{figure}

\begin{figure}[htbp]
  \centering
  \includegraphics[width=4.0cm]{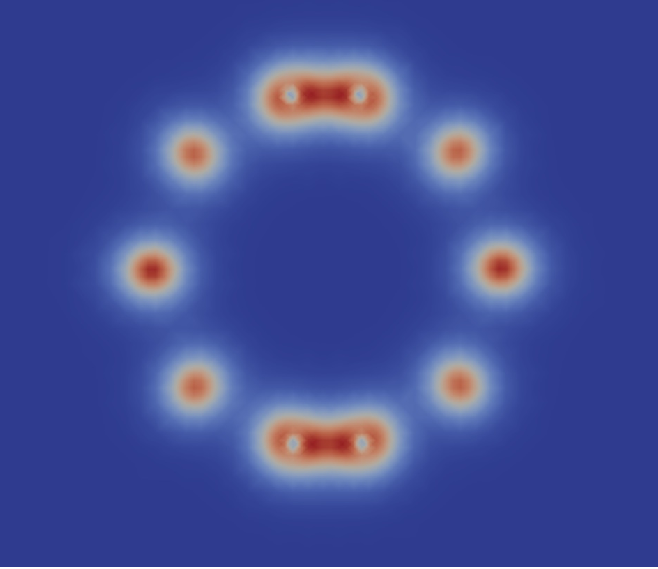}
  \hspace{0.2cm}
  \includegraphics[width=4.0cm]{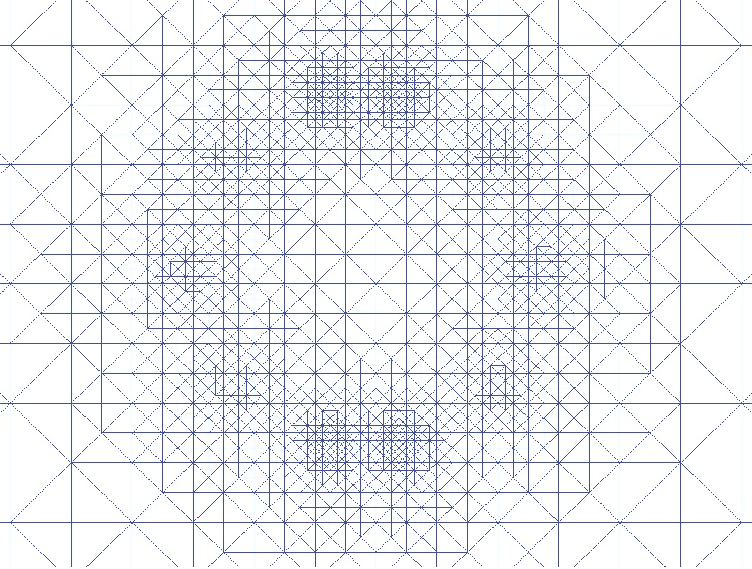}
  \caption{$C_{60}$: charge density and mesh on plane $z=0$.}
  \label{fig:C60-2d}
\end{figure}

The convergence curve of the ground state energy approximations is shown in the right of Figure \ref{figure-error-C60},
from which we observe a convergence to $-342.722~a.u.$, which is very close to the reference energy.
The convergence curve of the a posteriori error estimator obtained by the quadratic  finite element is shown in the left of
Figure \ref{figure-error-C60}, from which we see that it reaches the optimal convergence rate.

\begin{figure}[htb]
  \centering
  \subfloat[ ground state energy]{
    \includegraphics[width=4.3cm,angle=-90]{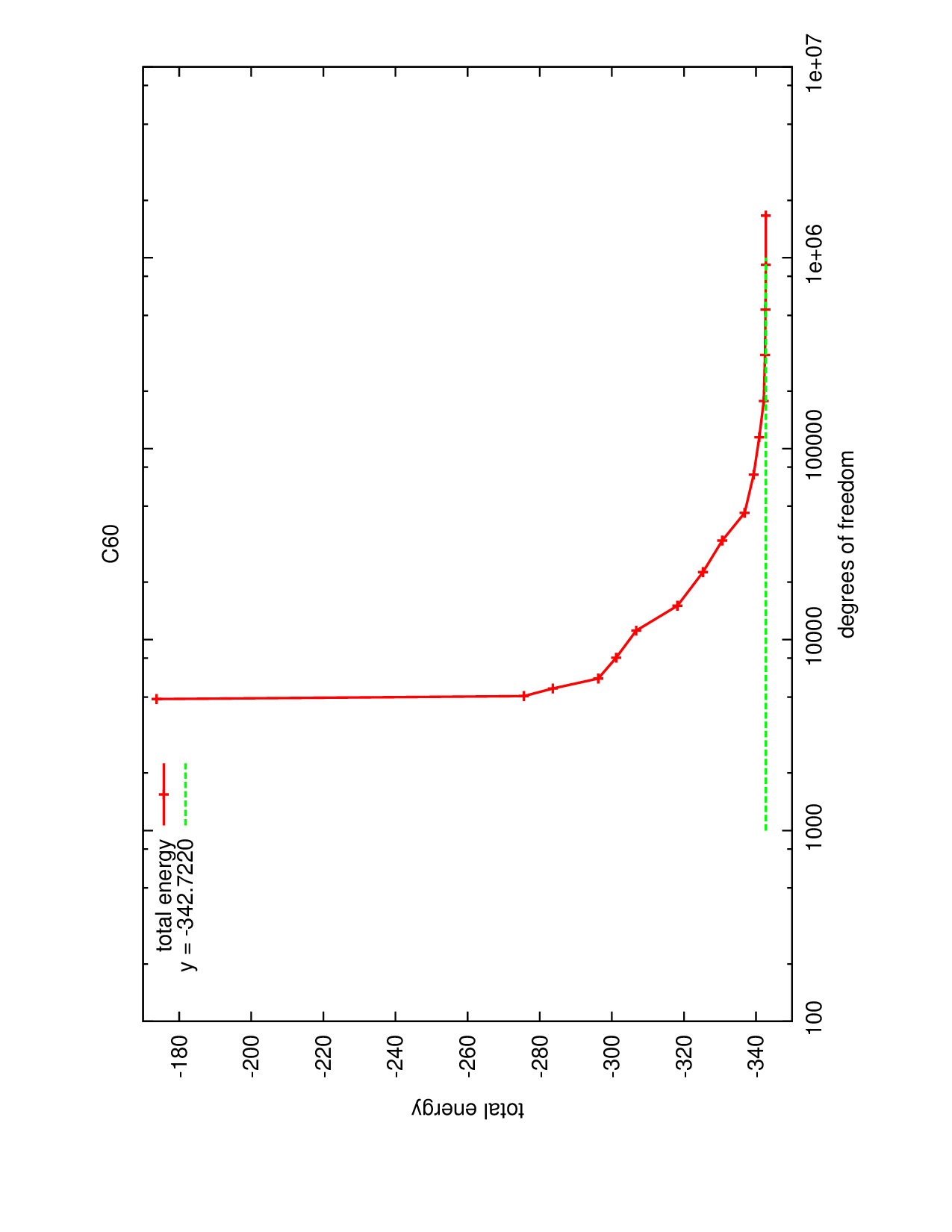}
  }
  \,
  \subfloat[ $\eta_h(\Psi_h, \Omega)$]{
    \includegraphics[width=4.3cm,angle=-90]{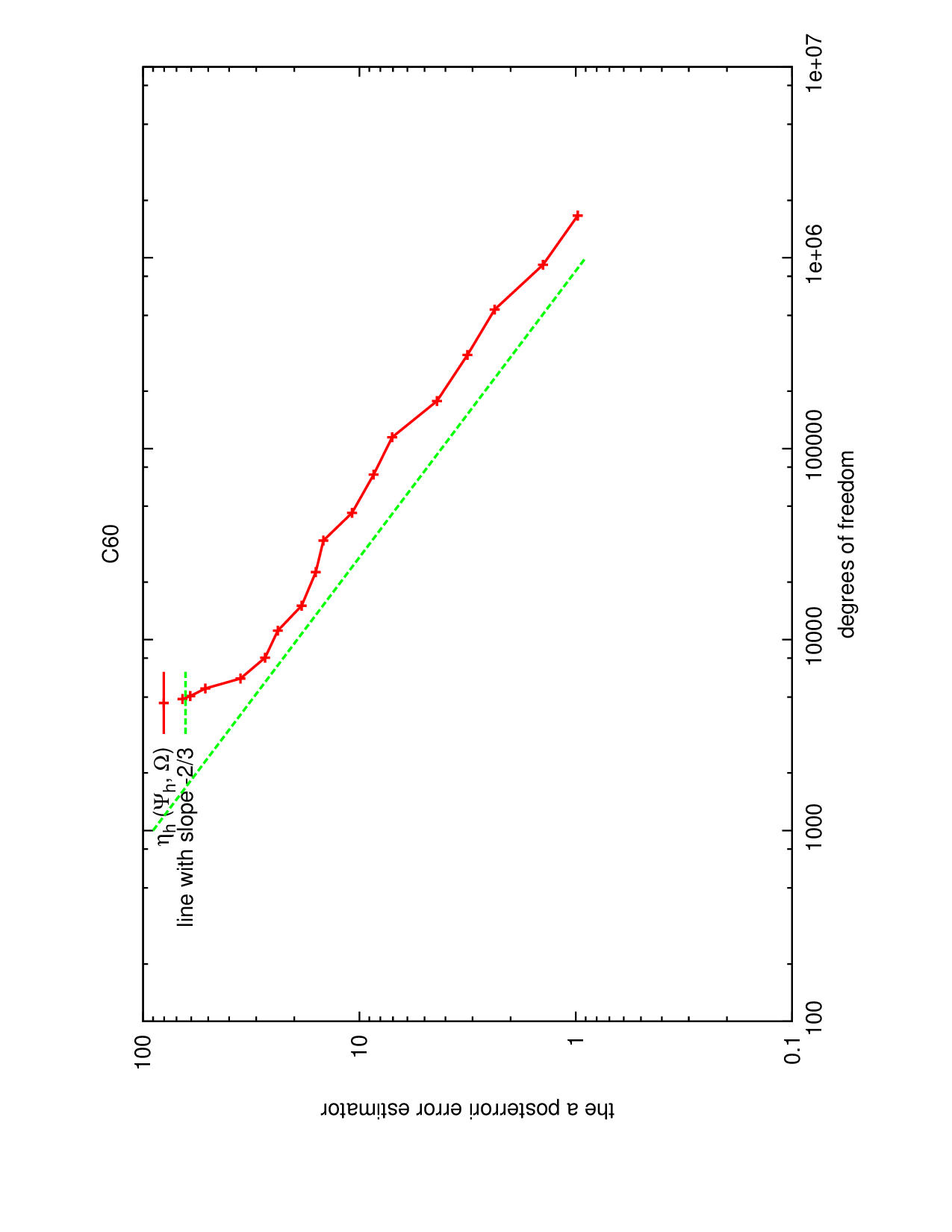}
  }
\caption{The convergence curves of the ground state energy and  $\eta_h(\Psi_h,\Omega)$.}
  \label{figure-error-C60}
\end{figure}

\section{Concluding remarks}\label{sec-conclude}
\setcounter{equation}{0}

In this paper, we have studied the AFE approximations of Kohn-Sham models.
We have obtained the convergence and quasi-optimal complexity of the AFE approximations.
 We have also curried out some typical numerical simulations that  not only support our theory,
but also show the robustness and efficiency of the adaptive finite element method in electronic structure calculations.

In our analysis of convergence rate and complexity of AFE approximations, for convenience, we have assumed that the numerical integration was exact
and the nonlinear algebraic eigenvalue problem was exactly
solved. Indeed, the same conclusion can be expected when the error resulting from the inexact solving of the
nonlinear algebraic
eigenvalue problem and the error coming from the inexact numerical integration are taken into account.

Suppose that $(\Lambda, \Phi)\in\Theta$, the associated exact solution over mesh $\mathcal{T}_h$
is $( \Lambda_h,  \Phi_{h})$, and the inexact numerical solution is
$(\hat{\Lambda}_{h}, \hat{\Phi}_{h})$. If the numerical
errors resulting from the solution of (nonlinear) algebraic system and the numerical integration are small enough, say, satisfy
\begin{eqnarray*}
\| \Phi_{h} - \hat{\Phi}_{h}\|_{1,\Omega}^2 + |  \Lambda_{h}
- \hat{\Lambda}_{h}| \lesssim {\tilde r}(h_0)   \eta_{h}^2(\hat{\Phi}_{h}, \Omega)
\end{eqnarray*}
with ${\tilde r}(h_0) \ll 1$ for $h_0 \ll 1$, then we have from the following triangle inequality
\begin{eqnarray*}
\|\Phi - \hat{\Phi}_{h}\|_{1,\Omega} &\leq& \|\Phi -  \Phi_{h}\|_{1,\Omega} + \| \Phi_{h} - \hat{\Phi}_{h}\|_{1,\Omega}, \nonumber\\
|\Lambda - \hat{\Lambda}_{h}| &\leq& |\Lambda -  \Lambda_{h}| + | \Lambda_{h} - \hat{\Lambda}_{h}|,
\end{eqnarray*}
and the similar perturbation arguments that the same convergence rate and quasi-optimal complexity can be derived.


Finally, we point out that, in this paper, we have not given the convergence rate and complexity for the AFE approximations for the Lagrange multipliers $\Lambda$. Indeed, the related optimal results for Lagrange multipliers are not so obvious, and we need do some more detailed  analysis, which  increase the length of this paper. We will report elsewhere.

\section*{Appendix: A boundary value problem} \label{sec-model-problem}
\setcounter{equation}{0}
\renewcommand{\theequation}{A.\arabic{equation}}
\renewcommand{\thetheorem}{A.\arabic{theorem}}
\renewcommand{\thelemma}{A.\arabic{lemma}}
\renewcommand{\theproposition}{A.\arabic{proposition}}
\renewcommand{\thealgorithm}{A.\arabic{algorithm}}
\renewcommand{\thesection}{A}

In this appendix, we shall provide some basic results for the AFE approximations
of a model problem that was used in our previous analysis.
Consider a homogeneous boundary value problem:
\begin{equation}\label{model-problem}
\left\{\begin{array}{rcll}
\mathcal{L}\Phi&=&F &\mbox{in}~~\Omega,\\ \Phi&=& 0 &\mbox{on}~~\partial\Omega,
\end{array}\right.
\end{equation}
where $F=(f_i)_{i=1}^N\in (L^2(\Omega))^N$.
Note that (\ref{model-problem}) is equal to: Find $\Phi\in \mathcal{H}$ such that
\begin{eqnarray} \label{model-weak}
a(\Phi, \Gamma)= (F,\Gamma) \qquad\forall~\Gamma\in \mathcal{H}.
\end{eqnarray}
A standard finite element scheme for (\ref{model-weak}) is: Find
$\Phi_h \in V_h$ satisfying
\begin{eqnarray}\label{model-weak-fem}
a(\Phi_h, \Gamma) =(F, \Gamma) \qquad \forall~\Gamma \in V_h.
\end{eqnarray}

Let $\mathbb{T}$ denote the class of all conforming refinements by bisections of $\mathcal{T}_0$.
For $\mathcal{T}_h \in \mathbb{T}$ and any
$\Gamma=(\gamma_i)_{i=1}^N\in V_h$, we define the element residual
$\tilde{\mathcal{R}}_{\tau}(\Gamma)$ and the jump $\tilde{J}_e(\Gamma)$ by
 \begin{eqnarray}\label{residual-modelproblem}
  \tilde{\mathcal{R}}_{\tau}(\Gamma) &=&
  \left(f_i +\frac{1}{2} \Delta \gamma_i\right)_{i=1}^N\qquad \mbox{in}~ \tau\in
  \mathcal{T}_h,
\end{eqnarray}
\vskip -0.6cm
\begin{eqnarray*}
\tilde{J}_e(\Gamma) &=&\left(\frac{1}{2}\nabla \gamma_i|_{\tau_1}\cdot\overrightarrow{n_1}+
\frac{1}{2}\nabla\gamma_i|_{\tau_2}\cdot\overrightarrow{n_2}\right)_{i=1}^N
\quad\mbox{on}~ e\in  \mathcal{E}_h,
\end{eqnarray*}
where $e$ is the  common face of elements $\tau_1$ and $\tau_2$ with
unit outward normals $\overrightarrow{n_1}$ and
$\overrightarrow{n_2}$, respectively.
For $\tau\in \mathcal{T}_h$, we define the local error indicator
$\tilde{\eta}_h(\Gamma, \tau)$ by
\begin{eqnarray}\label{error-indicator}
\tilde{\eta}^2_h(\Gamma, \tau) = h_\tau^2\|\tilde{\mathcal{R}}_{\tau}(\Gamma)\|_{0,\tau}^2
+ \sum_{e\in \mathcal{E}_h,e\subset\partial \tau} h_e \|\tilde{J}_e(\Gamma)\|_{0,e}^2
\end{eqnarray}
and the oscillation $\widetilde{{\rm osc}}_h(\Gamma,\tau)$ by
\begin{eqnarray}\label{local-oscillation}
\widetilde{{\rm osc}}_h(\Gamma,\tau) =
h_\tau\|\tilde{\mathcal{R}}_{\tau}(\Gamma)-\overline{\tilde{\mathcal{R}}_{\tau}(\Gamma)}\|_{0,\tau}.
\end{eqnarray}
%
Given  $\mathcal {T}' \subset \mathcal {T}_h$, we define the error estimator
$\tilde{\eta}_h(\Gamma,\mathcal {T}')$ and the oscillation  $\widetilde{{\rm osc}}_h(\Gamma,\mathcal {T}')$ by
\begin{eqnarray*}
\tilde{\eta}^2_h(\Gamma, \mathcal{T}') = \sum_{\tau\in \mathcal{T}'}
\tilde{\eta}^2_h(\Gamma, \tau) \quad \textnormal{and} \quad
\widetilde{{\rm osc}}^2_h(\Gamma, \mathcal{T}') = \sum_{\tau\in
\mathcal{T}'}  \widetilde{{\rm osc}}^2_h(\Gamma, \tau),
\end{eqnarray*}
respectively.
We see that a similar a posteriori error estimate to that for Poisson
equation can be expected for (\ref{model-problem}) (c.f. \cite{mekchay-nochetto05,morin-nochetto-siebert02,verfurth96}).
\begin{theorem}\label{upper-bound-theorem}
Let $\Phi \in \mathcal{H}$ be the solution of (\ref{model-weak}) and $\Phi_h \in V_h$ be the solution of (\ref{model-weak-fem}).
Then there exist constants $\tilde{C}_1$, $\tilde{C}_2$ and
$\tilde{C}_3>0$ depending only on  $c_a$ in \eqref{coercive-constant} and $\gamma^{\ast}$ in \eqref{shape-regularity} such that
\begin{eqnarray}\label{boundary-upper}
\|\Phi-\Phi_h \|^2_{1,\Omega}\leq\tilde{C}_1\tilde{\eta}^2_h(\Phi_h,\Omega),
\end{eqnarray}
\vskip -0.6cm
\begin{eqnarray}\label{boundary-lower}
\tilde{C}_2 \tilde{\eta}^2_h (\Phi_h, \Omega)\le
\|\Phi-\Phi_h\|_{1,\Omega}^2+\tilde{C}_3\widetilde{{\rm osc}}^2_h(\Phi_h,\Omega).
\end{eqnarray}
\end{theorem}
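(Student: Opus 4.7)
The plan is to establish the standard reliability and efficiency estimates in the spirit of Verf\"urth, adapted componentwise to the vector-valued setting. Since $a(\cdot,\cdot)$ is just the (rescaled) Dirichlet form and the operator $\mathcal{L}$ decouples across components, the whole argument reduces to summing the $N$ scalar estimates; the coercivity constant $c_a$ in \eqref{coercive-constant} is the only place where the concrete form enters.

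For the upper bound \eqref{boundary-upper}, I would start from coercivity to write $\|\Phi-\Phi_h\|_{1,\Omega}^2 \lesssim a(\Phi-\Phi_h,\Phi-\Phi_h)$ and then, given any $\Gamma\in\mathcal{H}$, exploit Galerkin orthogonality $a(\Phi-\Phi_h,\Gamma_h)=0$ for all $\Gamma_h\in V_h$. Taking $\Gamma_h=I_h\Gamma$ where $I_h$ is the componentwise Scott--Zhang (or Cl\'ement) quasi-interpolant and integrating by parts element by element, I get the representation
\begin{eqnarray*}
a(\Phi-\Phi_h,\Gamma-\Gamma_h)=\sum_{\tau\in\mathcal{T}_h}(\tilde{\mathcal{R}}_\tau(\Phi_h),\Gamma-\Gamma_h)_\tau+\sum_{e\in\mathcal{E}_h}(\tilde{J}_e(\Phi_h),\Gamma-\Gamma_h)_e,
\end{eqnarray*}
which combined with the standard interpolation estimates $\|\Gamma-I_h\Gamma\|_{0,\tau}\lesssim h_\tau\|\Gamma\|_{1,\omega_h(\tau)}$ and $\|\Gamma-I_h\Gamma\|_{0,e}\lesssim h_e^{1/2}\|\Gamma\|_{1,\omega_h(e)}$, Cauchy--Schwarz and the finite overlap of patches yields the claim with $\tilde{C}_1$ depending only on $c_a$ and $\gamma^\ast$.

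For the lower bound \eqref{boundary-lower}, I would use the bubble-function technique componentwise. Fix $\tau\in\mathcal{T}_h$ and let $b_\tau$ be the interior bubble. Testing the strong residual against $v_\tau:=b_\tau\,\overline{\tilde{\mathcal{R}}_\tau(\Phi_h)}$ (extended by zero), using the equivalence $\|b_\tau^{1/2}w\|_{0,\tau}\sim\|w\|_{0,\tau}$ on the finite-dimensional polynomial space, the inverse estimate $\|\nabla v_\tau\|_{0,\tau}\lesssim h_\tau^{-1}\|v_\tau\|_{0,\tau}$, and the identity $a(\Phi-\Phi_h,v_\tau)=(\tilde{\mathcal{R}}_\tau(\Phi_h),v_\tau)_\tau$, I obtain
\begin{eqnarray*}
h_\tau\|\overline{\tilde{\mathcal{R}}_\tau(\Phi_h)}\|_{0,\tau}\lesssim\|\Phi-\Phi_h\|_{1,\tau}+h_\tau\|\tilde{\mathcal{R}}_\tau(\Phi_h)-\overline{\tilde{\mathcal{R}}_\tau(\Phi_h)}\|_{0,\tau},
\end{eqnarray*}
and a triangle inequality controls the full residual norm $h_\tau\|\tilde{\mathcal{R}}_\tau(\Phi_h)\|_{0,\tau}$ up to the oscillation. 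For each interior face $e=\tau_1\cap\tau_2$ I would use the face bubble $b_e$ supported on $\omega_h(e)$, testing against an extension of $b_e\tilde{J}_e(\Phi_h)$; integration by parts on each of $\tau_1,\tau_2$ produces the jump plus volumetric residual contributions, so standard scaling arguments give
\begin{eqnarray*}
h_e^{1/2}\|\tilde{J}_e(\Phi_h)\|_{0,e}\lesssim\|\Phi-\Phi_h\|_{1,\omega_h(e)}+\sum_{\tau\subset\omega_h(e)}h_\tau\|\tilde{\mathcal{R}}_\tau(\Phi_h)\|_{0,\tau}.
\end{eqnarray*}
Squaring, summing over $\tau$ and $e$, exploiting the finite overlap of patches, and substituting the element estimate into the face estimate yields \eqref{boundary-lower}.

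The main obstacle is keeping the bookkeeping clean in the efficiency proof: one must ensure that the hidden constants in the bubble-function manipulations depend only on the shape-regularity $\gamma^\ast$ (via inverse and norm-equivalence constants on reference elements) and on $c_a$ (via the continuity of $a$), not on the polynomial degree of the finite element space beyond a fixed $k$, and that the oscillation term absorbs precisely the data-resolution error and nothing more. Once these scaling details are checked, both bounds follow in the classical fashion, and the vector-valued setting imposes only an extra sum over $i=1,\dots,N$ that is absorbed into the constants.
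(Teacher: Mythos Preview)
Your proposal is correct and follows the classical Verf\"urth approach (Galerkin orthogonality plus quasi-interpolation for reliability, element and face bubble functions for efficiency), which is exactly what the paper has in mind: the paper does not actually supply a proof for this theorem but simply remarks that the estimate follows from the standard argument for the Poisson problem and cites \cite{mekchay-nochetto05,morin-nochetto-siebert02,verfurth96}. Your sketch is precisely the componentwise version of that standard argument, so there is nothing to add.
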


An AFE algorithm for \eqref{model-weak} is designed as follows (c.f. \cite{cascon-kreuzer-nochetto-siebert08}):
\vskip 0.1cm

\begin{algorithm}\label{algorithm-AFEM-bvp}~
\begin{enumerate}
\item Pick a given mesh $\mathcal{T}_0$, and let $k=0$.
\item Solve  \eqref{model-weak-fem} on $\mathcal{T}_k$ to get discrete solution $\Phi_k$.
\item Compute local error indictors $\tilde{\eta}_k(\Phi_k,\tau)$ for all $\tau\in \mathcal{T}_k$.
\item Construct $\mathcal{M}_k \subset \mathcal{T}_k$ by D\"{o}rfler Strategy and parameter
 $\theta$.
\item Refine $\mathcal{T}_k$ to get a new conforming mesh $\mathcal{T}_{k+1}$.
\item Let $k=k+1$ and go to 2.
\end{enumerate}
\end{algorithm}
\vskip 0.1cm

Using the similar arguments to those for scalar linear elliptic boundary
value problem (see, e.g, \cite{cascon-kreuzer-nochetto-siebert08}),
we have the following result for Algorithm \ref{algorithm-AFEM-bvp}.

\begin{theorem}\label{convergence-boundary}
If $\{\Phi_k\}_{k\in \mathbb{N}_0}$ is a sequence of finite element solutions
produced by Algorithm \ref{algorithm-AFEM-bvp}, then there exist constants $\tilde{\gamma}>0$ and  $\tilde{\xi}\in (0,1)$
depending only on the shape regularity $\gamma^{\ast}$ and the marking parameter $\theta$,
such that for any two consecutive iterations
\begin{eqnarray*}\label{convergence-neq}
&&\|\Phi-\Phi_{k+1}\|^2_{1,\Omega} + \tilde{\gamma} \tilde{\eta}^2_{k+1}(\Phi_{k+1},\Omega)
~\leq~ \tilde{\xi}^2 \big( \|\Phi -\Phi_k\|^2_{1,\Omega} +
\tilde{\gamma} \tilde{\eta}^2_k(\Phi_k, \Omega)\big).
\end{eqnarray*}
Indeed, the constant ${\tilde\gamma}$ has the following form
\begin{eqnarray}\label{gamma-boundary}
\tilde{\gamma} = \frac{1}{(1 + \delta^{-1})\tilde{C}^2_{\ast}}
\end{eqnarray}
with $\tilde{C}_{\ast}>0$ depending on the regularity constant $\gamma^{\ast}$ and $\delta\in (0,1)$.
\end{theorem}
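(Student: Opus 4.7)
The plan is to follow the by-now classical framework of Cascon--Kreuzer--Nochetto--Siebert for the AFEM of a linear symmetric coercive boundary value problem, since \eqref{model-weak} is precisely such a problem on $\mathcal{H}$ with energy norm $\|\cdot\|_a = a(\cdot,\cdot)^{1/2}$, which is equivalent to $\|\cdot\|_{1,\Omega}$ by \eqref{coercive-constant}. Three ingredients are needed: (i) Pythagorean identity from Galerkin orthogonality, (ii) an estimator reduction property between two consecutive levels, and (iii) the D\"orfler marking. With these in hand, a suitable linear combination of $\|\Phi-\Phi_k\|_a^2$ and $\tilde\eta_k^2(\Phi_k,\Omega)$ contracts by a fixed factor at each step.

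First I would invoke nesting $V_k \subset V_{k+1}$ and the symmetry of $a(\cdot,\cdot)$ to obtain the orthogonality
\begin{equation*}
\|\Phi-\Phi_{k+1}\|_a^2 = \|\Phi-\Phi_k\|_a^2 - \|\Phi_{k+1}-\Phi_k\|_a^2.
\end{equation*}
Next, using the fact that for any refined element $\tau \in \mathcal{R}_{\mathcal{T}_k \to \mathcal{T}_{k+1}}$ (which contains $\mathcal{M}_k$) the new element sizes are uniformly smaller by a factor depending only on the bisection procedure, together with the triangle inequality and the inverse estimate for $\Phi_{k+1}-\Phi_k \in V_{k+1}$, I would establish the \emph{estimator reduction property}: for every $\delta>0$ there exist $\lambda\in (0,1)$ (depending on the bisection) and $C_\ast>0$ (depending on $\gamma^\ast$) with
\begin{equation*}
\tilde\eta_{k+1}^2(\Phi_{k+1},\Omega) \leq (1+\delta)\Bigl(\tilde\eta_k^2(\Phi_k,\Omega) - \lambda \sum_{\tau\in\mathcal{M}_k}\tilde\eta_k^2(\Phi_k,\tau)\Bigr) + (1+\delta^{-1})C_\ast^2\|\Phi_{k+1}-\Phi_k\|_a^2.
\end{equation*}
The proof of this reduction uses only the explicit form of $\tilde\eta_k$ in \eqref{error-indicator}, the scaling of $h_\tau$, $h_e$ on refined elements, and the inverse estimate; it is essentially identical to the Poisson case since the local residual here is simply $F + \frac12\Delta\Phi_k$.

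With the D\"orfler property $\sum_{\tau\in\mathcal{M}_k}\tilde\eta_k^2(\Phi_k,\tau)\ge \theta\,\tilde\eta_k^2(\Phi_k,\Omega)$, multiplying the estimator reduction by $\tilde\gamma = \bigl((1+\delta^{-1})C_\ast^2\bigr)^{-1}$ (which gives \eqref{gamma-boundary}) and adding it to the orthogonality identity yields
\begin{equation*}
\|\Phi-\Phi_{k+1}\|_a^2 + \tilde\gamma\,\tilde\eta_{k+1}^2(\Phi_{k+1},\Omega) \leq \|\Phi-\Phi_k\|_a^2 + (1+\delta)\tilde\gamma\bigl(1 - \lambda\theta\bigr)\tilde\eta_k^2(\Phi_k,\Omega).
\end{equation*}
Finally, I would apply the upper bound \eqref{boundary-upper} to absorb a small fraction $\beta\in (0,1)$ of $\|\Phi-\Phi_k\|_a^2$ into the estimator term via $\|\Phi-\Phi_k\|_a^2 \le \tilde C_1 \tilde\eta_k^2(\Phi_k,\Omega)$, and then tune $\delta$ small enough that $(1+\delta)(1-\lambda\theta) + \beta\tilde\gamma/\tilde C_1 < 1$, producing the contraction with some $\tilde\xi\in (0,1)$ depending only on $\gamma^\ast$, $c_a$, and $\theta$.

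The main obstacle is the estimator reduction step, because one must carefully distinguish the contribution of refined elements (where $h$ shrinks by a definite factor) from unrefined ones, and control the jump terms across faces shared by elements from different generations. This is the only place where the shape-regularity constant $\gamma^\ast$ and the bisection rule enter quantitatively; once this inequality is in hand, the remaining algebraic combination is routine.
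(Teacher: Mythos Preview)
Your proposal is correct and follows exactly the approach the paper intends: the paper does not give its own proof of this theorem but simply states that it follows by ``the similar arguments to those for scalar linear elliptic boundary value problem (see, e.g., \cite{cascon-kreuzer-nochetto-siebert08}),'' i.e., precisely the CKNS contraction framework you outline. Your three ingredients (Galerkin orthogonality, estimator reduction with the constant $(1+\delta^{-1})C_\ast^2$ producing \eqref{gamma-boundary}, and D\"orfler marking combined with the upper bound) are exactly the standard ones, and your identification of the estimator-reduction step as the only nontrivial ingredient is accurate.
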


For the distance between two nested solutions of (\ref{model-weak-fem}), we have (c.f. \cite{cascon-kreuzer-nochetto-siebert08})
\begin{theorem}\label{localized-upper-bound}
Let $\Phi_{H} \in V_H$ and $ \Phi_{h} \in V_h$ be solutions of (\ref{model-weak-fem}) respectively.
If $\mathcal{T}_{h}$ is a refinement of $\mathcal{T}_{H}$ by marked element $\mathcal{M}_H$ and refined elements $\mathcal{R}=\mathcal{R}_{\mathcal{T}_H\rightarrow \mathcal{T}_h}$, then
\begin{eqnarray*}\label{localized-upper-bound-conc}
\|\Phi_{H} -\Phi_{h} \|^2_{1,\Omega} \leq \tilde{C}_1  \sum_{\tau \in \mathcal{R}} \tilde{\eta}^2_{H}(\Phi_{H},\tau).
\end{eqnarray*}
\end{theorem}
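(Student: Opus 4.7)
\noindent\textbf{Proof proposal for Theorem \ref{localized-upper-bound}.}

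The plan is to exploit Galerkin orthogonality in the nested pair $V_H\subset V_h$ together with a locally conservative quasi-interpolant that preserves finite element functions away from the refined region $\mathcal{R}$. Set $E:=\Phi_h-\Phi_H\in V_h$. Since $a(\Phi-\Phi_h,v)=0$ for all $v\in V_h$, in particular for $v=E$, we obtain
\begin{eqnarray*}
a(E,E)=a(\Phi_h-\Phi_H,E)=a(\Phi-\Phi_H,E).
\end{eqnarray*}
The coercivity \eqref{coercive-constant} then reduces the proof to controlling $a(\Phi-\Phi_H,E)$ by the local estimators over $\mathcal{R}$.

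Next, I would invoke a Scott--Zhang type quasi-interpolant $I_H\colon\mathcal{H}\to V_H$ (applied componentwise) with the two properties used in AFEM theory for linear elliptic problems: (i) $I_H$ is locally stable and satisfies the standard estimates
\begin{eqnarray*}
\|w-I_Hw\|_{0,\tau}\lesssim h_\tau\,\|w\|_{1,\omega_H(\tau)},\qquad
\|w-I_Hw\|_{0,e}\lesssim h_e^{1/2}\,\|w\|_{1,\omega_H(e)};
\end{eqnarray*}
(ii) $I_Hw=w$ on any element $\tau\in\mathcal{T}_H\cap\mathcal{T}_h$ whose patch $\omega_H(\tau)$ lies entirely in $\mathcal{T}_H\cap\mathcal{T}_h$ whenever $w\in V_h$. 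Because $E\in V_h$, property (ii) forces $\mathrm{supp}(E-I_HE)\subset\bigcup_{\tau\in\mathcal{R}}\omega_H(\tau)$. By Galerkin orthogonality $a(\Phi-\Phi_H,I_HE)=0$, so
\begin{eqnarray*}
a(E,E)=a(\Phi-\Phi_H,E-I_HE).
\end{eqnarray*}

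Now I would invoke the residual representation (a componentwise elementwise integration by parts applied to \eqref{model-weak} minus \eqref{model-weak-fem}) to write, for every $\Gamma\in\mathcal{H}$,
\begin{eqnarray*}
a(\Phi-\Phi_H,\Gamma)=\sum_{\tau\in\mathcal{T}_H}(\tilde{\mathcal{R}}_\tau(\Phi_H),\Gamma)_\tau+\sum_{e\in\mathcal{E}_H}(\tilde{J}_e(\Phi_H),\Gamma)_e.
\end{eqnarray*}
Plugging in $\Gamma=E-I_HE$ and using $\mathrm{supp}(E-I_HE)\subset\bigcup_{\tau\in\mathcal{R}}\omega_H(\tau)$ restricts the sums to elements and faces in (or adjacent to) $\mathcal{R}$. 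Applying Cauchy--Schwarz on each element/face, the interpolation bounds of (i), and the finite-overlap property provided by shape regularity \eqref{shape-regularity}, yields
\begin{eqnarray*}
a(\Phi-\Phi_H,E-I_HE)\lesssim\Bigl(\sum_{\tau\in\mathcal{R}}\tilde{\eta}_H^2(\Phi_H,\tau)\Bigr)^{1/2}\|E\|_{1,\Omega}.
\end{eqnarray*}
Combining with $a(E,E)\ge c_a\|E\|_{1,\Omega}^2$ and dividing by $\|E\|_{1,\Omega}$ gives the asserted localized upper bound, with the same constant $\tilde C_1$ as in \eqref{boundary-upper} (only the interpolation and trace constants enter, and these depend solely on $c_a$ and $\gamma^{\ast}$).

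The main obstacle is the construction/verification of the quasi-interpolant satisfying property (ii): one must tailor the Scott--Zhang averaging so that degrees of freedom associated with nodes surrounded entirely by elements of $\mathcal{T}_H\cap\mathcal{T}_h$ reproduce the $V_h$ function exactly, which is where the restriction of the support of $E-I_HE$ to the patches of $\mathcal{R}$ originates. Once this localization is in place, the remaining manipulations are standard residual estimator arithmetic. The only wrinkle specific to the Kohn--Sham context is the vector-valued setting $\mathcal{H}=(H_0^1(\Omega))^N$, but since $a(\cdot,\cdot)$ decouples across components, the scalar argument applies component by component and sums with the norm \eqref{def-norm-abuse}.
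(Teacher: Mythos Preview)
Your argument is correct and is precisely the standard localized upper bound proof of Cascon--Kreuzer--Nochetto--Siebert, which is exactly what the paper cites (without giving its own proof) for this theorem; the only adaptation needed, as you note, is to apply the scalar argument componentwise and sum. The construction of the Scott--Zhang interpolant with the reproduction property on unrefined patches is the key localization step, and your sketch captures it accurately.
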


\small

\end{document}